\let\mathcal\mathscr
\let\bb\mathbb
\newtheorem{theorem}[equation]{Theorem}
 \newtheorem{lemma}[equation]{Lemma}
 \newtheorem{proposition}[equation]{Proposition}
 \newtheorem{corollary}[equation]{Corollary}
\theoremstyle{definition}
\newtheorem{remark}[equation]{Remark}
\theoremstyle{remark}
\newtheorem*{acknowledgments}{Acknowledgments}
\def\invlim{\mathop{\vtop{\ialign{##\crcr$\hfill{\lim}\hfil$\crcr
\noalign{\kern1pt\nointerlineskip}\leftarrowfill\crcr\noalign
{\kern -3pt}}}}\limits}
\def\dirlim{\mathop{\vtop{\ialign{##\crcr$\hfill{\lim}\hfil$\crcr
\noalign{\kern1pt\nointerlineskip}\rightarrowfill\crcr\noalign
{\kern -3pt}}}}\limits} 
\def\lomapr#1{\smash{\mathop{\relbar\joinrel\longrightarrow}\limits^{#1}}}
\def\phi{\varphi}
\def\epsilon{\varepsilon}
\newcommand{\Spa}{\operatorname{Spa}}
\newcommand{\ovk}{\overline{K} }
\newcommand{\dr}{\operatorname{dR} } 
\newcommand{\hk}{\operatorname{HK} }
\newcommand{\gr}{\operatorname{gr} }
\newcommand{\colim}{\operatorname{colim} }
\newcommand{\proeet}{\operatorname{pro\acute{e}t} }
\newcommand{\qproeet}{\operatorname{qpro\acute{e}t} }
\newcommand{\eet}{\operatorname{\acute{e}t} }
\newcommand{\Gal}{\operatorname{Gal} }
\newcommand{\can}{ \operatorname{can} }
\newcommand{\st}{\operatorname{st} }
\newcommand{\kker}{\operatorname{Ker} } 
\newcommand{\coker}{\operatorname{coker} }
\newcommand{\sg}{{\mathcal{G}}}
\newcommand{\so}{{\mathcal O}}
\newcommand{\sd}{{\mathcal{D}}}
\newcommand{\wt}{\widetilde}
\newcommand{\wh}{\widehat}
\numberwithin{equation}{section}
\def\R{{\mathrm R}}
  \def\B{{\bf B}}
\def\Q{{\bf Q}} \def\Z{{\bf Z}}
\def\N{{\bf N}}
\def\bdr{{\bf B}_{{\rm dR}}}
\def\Bdr{{\mathbb B}_{{\rm dR}}}
\def\rg{{\rm R}\Gamma}
\def\epsilon{\varepsilon}
\def\FS{{\mathscr{F}}}
\numberwithin{equation}{section}
\begin{document}
\title[On the $v$-Picard group of Stein spaces]{On the $v$-Picard group of Stein spaces}
\author{Veronika Ertl} 
\address{Instytut Matematyczny PAN, ul. \'Sniadeckich 8, 00-656 Warszawa, Poland }
\email{vertl@impan.pl} 
\author{Sally Gilles}
\address{Institute for Advanced Study, 1 Einstein Drive,  Princeton, NJ 08540, USA }
\email{ gilles@ias.edu}
\author{Wies{\l}awa Nizio{\l}}
\address{CNRS, IMJ-PRG, Sorbonne Universit\'e, 4 place Jussieu, 75005 Paris, France}
\email{wieslawa.niziol@imj-prg.fr}
\date{\today}
\thanks{W.N.'s research was supported in part by the grant NR-19-CE40-0015-02 COLOSS. S.G.'s research was partially supported by the National Science Foundation under Grant No. DMS-1926686. V.E.'s research was supported in part by by the DFG grant SFB 1085 ``Higher Invariants''.  }
\maketitle

 \begin{abstract}
We study the image of the Hodge-Tate logarithm map (in any cohomological degree), defined by Heuer, in the case of smooth Stein varieties. 
Heuer, motivated by the computations for the affine space of any dimension, raised the question whether this  image is always equal to the group of closed differential forms. We show that it indeed always contains such forms  but the quotient  can be non-trivial:
 it contains a slightly mysterious $\Z_p$-module that  maps,  via the Bloch-Kato exponential map, to integral classes in the pro-\'etale cohomology. This quotient is already non-trivial for open unit discs of dimension strictly greater than $1$.
 \end{abstract}
\tableofcontents

\section{Introduction}
Let $\so_K$ be a complete discrete valuation ring with fraction field $K$  of characteristic 0 and with perfect
residue field $k$ of positive characteristic $p$.
Let $C$ be the $p$-adic completion of an algebraic closure of $K$. 

In \cite{Heu21}, Heuer constructed a  {\em Hodge-Tate logarithm} map ${\rm HTlog}$  such that, for any smooth rigid analytic space $X$ over $C$, there is an exact sequence
\begin{equation}
\label{heuer}
 0 \to {\rm Pic}_{\rm an}(X) \to {\rm Pic}_v(X^{\diamond}) \xrightarrow{\rm HTlog} \Omega^1(X)(-1),
\end{equation}
where $X^{\diamond}$ denotes the associated diamond and the $(-1)$ appearing in the last term is the Tate twist. 
Moreover, he proved the following result:
\begin{theorem} {\rm (Heuer, \cite[Th.\,1.3, Th.\,6.1]{Heu21})} \label{morn1}
Let $X$ be a smooth rigid analytic space over $C$. 
\begin{enumerate}
\item If $X$ is proper or a curve, then the map ${\rm HTlog}$ from~\eqref{heuer} is surjective. 
\item If $X$ is the affine space ${\mathbb A}^d_C$ of dimension $d$, then the image of ${\rm HTlog}$ is equal to the kernel of the differential, i.e., we have an exact sequence: 
\begin{equation}\label{batak1}
 0 \to {\rm Pic}_{\rm an}(X) \to {\rm Pic}_v(X^{\diamond}) \xrightarrow{\rm HTlog} \Omega^1(X)^{d=0}(-1) \to 0.  
\end{equation}
\end{enumerate}
\end{theorem}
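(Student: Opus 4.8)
The plan is to realize ${\rm HTlog}$ as a boundary map in $p$-adic Hodge theory and to reduce everything to the behaviour of the de Rham differential on global forms. Write $\nu\colon X_v\to X_{\an}$ for the projection to the analytic site. Heuer's construction of~\eqref{heuer} identifies ${\rm HTlog}$ with the composite of the $p$-adic logarithm ${\rm Pic}_v(X^{\diamond})=H^1_v(X,\wh{\OC}^{\times})\to H^1_v(X,\wh{\OC})$ with the edge map $H^1_v(X,\wh{\OC})\to H^0(X,R^1\nu_*\wh{\OC})\cong\Omega^1(X)(-1)$, using the Hodge--Tate isomorphism $R^1\nu_*\wh{\OC}\cong\Omega^1_X(-1)$ together with $\nu_*\wh{\OC}^{\times}=\OC_X^{\times}$. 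The five-term exact sequence of the Leray spectral sequence then recovers $\ker({\rm HTlog})={\rm Pic}_{\rm an}(X)$ and shows that the image is controlled by two obstructions: the additive differential $d_2\colon\Omega^1(X)(-1)\to H^2(X,\OC_X)$, and the failure of the logarithm to be surjective, measured by a boundary map $\beta\colon H^1_v(X,\wh{\OC})\to H^2_{\proeet}(X,\Q_p(1))$ coming from the $p$-adic exponential sequence relating $\wh{\OC}$ and $\Q_p(1)$. Thus $\im({\rm HTlog})$ consists of those $\omega\in\Omega^1(X)(-1)$ admitting a lift to $\ker\beta$, and both parts of the theorem reduce to analysing $\beta$ on forms.

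The central step, uniform across all three cases, is the lemma that every closed form lies in the image: for $\omega\in\Omega^1(X)^{d=0}(-1)$, closedness makes the associated additive class de Rham--trivial, and I would produce an explicit multiplicative lift by exponentiating along the contracting homotopy furnished by the Poincar\'e lemma on the relevant affinoid charts, checking that the resulting $v$-cocycle glues to a genuine $v$-line bundle. This is where the analytic input really enters, and it is the hard part of the argument.

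Granting this lemma, the three cases are dispatched as follows. If $X$ is proper, the degeneration of the Hodge--de Rham spectral sequence forces $d\colon H^0(X,\Omega^1)\to H^0(X,\Omega^2)$ to vanish, so every global $1$-form is closed; hence $\Omega^1(X)=\Omega^1(X)^{d=0}$ and the lemma gives surjectivity (equivalently, one invokes the degeneration of the Hodge--Tate spectral sequence to see $d_2=0$ and $\beta=0$). If $X$ is a smooth curve then $\Omega^2_X=0$, so again every $1$-form is trivially closed and the lemma applies. Finally, if $X={\mathbb A}^d_C$, I must also show that no non-closed form lies in the image. For this I would compute $H^2_{\proeet}({\mathbb A}^d_C,\Q_p(1))$ by syntomic descent, identifying its Hyodo--Kato and de Rham pieces: since ${\mathbb A}^d_C$ is Stein with vanishing de Rham cohomology, the edge map $H^1_v(X,\wh{\OC})\to\Omega^1(X)(-1)$ is an isomorphism and the restriction of $\beta$ to forms is exactly the de Rham differential $d\colon\Omega^1(X)(-1)\to\Omega^2(X)(-1)$ followed by an \emph{injection} into pro-\'etale cohomology, whence $\omega\in\im({\rm HTlog})$ forces $d\omega=0$.

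The main obstacle is precisely this injectivity: showing that for the affine space the pro-\'etale $H^2$ faithfully detects $d\omega$, i.e. that the ``mysterious'' $\Z_p$-module of non-closed forms in the image vanishes. This rests on the explicit computation of the integral pro-\'etale cohomology of ${\mathbb A}^d_C$ and the behaviour of the Bloch--Kato exponential there, and it is exactly the point at which the affine space differs from the open unit disc of dimension $>1$, where that module is non-zero and the image strictly exceeds the closed forms.
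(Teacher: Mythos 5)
Your skeleton (Leray spectral sequence for $\nu$, kernel equal to ${\rm Pic}_{\rm an}(X)$, image controlled by a boundary map into $p$-adic pro-\'etale cohomology) matches the paper's, but the two load-bearing steps do not hold as written. First, your ``central lemma'' --- every closed form lies in the image --- is precisely the hard content (it is Theorem \ref{image00} of this paper, restricted to closed forms), and the proposed proof by ``exponentiating along the contracting homotopy furnished by the Poincar\'e lemma'' cannot work: the $p$-adic exponential is a morphism of sheaves only on $p'\so^+$, so there is no way to exponentiate such a homotopy into a $v$-cocycle valued in $U$ or $\so^{\times}$; no explicit cocycle construction is known. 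The actual argument is not a cocycle construction at all: one identifies the obstruction $\partial_{\rm log}\circ{\rm HT}^{-1}$ with ${\rm Exp}$ followed by reduction to $\Q_p/\Z_p(1)$-cohomology, via the $\mathbb{B}_{\dr}$-Poincar\'e lemma (Lemma \ref{sniad3}, Proposition \ref{ajin}), and then invokes the fundamental diagram \eqref{pStein}, i.e.\ syntomic/Hyodo--Kato input that appears nowhere in your sketch. Moreover that machinery is a Stein statement, so your treatment of part (1) is circular for proper $X$: asserting ``$\beta=0$ by degeneration of the Hodge--Tate spectral sequence'' is essentially the theorem itself (degeneration gives surjectivity of the additive edge map $H^1_v(X,\so)\to\Omega^1(X)(-1)$, not liftability of additive classes to multiplicative ones); Heuer's proper case is a genuinely different argument. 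For curves both Heuer and this paper argue directly: $H^2_{\eet}(X,\so^{\times})=0$ kills the Leray obstruction in the five-term sequence, with no closed-forms lemma needed --- and note that your Leray differential should land in $H^2_{\eet}(X,\so^{\times})$, not in $H^2(X,\so_X)$, which vanishes for Stein $X$.

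Second, your obstruction $\beta$ has the wrong target, which makes the framework inconsistent with your own final paragraph. The sequence governing lifting of line-bundle classes through the logarithm is $0\to\Q_p/\Z_p(1)\to U\to\so\to0$, so the obstruction lives in $H^2_{\proeet}(X,\Q_p/\Z_p(1))$; the $\Q_p(1)$-valued boundary you write down (from $0\to\Q_p(1)\to\mathbb{B}^{\varphi=p}\to\so\to0$) has kernel exactly $\Omega^1(X)^{d=0}(-1)$ whenever ${\rm Exp}$ is injective, by Proposition \ref{ajin}(2). If ${\rm Im}({\rm HTlog})$ were the kernel of that map, the image would equal the closed forms for every smooth Stein space, contradicting the open-disc example you invoke at the end; the discrepancy between the two kernels is exactly $\mathcal{I}^1(X)={\rm Im}({\rm Exp})\cap{\rm Im}(\iota)$, which is the whole point of this paper. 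For ${\bb A}^d_C$ the gap is closed by proving $H^2_{\proeet}({\bb A}^d_C,\Z_p(1))=0$ (you gesture at this), but one must in addition show that every class in ${\rm Pic}_v({\bb A}^d_C)$ comes from $H^1_v({\bb A}^d_C,U)$, equivalently $H^1_v({\bb A}^d_C,\overline{\so}^{\times})=0$ (\cite[Lem.\,6.7]{Heu21}; Proposition \ref{o-bar-aff} here), since all of the $p$-adic analysis takes place on $U$-classes; this step is absent from your proposal.
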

Note that, in the case of the affine space, the analytic Picard group is trivial and the exact sequence~\eqref{batak1} gives in fact an isomorphism: 
\[ {\rm Pic}_v(X^{\diamond}) \xrightarrow[\sim]{\rm HTlog} \Omega^1(X)^{d=0}(-1). \] 

Heuer has also raised the question (see \cite[Rem.\,6.10]{Heu21}) whether we have an analogue of the exact sequence \eqref{batak1} for any smooth Stein space over $C$, i.e., whether the image of ${\rm HTlog}$ is equal to the closed differential forms. 
Using  a  simple functoriality argument he has shown that, for all smooth rigid analytic spaces,  this image contains all exact forms.

The goal of this paper is to extend Theorem \ref{morn1} and to compute the image of the Hodge-Tate logarithm for more general Stein rigid analytic spaces. More precisely, we show the following: 

\begin{theorem}
\label{main}
Let $X$ be a smooth Stein rigid analytic space over $C$ and let $i\geq 1$. Then, the image of the restriction of the Hodge-Tate logarithm to the cohomology group of principal units 
\[ {\rm HTlog}_U : H^i_v(X^{\diamond},U) \to \Omega^i_{X}(X)(-i) \]
fits into a short exact sequence of abelian groups
\begin{equation}
\label{image0}
0 \to  \Omega^i(X)(-i)^{d=0} \to {\rm Im}({\rm HTlog}_U) \xrightarrow{ {\rm Exp}}  \mathcal{I}^i(X) \to 0 
\end{equation}
where the $\Z_p$-module $\mathcal{I}^i(X) \subset H^{i+1}_{\proeet}(X, \Q_p(1))$ is the intersection ${\rm Im}({\rm Exp}) \cap {\rm Im}(\iota)$, where
$\iota: H^{i+1}_{\proeet}(X, \Z_p(1)) \to H^{i+1}_{\proeet}(X, \Q_p(1))$ is the canonical map. 
\end{theorem}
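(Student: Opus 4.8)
The plan is to realise both maps in the statement as connecting or induced maps in the long exact cohomology sequences of two short exact sequences of $v$-sheaves on $X^\diamond$, and then to compare those two sequences through a single morphism of short exact sequences. The first is the logarithm sequence underlying the definition of ${\rm HTlog}_U$,
$$0 \to \mu_{p^\infty} \to U \xrightarrow{\log} \widehat{\mathcal O} \to 0,$$
with connecting map $\delta\colon H^i_v(X^\diamond,\widehat{\mathcal O}) \to H^{i+1}_v(X^\diamond,\mu_{p^\infty})$. Since $X$ is Stein, the Hodge--Tate comparison $R\nu_*\widehat{\mathcal O} \cong \bigoplus_j \Omega^j_X(-j)[-j]$ together with the vanishing of higher coherent cohomology gives $H^i_v(X^\diamond,\widehat{\mathcal O}) = \Omega^i(X)(-i)$ (only the $j=i$, degree-$0$ summand survives), so that ${\rm HTlog}_U$ is exactly the map induced by $\log$ and, by exactness, ${\rm Im}({\rm HTlog}_U) = \ker(\delta)$. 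The second is the $v$-sheaf form of Fontaine's fundamental exact sequence
$$0 \to \Q_p(1) \to P \to \widehat{\mathcal O} \to 0,$$
whose connecting map $H^i_v(X^\diamond,\widehat{\mathcal O}) \to H^{i+1}_v(X^\diamond,\Q_p(1)) = H^{i+1}_{\proeet}(X,\Q_p(1))$ I take as the definition of ${\rm Exp}$, the $v$- and pro-\'etale cohomologies agreeing for these coefficients.

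Next I would identify $\ker({\rm Exp})$ with the closed forms. From the long exact sequence of the second row, $\ker({\rm Exp})$ is the image of $H^i_v(X^\diamond,P) \to H^i_v(X^\diamond,\widehat{\mathcal O}) = \Omega^i(X)(-i)$, and I claim this image is precisely $\Omega^i(X)^{d=0}(-i)$. This is where the Stein hypothesis does real work: using the fundamental exact sequences for the pro-\'etale cohomology of Stein spaces (the Hyodo--Kato and de Rham descriptions, with higher coherent cohomology vanishing and the relevant spectral sequences degenerating), the comparison from the $\varphi=p$ part to Hodge--Tate cohomology has image exactly the kernel of the differential. In particular all closed forms lie in ${\rm Im}({\rm HTlog}_U)$, which upgrades Heuer's observation that the exact forms do.

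The crux is the comparison of the two sequences. I would construct a morphism of short exact sequences of $v$-sheaves
$$
\begin{CD}
0 @>>> \Q_p(1) @>>> P @>>> \widehat{\mathcal O} @>>> 0\\
@. @VV{\pi}V @VV{e}V @| \\
0 @>>> \mu_{p^\infty} @>>> U @>{\log}>> \widehat{\mathcal O} @>>> 0,
\end{CD}
$$
where $\pi\colon \Q_p(1) \twoheadrightarrow \Q_p/\Z_p(1) = \mu_{p^\infty}$ is the reduction and $e$ is an exponential lift inducing the identity on $\widehat{\mathcal O}$. Naturality of connecting maps then yields $\delta = \beta\circ{\rm Exp}$, where $\beta = \pi_*\colon H^{i+1}_{\proeet}(X,\Q_p(1)) \to H^{i+1}_{\proeet}(X,\Q_p/\Z_p(1))$ is the reduction map. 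In the long exact sequence of $0\to\Z_p(1)\to\Q_p(1)\to\Q_p/\Z_p(1)\to 0$ one has $\ker(\beta) = {\rm Im}(\iota)$, hence
$$ {\rm Im}({\rm HTlog}_U) = \ker(\delta) = \ker(\beta\circ{\rm Exp}) = {\rm Exp}^{-1}({\rm Im}(\iota)). $$
Applying ${\rm Exp}$ and combining with the previous step gives the short exact sequence of the theorem: the kernel is $\ker({\rm Exp}) = \Omega^i(X)^{d=0}(-i)$ (which is contained in ${\rm Im}({\rm HTlog}_U)$ since $0\in{\rm Im}(\iota)$), and the image is ${\rm Exp}({\rm Exp}^{-1}({\rm Im}(\iota))) = {\rm Im}({\rm Exp})\cap{\rm Im}(\iota) = \mathcal{I}^i(X)$.

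I expect the construction and verification of the middle vertical map $e\colon P \to U$ — an exponential of period sheaves landing in principal units and restricting to $\pi$ on $\Q_p(1)$ — to be the main obstacle, since it requires controlling convergence on the $v$-site and checking the compatibility of Fontaine's fundamental sequence with the integral logarithm sequence. The second serious point is the identification of $\ker({\rm Exp})$ with the closed forms, which rests on the degeneration afforded by the Stein condition; everything else is a diagram chase together with the $v$-versus-pro-\'etale comparison for $\Q_p(1)$, $\Z_p(1)$ and $\Q_p/\Z_p(1)$ coefficients.
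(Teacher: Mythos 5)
Your overall architecture coincides with the paper's own proof: the same logarithm sequence identifying ${\rm Im}({\rm HTlog}_U)$ with the kernel of the connecting map $\delta$ (using that Stein vanishing makes $H^i_v(X,\so)\simeq\Omega^i(X)(-i)$), the same morphism of short exact sequences from Fontaine's sequence $0 \to \Q_p(1) \to \mathbb{B}^{\varphi=p} \to \mathbb{B}^+_{\rm dR}/t \to 0$ to the logarithm sequence, and the same final chase using $\ker\bigl(H^{i+1}_{\proeet}(X,\Q_p(1))\to H^{i+1}_{\proeet}(X,\Q_p/\Z_p(1))\bigr)={\rm Im}(\iota)$. One remark on the point you expect to be the main obstacle: the middle vertical map $e$ needs no convergence argument and is not an exponential. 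The sheaf $\mathbb{B}^{\varphi=p}$ is identified with $U^{\flat}=\lim_{x\mapsto x^p}U$ (the universal cover of the multiplicative $p$-divisible group), and $e$ is simply the sharp/projection map $x\mapsto x^{\sharp}$; the compatibility of the two sequences is then Le Bras's computation, which the paper cites (Proposition~\ref{ajin} is not needed for this square, only \cite[Ex.\,2.22]{LB18}).

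The genuine gap is in how you handle ${\rm Exp}$. You take the connecting map $\partial_{\rm BdR}$ of the fundamental sequence \emph{as the definition} of ${\rm Exp}$, but the theorem is a statement about a specific pre-existing map: the Bloch--Kato exponential of Colmez--Dospinescu--Nizio{\l}/Bosco, defined on $(\Omega^i(X)/{\rm Ker}\,d)(-i)$, injective, and sitting in the fundamental diagram~\eqref{pStein}; the group $\mathcal{I}^i(X)={\rm Im}({\rm Exp})\cap{\rm Im}(\iota)$ is defined through that map. With your definition you prove a statement about a possibly different map and hence a possibly different group, unless you show the two coincide --- and that identification is precisely the content of the paper's Section~\ref{compatibility} (Proposition~\ref{ajin}), which unwinds Bosco's construction of ${\rm Exp}$ via the filtered $\mathbb{B}_{\rm dR}$-Poincar\'e lemma to prove $\partial_{\rm BdR}={\rm Exp}(-i)\circ{\rm can}\circ{\rm PL}$. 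Relatedly, your second step ($\ker = \Omega^i(X)^{d=0}(-i)$, i.e.\ the image of $H^i(X,\mathbb{B}^{\varphi=p})\to\Omega^i(X)(-i)$ is exactly the closed forms) does not follow from ``degeneration of the relevant spectral sequences'' for Stein spaces: the inclusion of the closed forms is the easy half, but the reverse inclusion is equivalent to the injectivity of the Bloch--Kato exponential on $\Omega^i/{\rm Ker}\,d$, which the paper deduces from the exactness of the CDN fundamental diagram and ultimately from slope properties of Hyodo--Kato cohomology. This is exactly the point where Heuer's affine-space argument (via Le Bras's computation of $H^i$ of $\mathbb{B}^{\varphi=p}$) fails to generalize to arbitrary Stein spaces, and it is the reason the paper must invoke the CDN/Bosco machinery rather than argue directly with the connecting map. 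So both of the ``serious points'' you flag are real, but the second is a comparison-and-injectivity statement, not a degeneration statement, and as written your proposal asserts rather than proves it.
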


The map ${\rm Exp}$ in \eqref{image0}  is the {\em Bloch-Kato exponential} map 
\begin{equation}
\label{CDN}
 {\rm Exp} : (\Omega^i(X)/{\rm Ker}\,d)(-i) \hookrightarrow H^{i+1}_{\proeet}(X, \Q_p(1)) \end{equation}
from  the fundamental diagram of Colmez-Dospinescu-Nizio{\l} computing the $p$-adic pro-\'etale cohomology of Stein spaces \cite{CDN3}, \cite{CN4}. 

In particular, the above result shows that the closed differential forms are all in the image of the  morphism ${\rm HTlog}$. In the cases where the group $\mathcal{I}^i(X)$ is non-trivial, it also shows that this  image is larger than what the result for the affine space was suggesting, since it also includes the differential forms coming from the integral pro-\'etale cohomology. We show that 
group $\mathcal{I}^i(X)$ is non-trivial already in  the case of  the unit open disc of dimension at least $2$. 

  For Stein spaces which in addition  are almost proper varieties or analytifications of algebraic varieties we show that the group $\mathcal{I}^i(X)$ is trivial, for all $i$.  Moreover, if  the image of ${\rm HTlog}_U$ coincides with the one of ${\rm HTlog}$, we obtain the  exact sequence:
  $$
   0 \to {\rm Pic}_{\rm an}(X) \to {\rm Pic}_v(X^{\diamond}) \xrightarrow{\rm HTlog} \Omega^1(X)^{d=0}(-1) \to 0.  
$$  This suggests that $v$-line bundles are a twisted form of closed Higgs line bundles in this case.  Such results are  related to the $p$-adic Simpson correspondence studied by Faltings in~\cite{Fal05} and Deninger-Werner in~\cite{DW05}. In~\cite{Heu21}, Heuer used the first point of Theorem~\ref{morn1} to construct a $p$-adic Simpson correspondence in rank~1 (see Theorem~1.7 in {\em loc.\,cit.}). He later extended his result to higher ranks in~\cite{Heu23}. Our result suggests that a similar correspondence could be true for Stein almost proper varieties or Stein analytifications of algebraic varieties. 

\vskip.2cm
 
To prove Theorem~\ref{main}, the strategy we follow is similar to the one of Heuer in the case of the affine space (see \cite[Sec.\,6.2]{Heu21}): we compare the image of the Hodge-Tate logarithm to the kernel of the map 
from $\Omega^i(X)(-i)$ to the pro-\'etale cohomology. In the work of Heuer, this map is defined as the boundary morphism 
coming from the fundamental exact sequence of $p$-adic Hodge Theory: 
\[ 0 \to \Q_p(1) \to \mathbb{B}^{\varphi=p} \to  \mathbb{B}^+_{\rm dR}/t \simeq \so \to 0\] 
In the case of the affine space, Heuer was able to compute this kernel using the computation of the cohomologies of $\mathbb{B}^{\varphi=p}$ and  $\mathbb{B}_{\dr}$ by Le Bras in \cite{LB18} and the Poincar\'e Lemma of Scholze  \cite{Sch13a}. However, for a general Stein space, this is not sufficient: to determine completely the kernel we need to use the Bloch-Kato exponential~\eqref{CDN} from Colmez-Dospinescu-Nizio{\l} (in the form defined by Bosco in \cite{Bos23}); the fact that this map is injective follows from  slope properties of   Hyodo-Kato cohomology of $X$. The equality between the map ${\rm Exp}$ and the one used by Heuer is checked in Section~\ref{compatibility}.  

We present in Section~\ref{examples} a number of examples: a torus,  a Drinfeld space, analytifications of algebraic varieties, an open disc, where we try to determine the $v$-Picard group. Having Theorem~\ref{main}, the main difficulty is in proving that the Picard group can be computed using the sheaf of principal units. We end the paper with a discussion of  the groups $H_{\tau}^i(X, {\bb G}_m)$, $i\ge 2$ and $\tau \in \{ \eet, \proeet, v\}$, where $X$ is a rigid analytic Stein space: similar tools as those used  in the study of the Picard group allow us to partially compute such groups. In particular, in the case of the affine space ${\bb A}_C^n$, we obtain that, for $i \ge 2$,  
\[ H_{\eet}^i({\bb A}_C^n,  {\bb G}_m) \simeq \Omega^{i}({\bb A}_C^n)^{d=0}(-i+1) \quad \text{and} \quad H_{\proeet}^i({\bb A}_C^n,  {\bb G}_m) \simeq H_v^i({\bb A}_C^n,  {\bb G}_m) \simeq \Omega^{i}({\bb A}_C^n)^{d=0}(-i),\]
and the canonical map $H_{\eet}^i({\bb A}_C^n,  {\bb G}_m)\to H_{\proeet}^i({\bb A}_C^n,  {\bb G}_m)$ is the zero map. This gives a non-trivial example of a rigid analytic variety for   which one can compute explicitly the Brauer group.
 
\begin{acknowledgments}This paper is a product of a collaboration started during WINE4 (Women in Numbers Europe -- 4), in the summer of 2022 at Utrecht. We would like to thank the organisers for giving us the opportunity to work together in such a great place !  Special thanks go to Annette Werner who co-organised with W.N.\ the research group  on ``Relative $p$-adic Hodge Theory'' as well as to members of the group for many discussions on  $p$-adic geometry. We would also like to thank Piotr Achinger, Pierre Colmez, Gabriel Dospinescu, Ben Heuer, Damien Junger and Peter Scholze for helpful comments  concerning the content of this paper. Special thanks go to the referee for a very careful reading of the original manuscript and for many helpful comments and suggestions that have substantiailly improved the paper. 

 S.G.\ would like to thank the MPIM of Bonn and the IAS of Princeton for their support and hospitality during the academic years 2022-2023 and 2023-2024 when parts of this paper were written.  
V.E.\ would like to thank the IMPAN in Warsaw for its support and hospitality during the academic year 2023-2024. 
\end{acknowledgments}

\subsubsection*{Notation and conventions.}\label{Notation}
Let $\so_K$ be a complete discrete valuation ring with fraction field $K$  of characteristic 0 and with perfect
residue field $k$ of characteristic $p$. Let $\ovk$ be an algebraic closure of $K$ and let $\so_{\ovk}$ denote the integral closure of $\so_K$ in $\ovk$. Let $C=\wh{\ovk}$ be the $p$-adic completion of $\ovk$.  Let
$W(k)$ be the ring of Witt vectors of $k$ with 
 fraction field $F$ (i.e., $W(k)=\so_F$). We denote by $\breve{F}$ the completion of the maximal unramified extension of $K$. Set $\sg_K=\Gal(\overline {K}/K)$ and 
let $\phi$ be the absolute
Frobenius on $W(\overline {k})$. 
We will denote by $ \wh{\B}_{\st},\B_{\dr}$ the  semistable and  de Rham period rings of Fontaine. 

All rigid analytic spaces considered will be over $K$ or $C$. 
We assume that they are separated, taut\footnote{That is,   for  all quasi-compact  opens $V$ of  $X$,  the closure $\overline{V}$ of  $V$ in  $X$ is quasi-compact.}, and countable at infinity. We say that a rigid analytic space $X$ is Stein if it has an admissible affinoid covering $X = \bigcup_{i \in \N}U_i$ such that $U_i \Subset U_{i+1}$,
i.e., the inclusion $U_i \subset U_{i+1}$ factors through the adic compactification of $U_i$. Recall that, for such spaces, coherent sheaves are acyclic.

\section{Preliminaries}
\label{prelim}

We gather here the basic facts needed later on in the paper. 

\subsection{Vector bundles in the $v$-topology}
We gather here a few facts about $v$-vector bundles. 

Recall that the $v$-topology on a perfectoid space $X$ is defined as the topology whose covers are generated by all open covers (in the analytic topology) and by all the surjective maps of affinoids (see \cite[Lecture~17]{SW20}). We have that all diamonds are $v$-sheaves.  
If $Y$ is a diamond over ${\rm Spd}(C)$, a $v$-sheaf $V$ is a $v$-vector bundle of rank $n$, $n\in \N$, on $Y$ if it is a ${\rm GL}_n^{\diamond}$-torsor for the $v$-topology (where ${\rm GL}_n^{\diamond}$ denotes the diamond associated to the usual rigid space ${\rm GL}_n$). If $q: X \to Y$ is a $v$-cover of diamonds and $V$ a vector bundle on $X$ then every descent datum on $V$ is effective, i.e. the descent datum comes from a $v$-vector bundle on $Y$ (see \cite[Def.\,2.5 and Lem.\,2.6]{Heu21}). In particular, the $v$-vector bundles of rank $n$ on a diamond $Y$ (up to isomorphism) are classified by $H^1_v(Y, {\rm GL}^{\diamond}_n)$. In this paper, we are interested in  the group of line bundles: 
\[ {\rm Pic}_v(Y):= H^1_v(Y,{\rm GL}^{\diamond}_1). \]

 The diamond $Y$ can also be equipped with the \'etale topology and the quasi-pro-\'etale topology (see \cite[Sec.\,9.2]{SW20}). If $Y$ comes from a rigid space $X$ (i.e. $Y=X^{\diamond}$), then we have an equivalence $Y_{\eet}\simeq X_{\eet}$ (\cite[Th.\,10.4.2]{SW20}) and we have the following inclusion of sites: 
\[ X_{\rm an} \subset X_{\eet} \simeq X^{\diamond}_{\eet} \subset X_{\proeet} \subset X^{\diamond}_{\qproeet} \subset X^{\diamond}_v. \]
If $X$ is an affinoid perfectoid, we know from a result of Kedlaya-Liu \cite[Th.\,3.5.8]{KL16} that the notions of vector bundles in all these topologies coincide. The pro-\'etale, quasi-pro-\'etale and $v$-topologies being locally affinoid perfectoid, it follows that for a general smooth rigid space $X$, we also have that the groups of vector bundles in these three topologies are equal. It is also known (\cite[Prop.\,8.2.3]{FVdP}) that ${\rm Pic}_{\rm an}(X) \simeq {\rm Pic}_{\eet}(X)$. We are left to study the map 
\begin{equation}
\label{et-v}
{\rm Pic}_{\eet}(X) \hookrightarrow {\rm Pic}_{v}(X^{\diamond}). \end{equation}   
 
\subsection{Topologies on $X$}

Let $X$ be a smooth rigid space over $C$ and $X^{\diamond}$ be the associated diamond. In the following we write $X_v$ for the site $X_v^{\diamond}$. 
We denote by $\so_{\eet}$ and $\so_{v}$ the structure sheaves for the \'etale and the $v$-topology, and by $\so_{\proeet}$ the \textit{completed} pro-\'etale structure sheaf.
For $\tau$ one of these topologies, we also denote by  $\so^{\times}_{\tau}$ the sheaf of invertible functions, 
by $\so^{+}_{\tau}$  the sheaf of integral elements and 
by $U_{\tau}:=1+ \mathfrak{m}_{C}\so^+_{\tau} \subset \so_{\tau}^{\times}$  the sheaf of principal units. 
Let $\overline{\so}^{\times}$ be the quotient of $\so_{\tau}^{\times}$ by $U_{\tau}$. For $\mathcal{G} \in \{ \so_{\tau}^{\times}, \overline{\so}_{\tau}^{\times} \}$, we write $\mathcal{G}[\tfrac{1}{p}]$ for the sheaf of abelian groups $\varinjlim_{x \mapsto x^p} \mathcal{G}$. If $X$ is quasi-compact then 
 $\so_{\tau}^{\times}[\tfrac{1}{p}](X)= \so_{\tau}^{\times}(X)[\tfrac{1}{p}]$.  By \cite[Lem.\,2.16]{Heu21}, we have $\overline{\so}_{\tau}^{\times}[\tfrac{1}{p}]\simeq \overline{\so}_{\tau}^{\times}$. 
 
We summarise in the following proposition the various equalities that we have between the $H^1$-groups of these sheaves: 

\begin{proposition}
\label{topo} We denote by $\nu : X_v \to X_{\eet}$ the canonical morphism. It decomposes as $X_v \xrightarrow{\lambda} X_{\proeet} \xrightarrow{\mu} X_{\eet}$. 
\begin{enumerate}
\item The sheaf $\so^{\times}$: we can interchange pro-\'etale and $v$-topologies: 
\begin{align*}
H^1_{\proeet}(X, \so^{\times}) \stackrel{\sim}{\to} H^1_{v}(X, \so^{\times}).  
\end{align*}
\item The sheaf $\overline{\so}_v^{\times}$: we can interchange all three topologies: 
\begin{align*}
&\lambda_*\overline{\so}_v^{\times}\simeq \overline{\so}_{\proeet}^{\times}, \quad \R^1\lambda_*\overline{\so}^{\times}_v=0, \quad \text{ and in particular, } \quad  H^1_{\proeet}(X, \overline{\so}^{\times}) \stackrel{\sim}{\to} H^1_{v}(X, \overline{\so}^{\times})  \\
&\mu_*\overline{\so}_{\proeet}^{\times}\simeq \overline{\so}_{\eet}^{\times}, \quad \R^1\mu_*\overline{\so}^{\times}_{\proeet}=0, \quad \text{ and in particular, } \quad  H^1_{\eet}(X, \overline{\so}^{\times}) \stackrel{\sim}{\to} H^1_{\proeet}(X, \overline{\so}^{\times}) \\
&\nu_*\overline{\so}_{v}^{\times}\simeq \overline{\so}_{\eet}^{\times}, \quad \R^1\nu_*\overline{\so}^{\times}_v=0, \quad \text{ and in particular, } \quad H^1_{\eet}(X, \overline{\so}^{\times}) \stackrel{\sim}{\to} H^1_{v}(X, \overline{\so}^{\times}) .
\end{align*}
 \item More generally, we have natural isomorphisms 
  \begin{equation}\label{even2}
  \R\nu_*\overline{\so}^{\times}=\overline{\so}^{\times},\quad \R\mu_*\overline{\so}^{\times}=\overline{\so}^{\times}.
  \end{equation}
\end{enumerate}
\end{proposition}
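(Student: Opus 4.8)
The plan is to treat the three parts in order of difficulty: (1) is essentially formal, the comparisons for $\overline{\so}^{\times}$ in (2) are low-degree shadows of the structural computation (3), and (3) carries the real content. For (1), by definition $H^1_{\tau}(X,\so^{\times})={\rm Pic}_{\tau}(X)$ for $\tau\in\{\proeet,v\}$, and the Kedlaya--Liu theorem recalled above (\cite[Th.\,3.5.8]{KL16}) shows that line bundles in the pro-\'etale and the $v$-topology agree over a basis of affinoid perfectoids; sheafifying this agreement and taking $H^1$ yields the asserted isomorphism.

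The engine for (3) is the short exact sequence of $\tau$-sheaves
\begin{equation}\label{starseq}
1\to U_{\tau}\to\so^{\times}_{\tau}\to\overline{\so}^{\times}_{\tau}\to 1,\qquad\tau\in\{\eet,\proeet,v\}.
\end{equation}
Writing $\R^i\nu_*\overline{\so}^{\times}$ (resp. $\R^i\mu_*$) as the sheafification on $X_{\eet}$ of $V\mapsto H^i_v(V,\overline{\so}^{\times})$ (resp. $H^i_{\proeet}$), I would apply $\R\nu_*$ and $\R\mu_*$ to \eqref{starseq}. The crux is to prove that the inclusion $U\hookrightarrow\so^{\times}$ induces an \emph{isomorphism} on the higher direct images $\R^{i}\nu_*$ and $\R^{i}\mu_*$ for all $i\ge 1$. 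Granting this, the long exact sequence forces $\R^{\ge1}\nu_*\overline{\so}^{\times}=\R^{\ge1}\mu_*\overline{\so}^{\times}=0$, while in degree $0$ (using injectivity of $\R^1\nu_*U\to\R^1\nu_*\so^{\times}$) it gives $\nu_*\overline{\so}^{\times}_v=\so^{\times}_{\eet}/U_{\eet}=\overline{\so}^{\times}_{\eet}$ and similarly for $\mu$, which is exactly \eqref{even2}. To establish the isomorphism on higher direct images I would identify both sides with the Hodge--Tate sheaves $\Omega^i(-i)$: on the $\so^{\times}$ side via $d\log$/Kummer theory and Scholze's computation of $\R\nu_*\so^+$ and $\R\nu_*\so$ on affinoid perfectoids (\cite{Sch13a}), and on the $U$ side via the $p$-adic logarithm $\log\colon U\to\so^+$, which turns $U$ into a piece of $\mathfrak{m}_C\so^+$ up to the $p$-power torsion coming from $\mu_{p^\infty}$. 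The two descriptions are matched by the compatibility $d\log=d\circ\log$, so the comparison map is an isomorphism; Heuer's Lemma (\cite[Lem.\,2.16]{Heu21}), stating that $\overline{\so}^{\times}$ is uniquely $p$-divisible, is what lets me discard the roots-of-unity torsion and upgrade Scholze's almost-isomorphisms to honest ones.

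The statements in (2) then follow formally. The $\mu$- and $\nu$-lines are the degree-$\le 1$ truncations of (3): the vanishing of $\R^1\mu_*\overline{\so}^{\times}$ and $\R^1\nu_*\overline{\so}^{\times}$ together with the degree-$0$ identifications give the displayed $H^1$-comparisons via the low-degree Leray spectral sequence. To obtain the $\lambda$-line I would factor $\nu=\mu\circ\lambda$ and feed the already-established $\mu$- and $\nu$-results into the five-term exact sequence of the Grothendieck spectral sequence $\R^p\mu_*\R^q\lambda_*\overline{\so}^{\times}\Rightarrow\R^{p+q}\nu_*\overline{\so}^{\times}$; this yields $\lambda_*\overline{\so}^{\times}_v\simeq\overline{\so}^{\times}_{\proeet}$ and $\R^1\lambda_*\overline{\so}^{\times}=0$, and hence the last $H^1$-isomorphism.

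The step I expect to be the main obstacle is precisely the isomorphism $\R^{\ge1}\nu_*U\xrightarrow{\sim}\R^{\ge1}\nu_*\so^{\times}$ (and its $\mu$-analogue): one must genuinely control the sheaves $U$ and $\so^{\times}$ themselves, not merely the additive sheaf $\so^+$, reconciling the logarithmic and the $d\log$/Kummer descriptions of the Hodge--Tate contributions while keeping track of the $\mu_{p^\infty}$-torsion. The conceptual point is that all the perfectoid ``fuzz''---the Hodge--Tate differentials and the $p$-power roots of unity---lives in the principal-unit part $U$ and therefore cancels in the quotient $\overline{\so}^{\times}$; making this cancellation rigorous is exactly where the combination of Scholze's almost-mathematics with the unique $p$-divisibility of $\overline{\so}^{\times}$ is indispensable.
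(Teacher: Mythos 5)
Your part (1) agrees with the paper (which simply quotes \cite[Th.\,3.5.8]{KL16}), and your reduction of parts (2)--(3) to the single ``key claim'' --- that the inclusion $U\hookrightarrow\so^{\times}$ induces isomorphisms $\R^i\nu_*U\xrightarrow{\sim}\R^i\nu_*\so^{\times}$ and $\R^i\mu_*U\xrightarrow{\sim}\R^i\mu_*\so^{\times}$ for $i\ge1$ --- is logically sound. The genuine gap is your proof of the key claim: it is circular. The ingredients you invoke on the multiplicative side (``$d\log$/Kummer theory and Scholze's computation of $\R\nu_*\so^+$ and $\R\nu_*\so$'') do not compute $\R^i\nu_*\so^{\times}$: Scholze's results in \cite{Sch13a} concern the additive sheaves and torsion local systems, and the Kummer sequence only shows that $\R^i\nu_*\so^{\times}$ is uniquely $p$-divisible for $i\ge1$. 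The identification $\R^i\nu_*\so^{\times}\simeq\Omega^i_X(-i)$ is precisely Heuer's theorem --- \cite[Th.\,1.7, Cor.\,2.11]{Heu2} in general, and for $i=1$ the main result of \cite{Heu21} --- and in those references, as in the present paper (note that Lemma~\ref{log}(2) is stated as a consequence of the isomorphisms \eqref{even2}), it is \emph{deduced from} the comparison statements for $\overline{\so}^{\times}$ that you are trying to prove, via the exponential sequence in \eqref{even1}. Indeed, a diagram chase with $1\to\Q_p/\Z_p(1)\to U\to\so\to1$ and $1\to\Q_p/\Z_p(1)\to\so^{\times}\to\so^{\times}[\tfrac{1}{p}]\to1$ shows that your key claim is \emph{equivalent} to the positive-degree part of (3); taking a multiplicative Hodge--Tate computation as input therefore begs the question. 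What any proof must supply --- and what the paper's citations point to --- is a direct perfectoid-local computation of the cohomology of $\overline{\so}^{\times}$ itself (Heuer does this using that $\so^{\times}/U$ is insensitive to tilting, together with reduction to good-reduction charts). Unique $p$-divisibility of $\overline{\so}^{\times}$ plus the identity $d\log=d\circ\log$ cannot by themselves exclude $v$-local positive-degree classes of $\so^{\times}$ that do not come from $U$; their non-existence is exactly the assertion $\R^{\ge1}\nu_*\overline{\so}^{\times}=0$.

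There is a second, independent gap in your treatment of the $\lambda$-line of part (2): it does not follow formally from the $\mu$- and $\nu$-lines. From $\R\nu_*=\R\mu_*\circ\R\lambda_*$ the five-term sequence gives, using $\R^1\nu_*\overline{\so}^{\times}=0$, only the vanishing $\R^1\mu_*(\lambda_*\overline{\so}^{\times}_v)=0$ and an injection $\mu_*\R^1\lambda_*\overline{\so}^{\times}_v\hookrightarrow\R^2\mu_*(\lambda_*\overline{\so}^{\times}_v)$; equivalently, the two established lines only tell you that the cone of $\overline{\so}^{\times}_{\proeet}\to\R\lambda_*\overline{\so}^{\times}_v$ is killed by $\R\mu_*$. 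That does not force the cone to vanish, because $\R\mu_*$ does not reflect vanishing: sheaves on $X_{\proeet}$ are not determined by their sections over \'etale objects (this is the whole point of the pro-\'etale site, e.g.\ $\so_{\proeet}$ versus $\mu^*\so_{\eet}$), so a nonzero pro-\'etale sheaf can have trivial derived pushforward to $X_{\eet}$. The $\lambda$-comparison must again be proved locally on affinoid perfectoids, which is how the proof of \cite[Lem.\,2.22]{Heu21}, cited by the paper for part (2), actually proceeds.
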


\begin{proof}
The first point follows from the result of Kedlaya-Liu \cite[Th.\,3.5.8]{KL16}, as explained above. For the second point, see the proof of Lemma~2.22 in \cite{Heu21}.  The third claim is proved in  \cite[Th.\,1.7, Cor.\,2.11]{Heu2}.
\end{proof}

We recall now the exponential and logarithm  maps from \cite{Heu21}. 
They will be used to define the Hodge-Tate logarithm ${\rm HTlog}$. 
The logarithm exact sequence stated below will play an important role in the computation of the cokernel of the map~\eqref{et-v} as it will allow us to compare it to the $p$-adic pro-\'etale cohomology. 
   
The usual $p$-adic exponential and logarithm maps 
${\rm exp}(x)= \sum_n x^n/n!$ and 
{$\log(x)=\sum_{n\ge 1}\frac{(1-x)^n}{n}$} 
define morphisms of sheaves $${\rm exp}: p'\so^{+} \to 1+ p'\so^{+} \text{ and } {\rm log}:1+ \mathfrak{m} \so^{+} \to \so$$ where $p'=p$ if $p>2$ and $p'=4$ if $p=2$, such that $\log(1 +p'\so^{+})\subset p'\so^+$, ${\rm exp} \circ \log={\rm Id}$ on $1 +p'\so^+$ and $\log\circ\exp ={\rm Id}$ on {$p'\so^{+}$}. We have the following result:

 \begin{lemma}{\rm (Heuer, \cite[Lem.\,2.18, Prop.\,2.21]{Heu21})}
 \label{log}
 Let $X$ be a smooth rigid space over $C$ and $\nu: X_v \to X_{\eet}$ and $\mu: X_{\proeet} \to X_{\eet}$ as before. 
 \begin{enumerate}
 \item For $\tau\in\{v, {\eet}, \proeet\}$, there are exact sequences of sheaves on $X_{\tau}$: 
 \begin{align}\label{even1}
& 1\to (\Q_p/\Z_p)(1) \to U_{\tau} \xrightarrow{{\rm log}}  \so_{\tau} \to 1, \\ 
& 1\to \so_{\tau} \xrightarrow{{\rm exp}} \so_{\tau}^{\times}[\tfrac{1}{p}] \to {\overline{\so}}_{\tau}^{\times} \to 1. \notag
  \end{align} 
  \item Let $i\ge 1$. The  maps \eqref{even1} and the isomorphisms \eqref{even2}  induce natural isomorphisms 
 \begin{align*}
 &{\rm log}: \R^i\nu_* U \xrightarrow{\sim} \R^i\nu_*\so \quad \text{ and } \quad  \R^i\mu_* U \xrightarrow{\sim} \R^i\mu_*\so; \\ 
 &{\rm exp}: \R^i\nu_*\so \xrightarrow{\sim} \R^i\nu_*\so^{\times} \quad \text{ and } \quad  \R^i\mu_*\so \xrightarrow{\sim} \R^i\mu_*\so^{\times}. 
 \end{align*}
 \end{enumerate}
 \end{lemma}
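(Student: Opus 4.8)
The plan is to establish the two sheaf sequences of part (1) directly from the analytic properties of ${\rm log}$ and ${\rm exp}$, and then to obtain part (2) by pushing forward along $\nu$ and $\mu$, feeding in \eqref{even2} together with a vanishing statement for torsion local systems. For the first sequence in \eqref{even1}, I would observe that every section of $U_\tau=1+\mathfrak{m}_C\so^+_\tau$ is congruent to $1$ modulo an element of absolute value $<1$, so the series ${\rm log}$ converges and defines ${\rm log}\colon U_\tau\to\so_\tau$. Its kernel is the subsheaf of principal units killed by ${\rm log}$, i.e. the $p$-power roots of unity $(\Q_p/\Z_p)(1)=\varinjlim_n(\Z/p^n)(1)$; that there is nothing more follows from the injectivity of ${\rm log}$ on $1+p'\so^+$, where it is inverse to ${\rm exp}$. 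The substantive point is surjectivity onto $\so_\tau$: given a local section $f$, choose $N$ with $p^Nf\in p'\so^+$, so that $u:={\rm exp}(p^Nf)\in 1+p'\so^+\subset U_\tau$ satisfies ${\rm log}(u)=p^Nf$; a $p^N$-th root $v$ of $u$ then has $p^N{\rm log}\,v={\rm log}(v^{p^N})=p^Nf$, hence ${\rm log}\,v=f$. Such a root exists $\tau$-locally because $x^{p^N}=u$ is finite \'etale over $\{u\ne 0\}$ (we are in characteristic $0$), and a short valuation (Newton polygon) estimate shows that the relevant root again lies in $U_\tau$; this is precisely where the topology enters.

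For the second sequence, the same device defines ${\rm exp}\colon\so_\tau\to\so^\times_\tau[\tfrac1p]$ by ${\rm exp}(f):={\rm exp}(p^Nf)^{1/p^N}$ for $N\gg0$, and it is injective because $\so_\tau$ is $p$-torsion free. Its image is the $p$-divisible hull $U_\tau[\tfrac1p]$ of the principal units: any $u^{1/p^N}$ with $u\in U_\tau$ equals ${\rm exp}\big(\tfrac1{p^N}{\rm log}\,u\big)$. Since $\overline{\so}^\times_\tau[\tfrac1p]\simeq\overline{\so}^\times_\tau$ by \cite[Lem.\,2.16]{Heu21}, this hull is exactly the kernel of $\so^\times_\tau[\tfrac1p]\to\overline{\so}^\times_\tau$, while surjectivity of the latter is immediate from $\overline{\so}^\times_\tau=\so^\times_\tau/U_\tau$; this yields the exactness.

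For part (2), I would apply $\R\nu_*$ and $\R\mu_*$ to \eqref{even1} and to the tautological sequence $1\to U\to\so^\times\to\overline{\so}^\times\to1$, using two inputs. First, $(\Q_p/\Z_p)(1)$ is a torsion local system pulled back from $X_{\eet}$, so by the comparison of $v$-, pro-\'etale and \'etale cohomology for torsion coefficients \cite{SW20} its higher direct images under $\nu$ and $\mu$ vanish (the higher images of each $(\Z/p^n)(1)$ vanish and $\R^i\nu_*$ commutes with the colimit over $n$). Second, $\R^{\ge1}\nu_*\overline{\so}^\times=0=\R^{\ge1}\mu_*\overline{\so}^\times$ by \eqref{even2}. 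The long exact sequence of the first sequence in \eqref{even1} then gives ${\rm log}\colon\R^i\nu_*U\xrightarrow{\sim}\R^i\nu_*\so$ for $i\ge1$, and that of $1\to U\to\so^\times\to\overline{\so}^\times\to1$ gives $\R^i\nu_*U\xrightarrow{\sim}\R^i\nu_*\so^\times$; since $\R^i\nu_*\so^\times\simeq\R^i\nu_*U\simeq\R^i\nu_*\so$ and the last is a sheaf of $\Q_p$-vector spaces, $\R^i\nu_*\so^\times$ is uniquely $p$-divisible. As $\so^\times[\tfrac1p]=\varinjlim_{x\mapsto x^p}\so^\times$ and $\R^i\nu_*$ commutes with filtered colimits, $\R^i\nu_*(\so^\times[\tfrac1p])=(\R^i\nu_*\so^\times)[\tfrac1p]=\R^i\nu_*\so^\times$, so the long exact sequence of the second sequence in \eqref{even1} yields ${\rm exp}\colon\R^i\nu_*\so\xrightarrow{\sim}\R^i\nu_*\so^\times$; the identical argument applies to $\mu$.

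The main obstacle is the exactness of the sheaf sequences in (1), concretely the surjectivity of ${\rm log}$ (equivalently, extending ${\rm exp}$ to all of $\so$): this is the only step that genuinely uses the topology, through the local existence of $p$-power roots of units, and it requires checking both that such roots exist $\tau$-locally for $\tau\in\{v,\proeet,\eet\}$ and that the correct root is again a principal unit. In part (2) the sole non-formal ingredient is the vanishing of the higher direct images of $(\Q_p/\Z_p)(1)$; once this and \eqref{even2} are in hand, the asserted isomorphisms follow formally from the long exact sequences.
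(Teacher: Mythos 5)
The paper itself offers no proof of this lemma: it is imported verbatim from Heuer \cite[Lem.\,2.18, Prop.\,2.21]{Heu21}, and your argument is in substance exactly the proof given in that reference — convergence of $\log$ on $U_\tau$, the scaling trick $p^N f\in p'\so^+$ combined with \'etale Kummer covers (and the observation that any $p$-power root of a principal unit is again a principal unit) for exactness of the first sequence, the colimit definition of $\exp$ landing in $\so^\times_\tau[\tfrac1p]$ for the second, and then for part (2) the long exact sequences fed with the vanishing of $\R^{\ge1}\nu_*$ and $\R^{\ge1}\mu_*$ on torsion local systems and on $\overline{\so}^{\times}$ via \eqref{even2}. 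So your proposal is correct and follows essentially the same route as the cited source; the only points worth making explicit in a full write-up are the $i=1$ boundary case (surjectivity of $\nu_*\so^{\times}\to\nu_*\overline{\so}^{\times}$, which follows from the identifications $\nu_*\so^{\times}_v\simeq\so^{\times}_{\eet}$ and $\nu_*\overline{\so}^{\times}_v\simeq\overline{\so}^{\times}_{\eet}$) and the commutation of $\R^i\nu_*$ with the filtered colimits defining $\Q_p/\Z_p(1)$ and $\so^{\times}[\tfrac1p]$ on the $v$-site, both of which Heuer's own proof also requires.
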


\subsection{The Hodge-Tate logarithm}

We recall here the definition of the Hodge-Tate logarithm from \cite{Heu21}. 
\begin{proposition}[Hodge-Tate morphisms]
\label{omega}
 Let $X$ be a smooth rigid space over $C$, $\nu: X_v \to X_{\eet}$ and $\mu: X_{\proeet} \to X_{\eet}$ as before. Then for all $i \ge 0$, there are $\so_X$-linear isomorphisms: 
\[ {\rm HT}: \R^i\nu_*\so\xrightarrow{\sim}\Omega^i_X(-i),\quad {\rm HT}: \R^i\mu_*\so\xrightarrow{\sim}\Omega^i_X(-i)  \quad \text{on } X_{\eet}. \]
\end{proposition}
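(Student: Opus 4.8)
The plan is to first establish the statement for $\mu\colon X_{\proeet}\to X_{\eet}$, which is Scholze's Hodge--Tate computation, and then to deduce the $v$-version by comparing the pro-\'etale and $v$-topologies via the factorization $\nu=\mu\circ\lambda$ from Proposition~\ref{topo}.

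First I would reduce the pro-\'etale case to a local computation. The assertion is \'etale-local on $X$, so I may assume $X=\Spa(R,R^+)$ is a smooth affinoid admitting a toric chart, i.e.\ an \'etale map $X\to \mathbb{T}^d=\Spa(C\langle T_1^{\pm 1},\dots,T_d^{\pm 1}\rangle)$ factoring as rational localizations and finite \'etale maps. Pulling back the perfectoid pro-\'etale cover $\mathbb{T}^d_\infty\to\mathbb{T}^d$ obtained by adjoining all $p$-power roots of the coordinates gives an affinoid perfectoid cover $X_\infty\to X$ with Galois group $\Gamma\cong\Z_p(1)^d$. On affinoid perfectoids the higher pro-\'etale cohomology of the completed structure sheaf $\so$ vanishes, so $\R\mu_*\so$ is computed by the continuous group cohomology $\R\Gamma_{\mathrm{cont}}(\Gamma,\so(X_\infty))$.

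Second, I would carry out the decompletion (Tate--Sen) step: decomposing $\so(X_\infty)$ into $\Gamma$-isotypic pieces identifies $H^i_{\mathrm{cont}}(\Gamma,\so(X_\infty))$, after the Tate twist recorded by $\Gamma=\Z_p(1)^d$, with a free $R$-module on the classes $d\log T_{j_1}\wedge\cdots\wedge d\log T_{j_i}$. Concretely, this first yields the degree-one isomorphism $\R^1\mu_*\so\cong\Omega^1_X(-1)$, and then the cup-product structure of $\R\mu_*\so$ --- which makes it the exterior algebra on its degree-one part --- upgrades this to $\R^i\mu_*\so\cong\bigwedge^i\Omega^1_X(-1)=\Omega^i_X(-i)$ for all $i$. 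The $\so_X$-linearity, the independence of the chosen chart, and the gluing of the local isomorphisms all follow from naturality; this is the content of \cite{Sch13a}.

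Finally, I would obtain the $v$-version from $\nu=\mu\circ\lambda$ with $\lambda\colon X_v\to X_{\proeet}$. The key point is that the completed structure sheaf is insensitive to the passage from the pro-\'etale to the $v$-topology: on an affinoid perfectoid the higher $v$-cohomology of $\so$ vanishes and its global sections are unchanged, so $\R\lambda_*\so_v\simeq\so_{\proeet}$. The Leray spectral sequence for $\nu=\mu\circ\lambda$ then gives $\R^i\nu_*\so\simeq\R^i\mu_*(\R\lambda_*\so)\simeq\R^i\mu_*\so\simeq\Omega^i_X(-i)$, compatibly with, and hence as $\so_X$-linearly as, the pro-\'etale isomorphism. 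I expect the decompletion step to be the main obstacle --- pinning down the Galois cohomology of $\so(X_\infty)$ together with the correct Tate twist, and verifying the exterior-algebra structure that promotes the degree-one identification to all degrees simultaneously. The chart-independence, gluing, and $\so_X$-linearity are then formal, and since the whole pro-\'etale statement is standard I would in the write-up simply cite \cite{Sch13a} and spell out only the comparison with the $v$-topology.
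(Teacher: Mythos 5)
Your proposal is correct and takes essentially the same route as the paper: the pro-\'etale case is Scholze's theorem, which the paper simply cites (\cite[Cor.\,6.19]{Sch13a}, \cite[Prop.\,3.23]{Sch13b}, with ${\rm HT}$ defined as the inverse of the Faltings-extension boundary map, of which your toric-chart/decompletion sketch is the underlying proof), and the $v$-case is deduced exactly as you do, from $\R\lambda_*\so_v=\so_{\proeet}$ and the factorization $\nu=\mu\circ\lambda$.
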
 
For $\mu$, the result is due to Scholze in \cite[Cor.\,6.19]{Sch13a}, \cite[Prop.\,3.23]{Sch13b}: the Hodge-Tate morphism ${\rm HT}$ is defined as the inverse of the connecting morphism in the  Faltings extension (see Section~\ref{faltings-ext}). We obtain the result for $\nu$ using that $\R\lambda_*\so_v=\so_{\proeet}$.   

The Leray spectral sequence for the morphism $\nu : X_v \to X_{\eet}$ 
and the sheaf $\so^{\times}$
\begin{equation}\label{leray_nu}
E_2^{i,j}= H^i_{\eet}(X, \R^j\nu_{*} \so^{\times}) \Rightarrow H^{i+j}_{v}(X, \so^{\times}) 
\end{equation}
induces an exact sequence: 
\begin{equation}\label{even3} 0 \to H^1_{\eet}(X, \nu_{*} \so^{\times}) \to H^1_{v}(X,  \so^{\times}) \to H^0_{\eet}(X, \R^1\nu_{*} \so^{\times}) \to H^2_{\eet}(X, \nu_{*} \so^{\times}) \to H^2_v(X,  \so^{\times}).
\end{equation}
For $i\geq 1$, we define the Hodge-Tate logarithm
$$
{\rm HTlog}_i: H^i_v(X,  \so^{\times}) \to H^0_{\eet}(X, \Omega_X^i(-i))
$$
 as the composition:
\begin{equation}
\label{HTlog}
{\rm HTlog}_i: H^i_v(X,  \so^{\times}) \to H^0_{\eet}(X, \R^i\nu_{*} \so^{\times}) \xleftarrow[\sim]{{\rm exp}} H^0_{\eet}(X, \R^i\nu_{*} \so) \xrightarrow{\sim} H^0_{\eet}(X, \Omega_X^i(-i)),
\end{equation}
where the first arrow is the edge map of the Leray spectral sequence \eqref{leray_nu}, the second one is the exponential map from Lemma~\ref{log},  and the third one is the isomorphism from Proposition~\ref{omega}. We obtain an analogous morphism replacing the $v$-topology by the pro-\'etale one.

  Since $\nu_*\so^{\times}=\so^{\times}$ and using the map \eqref{HTlog}, we can rewrite the exact sequence \eqref{even3} as 
$$
 0 \to {\rm Pic}_{\rm an}(X) \to  {\rm Pic}_{v}(X)\lomapr{\rm HTlog} \Omega^1(X)(-1) \to H^2_{\eet}(X,  \so^{\times}) \to H^2_v(X,  \so^{\times}) 
$$
\begin{remark}
By Lemma \ref{log}, we can also consider the restriction of ${\rm HTlog}$ to $U$ and use the pro-\'etale and $v$-topology interchangeably.  
For $i\geq 1$, we define the Hodge-Tate logarithm
$$
{\rm HTlog}_{{U,i}}: H^i_v(X,  U) \to H^0_{\eet}(X, \Omega_X^i(-i))
$$
similarly as the composition:
\begin{equation}
\label{HTlogU}
{\rm HTlog}_{{U,i}}: H^i_v(X, U) \to H^0_{\eet}(X, \R^i\nu_{*} U) \xrightarrow[\sim]{{\rm log}} H^0_{\eet}(X, \R^i\nu_{*} \so) \xrightarrow{\sim} H^0_{\eet}(X, \Omega_X^i(-i)).
\end{equation}
It is compatible with the map ${\rm HTlog}$ from \eqref{HTlog}. 
\end{remark}

\begin{remark}
\label{HTstein}
For a Stein space, we know that for a coherent sheaf $\mathcal{F}$, the group $H^i_{\eet}(X, \mathcal{F})$ is zero for $i >0$. In particular, we obtain that $H^i_{\eet}(X, \R^j\mu_*\so)=0$, $i\ge 1$,  and hence the Leray spectral sequence for  $\mu: X_{\proeet} \to X_{\eet}$ and the sheaf $\so$,
induces an isomorphism 
$$ H^i_{\proeet} (X, \so) \xrightarrow{\sim} H^0_{\eet}(X, \R^i\mu_*\so).$$
In the following, when $X$ is Stein, we still write ${\rm HT}$ for the composition: 
\[ {\rm HT} : H^i_{\proeet} (X, \so) \xrightarrow{\sim} H^0_{\eet}(X, \R^i\mu_*\so) \xrightarrow{\sim} \Omega^i(X)(-i). \] 
And similarly for the $ v$-topology:
\[ {\rm HT} : H^i_{v} (X, \so) \xrightarrow{\sim} H^0_{\eet}(X, \R^i\nu_*\so) \xrightarrow{\sim} \Omega^i(X)(-i). \] 
\end{remark}

\subsection{The Leray spectral sequence for Stein spaces} 
We show here that,  in the case of smooth Stein spaces, the Leray spectral sequence for the projection $\nu: X_v\to X_{\eet}$ and the sheaf $\so^{\times}$ simplifies enormously. 

\begin{proposition}
\label{ses-leray}
Let $X$ be a smooth Stein rigid analytic variety of dimension $d$ over $C$. Then:
\begin{enumerate} 
\item $H^0_{\eet}(X, \so^{\times}) \simeq H^0_{v}(X, \so^{\times})$ and $H^i_{\eet}(X, \so^{\times}) \simeq H^i_{v}(X, \so^{\times})$ for $i \ge d+1$.  

\item We have exact sequences: 
\begin{align*}
&0 \to H^1_{\eet}(X, \so^{\times}) \to H^1_{v}(X, \so^{\times}) \xrightarrow{{\rm HTlog}_1} {\rm Ker}({\partial_2}:
\Omega^1(X)(-1) \to H^{2}_{\eet}(X, \so^{\times})) \to 0 \\ 
&0 \to H^i_{\eet}(X, \so^{\times})/{\rm Im}\, {\partial_i} 
\to H^i_{v}(X, \so^{\times})  \xrightarrow{{\rm HTlog}_i}  {\rm Ker}( {\partial_{i+1}} 
:\Omega^i(X)(-i) \to H^{i+1}_{\eet}(X, \so^{\times})) \to 0  \text{ for all } d \ge i \ge 2.  
\end{align*}

\end{enumerate}  
\end{proposition}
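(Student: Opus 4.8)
The plan is to run the Leray spectral sequence \eqref{leray_nu} for $\nu\colon X_v\to X_{\eet}$ and the sheaf $\so^{\times}$, and to show that for a smooth Stein $X$ it collapses into the asserted short exact sequences. First I would compute the $E_2$-page $E_2^{i,j}=H^i_{\eet}(X,\R^j\nu_*\so^{\times})$. On the bottom row $\nu_*\so^{\times}=\so^{\times}$ gives $E_2^{i,0}=H^i_{\eet}(X,\so^{\times})$. For $j\ge 1$, combining the exponential isomorphism $\R^j\nu_*\so\xrightarrow{\sim}\R^j\nu_*\so^{\times}$ of Lemma~\ref{log} with the Hodge--Tate isomorphism $\R^j\nu_*\so\xrightarrow{\sim}\Omega^j_X(-j)$ of Proposition~\ref{omega} identifies $\R^j\nu_*\so^{\times}\cong\Omega^j_X(-j)$, a coherent sheaf which moreover vanishes for $j>d=\dim X$. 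Since $X$ is Stein, coherent sheaves have no higher étale cohomology, so $E_2^{i,j}=H^i_{\eet}(X,\Omega^j_X(-j))=0$ for all $i\ge 1$, $j\ge 1$. Thus the $E_2$-page is supported on the bottom row (for every $i\ge 0$) and on the left column $E_2^{0,j}=\Omega^j(X)(-j)$ for $1\le j\le d$, and is zero elsewhere.

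With this shape, the only possibly nonzero differentials are those leaving the left column and landing on the bottom row: for $1\le j\le d$ the differential $d_{j+1}\colon E_{j+1}^{0,j}\to E_{j+1}^{j+1,0}$, which I will call $\partial_{j+1}\colon \Omega^j(X)(-j)\to H^{j+1}_{\eet}(X,\so^{\times})$. A direct check of positions shows that neither the source nor the target of $\partial_{j+1}$ is altered by any earlier differential, so $\partial_{j+1}$ is computed on the genuine $E_2$-entries and, setting $\partial_1:=0$, one reads off
\[
E_\infty^{i,0}=\coker\bigl(\partial_i\colon\Omega^{i-1}(X)(-(i-1))\to H^i_{\eet}(X,\so^{\times})\bigr),\qquad E_\infty^{0,j}=\ker(\partial_{j+1}).
\]

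Since all the middle terms $E_\infty^{i,j}$ ($i,j\ge 1$) vanish, the abutment filtration on $H^n_v(X,\so^{\times})$ has at most the two graded pieces $E_\infty^{n,0}$ and $E_\infty^{0,n}$, giving the short exact sequence $0\to E_\infty^{n,0}\to H^n_v(X,\so^{\times})\to E_\infty^{0,n}\to 0$, with $E_\infty^{n,0}$ the bottom step of the filtration and hence a subobject. The quotient map $H^n_v(X,\so^{\times})\to E_\infty^{0,n}\hookrightarrow E_2^{0,n}=\Omega^n(X)(-n)$ is exactly the edge map of the spectral sequence, hence by the definition \eqref{HTlog} it is ${\rm HTlog}_n$, whose image is therefore $E_\infty^{0,n}=\ker(\partial_{n+1})$. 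Reading this off degree by degree yields the statement: $n=0$ gives $H^0_{\eet}(X,\so^{\times})\cong H^0_v(X,\so^{\times})$ (the corner term, unaffected by any differential); $n=1$ gives the first exact sequence (here $\partial_1=0$, so $E_\infty^{1,0}=H^1_{\eet}(X,\so^{\times})$); and $2\le n\le d$ gives the second family, with first term $H^n_{\eet}(X,\so^{\times})/\im\,\partial_n$ and last term $\ker(\partial_{n+1})$.

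It remains to treat part~(1) in the range $i\ge d+1$, which I expect to be the main obstacle. For $i\ge d+2$ this is immediate from the shape of the $E_2$-page: the source $\Omega^{i-1}(X)(-(i-1))$ of $\partial_i$ vanishes since $i-1>d$, and $E_\infty^{0,i}\subseteq\Omega^i(X)(-i)=0$, so the edge map $H^i_{\eet}(X,\so^{\times})=E_2^{i,0}\to H^i_v(X,\so^{\times})$ is an isomorphism. The remaining case $i=d+1$ is the crux: here $H^{d+1}_v(X,\so^{\times})=\coker(\partial_{d+1})$ with $\partial_{d+1}\colon\Omega^d(X)(-d)\to H^{d+1}_{\eet}(X,\so^{\times})$, and the asserted isomorphism is equivalent to $\partial_{d+1}=0$. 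I would resolve it by establishing the cohomological-dimension vanishing $H^i_{\eet}(X,\so^{\times})=0$ for all $i>d$; granting this, the target of $\partial_{d+1}$ is zero, so $\partial_{d+1}=0$ and both sides of part~(1) vanish for $i\ge d+1$. To prove the vanishing I would peel off the structure of $\so^{\times}$ using the sequences \eqref{even1}: the Stein vanishing $H^{\ge 1}_{\eet}(X,\so)=0$ together with $1\to(\Q_p/\Z_p)(1)\to U\to\so\to 1$ reduces the higher cohomology of $U$ to étale cohomology with the torsion coefficients $(\Q_p/\Z_p)(1)$, while $1\to\so\to\so^{\times}[\tfrac1p]\to\overline{\so}^{\times}\to 1$ together with $\R\nu_*\overline{\so}^{\times}=\overline{\so}^{\times}$ from \eqref{even2} controls the quotient; the essential remaining input is a bound of $d$ on the étale cohomological dimension of a $d$-dimensional smooth Stein space. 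As a consistency check, the later computation $H^i_{\eet}({\mathbb A}^n_C,\so^{\times})\cong\Omega^i({\mathbb A}^n_C)^{d=0}(-i+1)=0$ for $i>n$ confirms that this is the correct bound.
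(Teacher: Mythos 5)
Your spectral-sequence analysis is exactly the paper's proof: the $E_2$-page computation via Lemma~\ref{log}, Proposition~\ref{omega} and the Stein vanishing of coherent cohomology, the observation that the only surviving differentials are $\partial_{j+1}\colon \Omega^j(X)(-j)\to H^{j+1}_{\eet}(X,\so^{\times})$ and that they are computed on genuine $E_2$-entries, the two-step abutment filtration, and the identification of the quotient edge map with ${\rm HTlog}_n$ from \eqref{HTlog}. This correctly gives part (2), and part (1) for $i=0$ and $i\geq d+2$. You are also right to single out $i=d+1$ as the crux: the spectral sequence alone only yields $H^{d+1}_{v}(X,\so^{\times})\simeq H^{d+1}_{\eet}(X,\so^{\times})/{\rm Im}\,\partial_{d+1}$, so part (1) there is equivalent to $\partial_{d+1}=0$. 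This is a point the paper's own written proof glosses over: its $E_\infty$-description records $E_\infty^{d+1,0}\simeq H^{d+1}_{\eet}(X,\so^{\times})/{\rm Im}\,d_{d+1}$ and stops there, so your diagnosis is sharper than the source.

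Your proposed resolution of that case, however, is a genuine gap. You reduce it to the vanishing $H^{i}_{\eet}(X,\so^{\times})=0$ for $i>d$ and ultimately to ``a bound of $d$ on the \'etale cohomological dimension of a $d$-dimensional smooth Stein space'', which you do not prove --- and this input is substantially harder than the proposition itself. Concretely: (a) for the $U$-part, the log sequence reduces you to $H^{>d}_{\eet}(X,\Q_p/\Z_p(1))$, i.e.\ to Artin vanishing for \emph{$p$-torsion} coefficients, which even for affinoids is a deep recent theorem (Bhatt--Hansen), and passing from an affinoid exhaustion $X=\bigcup_k U_k$ to $X$ leaves a term $\R^1\lim_k H^{d}_{\eet}(U_k,\Q_p/\Z_p(1))$ sitting in exactly the critical degree $d+1$; these groups are enormous $p$-divisible groups with no evident Mittag--Leffler property, so this term is not obviously zero. (b) For the quotient, $\overline{\so}^{\times}$ is neither coherent nor torsion, so no cohomological-dimension bound for torsion sheaves applies to it, and \eqref{even2} only identifies its \'etale and $v$-cohomologies --- it gives no vanishing; your phrase ``controls the quotient'' hides an unproved (and, for general Stein spaces, unknown) statement. (c) Your consistency check is circular: the computation $H^i_{\eet}({\bb A}^n_C,\so^{\times})\simeq\Omega^i({\bb A}^n_C)^{d=0}(-i+1)$ for $i>n$ in Corollary~\ref{may1} is deduced in the paper \emph{from} part (1) of this proposition at $i=n+1$. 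A repair that stays inside the paper and avoids all of this: $\partial_{d+1}=0$ follows from Theorem~\ref{image00}, whose proof is independent of the present proposition (it uses only the pro-\'etale Leray sequence for $\so$, Lemma~\ref{vendredi2} and Proposition~\ref{ajin}). Indeed, in top degree every form is closed, $\Omega^d(X)^{d=0}=\Omega^d(X)$, so Theorem~\ref{image00} gives ${\rm Im}({\rm HTlog}_{U,d})=\Omega^d(X)(-d)$; since ${\rm Im}({\rm HTlog}_{d})\supseteq{\rm Im}({\rm HTlog}_{U,d})$ and, by your part (2) in degree $d$, ${\rm Im}({\rm HTlog}_{d})={\rm Ker}(\partial_{d+1})$, this forces ${\rm Ker}(\partial_{d+1})=\Omega^d(X)(-d)$, i.e.\ $\partial_{d+1}=0$, whence $H^{d+1}_{\eet}(X,\so^{\times})\simeq H^{d+1}_{v}(X,\so^{\times})$.
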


Here the maps ${\partial_{i+1}} 
: \Omega^i(X)(-i) \to H^{i+1}_{\eet}(X, \so^{\times})$ (for $i\ge1$) are the maps given by the composition 
\begin{equation}
\label{eau}
{\partial_{i+1}} 
:\quad \Omega^i(X)(-i) \xleftarrow[\sim]{\rm HT} H^0(X, \R^i\nu_*\so) \xrightarrow[\sim]{\rm exp} H^0(X, \R^i\nu_*\so^{\times}) \lomapr{d_{i+1}}  H^{i+1}_{\eet}(X, \so^{\times}),
\end{equation}
where $d_{i+1}$ is the only differential on the $E_{i+1}$-page of the Leray spectral sequence (see the proof):
\[ E_2^{i,j}= H^i_{\eet}(X, \R^j\nu_{*} \so^{\times}) \Rightarrow H^{i+j}_{v}(X, \so^{\times}) \]

\begin{proof}
We analyze the terms of the above spectral sequence. 

(i) {\em The $E_2$-page.} 
For all $i,j\ge1$, we have isomorphisms
\[ H^i_{\eet}(X, \R^j\nu_{*} \so^{\times}) \xrightarrow{\sim} H^i_{\eet}(X, \Omega^j_X(-j)) \]
and the term on the right is zero since $X$ is Stein. So, on the page $E_2$, the only non-zero terms will be in the row $j=0$ and column $i=0$ (for $0\le j\le d$) and we have: 
\[ E_2^{i,0} = H^i_{\eet}(X, \so^{\times}) \text{ and } E_2^{0,j} =H^0_{\eet}(X,\R^j{\nu_*}\so^{\times})\text{ for all } i\ge 0,d \ge  j \ge 1. \]
There is  only one non-zero differential  $d_2: H^0_{\eet}(X,\R^1{\nu_*}\so^{\times}) \to H^2_{\eet}(X, \so^{\times})$.  

(ii) {\em The $E_3$-page.} The terms $E_3^{i,j}$ are equal to $E_2^{i,j}$ except for $(i,j) \in \{ (2,0), (0,1) \}$ where they are
\[ E_3^{2,0} \simeq H^2_{\eet}(X, \so^{\times})/{\rm Im}\, d_2 \text{ and } E_3^{0,1} \simeq  {\rm Ker}(d_2). \]
There is  only one non-zero differential  $d_3:H^0_{\eet}(X,\R^2{\nu_*}\so^{\times}) \to H^3_{\eet}(X, \so^{\times})$.  

(iii) {\em The $E_{\infty}$-page.}  Iterating the above computation, we get that the spectral sequence degenerates at $d+2$ and we have:
\begin{align*}
&E^{0,0}_{\infty}= H^0_{\eet}(X, \so^{\times}), \quad E^{1,0}_{\infty}= H^1_{\eet}(X, \so^{\times}) \\
&E^{i,0}_{\infty}\simeq H^i_{\eet}(X, \so^{\times})/ {\rm Im}\,d_i,\text{ for }  d+1 \ge i \ge 2, \\
&E^{i,0}_{\infty}\simeq H^i_{\eet}(X, \so^{\times}), \text{ for }  i \ge d+2, \\
&E^{0,j}_{\infty}\simeq {\rm Ker}(d_{j+1}: H^0_{\eet}(X,\R^j{\nu_*}\so^{\times}) \to H^{j+1}_{\eet}(X, \so^{\times})), \text{ for }  d \ge j \ge 1,
\end{align*}
as wanted.
\end{proof} 

\begin{corollary}\label{sally1}
Let $X$ be a smooth Stein space of dimension $d$ over $C$. For $d\ge i \ge 2$, there is an exact sequence: 
\begin{equation}\label{kwak1}
 0 \to {\rm Coker}({\rm HTlog}_{i-1}) \to H^i_{\eet}(X, \so^{\times}) \lomapr{\nu^*_i}  {\rm Ker}({\rm HTlog}_{i}) \to 0, 
 \end{equation}
where $\nu^*_i$ is  the pullback  map $H^i_{\eet}(X, \so^{\times})\to H^i_{v}(X, \so^{\times})$.
\end{corollary}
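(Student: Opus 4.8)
The plan is to read off the exact sequence \eqref{kwak1} directly from the Leray spectral sequence
\[ E_2^{i,j}= H^i_{\eet}(X, \R^j\nu_{*} \so^{\times}) \Rightarrow H^{i+j}_{v}(X, \so^{\times}) \]
already analyzed in the proof of Proposition \ref{ses-leray}, the key observation being that the pullback $\nu^*_i$ is exactly the edge map $E_2^{i,0}\to H^i_v(X,\so^{\times})$ of this spectral sequence.

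First I would record how this edge map factors. As in the proof of Proposition \ref{ses-leray}, since $X$ is Stein the only non-zero terms on the $E_2$-page lie in the row $j=0$ and the column $i=0$, so for $d\ge i\ge 2$ the only non-zero differential with target $E^{i,0}$ is $d_i\colon E_i^{0,i-1}\to E_i^{i,0}$, and one checks that no earlier differentials touch either spot, so $E_i^{0,i-1}=E_2^{0,i-1}=H^0_{\eet}(X,\R^{i-1}\nu_*\so^{\times})$ and $E_i^{i,0}=E_2^{i,0}=H^i_{\eet}(X,\so^{\times})$. Under the identifications ${\rm HT}$ and ${\rm exp}$, this $d_i$ is precisely the map $\partial_i$ of \eqref{eau}. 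Hence the edge map factors as the canonical surjection onto $E^{i,0}_\infty=H^i_{\eet}(X,\so^{\times})/{\rm Im}\,\partial_i$ followed by the inclusion $E^{i,0}_\infty\hookrightarrow H^i_v(X,\so^{\times})$; in particular ${\rm Ker}(\nu^*_i)={\rm Im}\,\partial_i$ and ${\rm Im}(\nu^*_i)=E^{i,0}_\infty$.

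Next I would identify the two outer terms. For the right-hand term, the filtration on $H^i_v(X,\so^{\times})$ has only two non-zero graded pieces in this range, the bottom piece $E^{i,0}_\infty={\rm Im}(\nu^*_i)$ and the top piece $E^{0,i}_\infty={\rm Ker}\,\partial_{i+1}$, because every intermediate $E^{p,i-p}_\infty$ with $1\le p\le i-1$ already vanishes on $E_2$. By Proposition \ref{ses-leray} the map ${\rm HTlog}_i$ is the edge map onto this top piece $E^{0,i}_\infty$, so its kernel is the bottom piece, giving ${\rm Ker}({\rm HTlog}_i)={\rm Im}(\nu^*_i)$ and hence exactness at ${\rm Ker}({\rm HTlog}_i)$. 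For the left-hand term, applying Proposition \ref{ses-leray} in degree $i-1$ (its first displayed sequence when $i=2$, its second when $i\ge 3$) shows that ${\rm HTlog}_{i-1}$ has image exactly ${\rm Ker}\,\partial_i\subset\Omega^{i-1}(X)(-(i-1))$, so $\partial_i$ induces an isomorphism
\[ {\rm Coker}({\rm HTlog}_{i-1})=\Omega^{i-1}(X)(-(i-1))/{\rm Ker}\,\partial_i \xrightarrow{\ \sim\ } {\rm Im}\,\partial_i={\rm Ker}(\nu^*_i). \]
Splicing the two identifications together yields \eqref{kwak1}.

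The argument is essentially bookkeeping layered on top of Proposition \ref{ses-leray}, so I expect no serious obstacle. The points requiring genuine care are the compatibility of the spectral-sequence differential $d_i$ with the map $\partial_i$ of \eqref{eau}, so that a single map simultaneously computes ${\rm Ker}(\nu^*_i)$ and ${\rm Im}({\rm HTlog}_{i-1})$; the verification that the filtration on $H^i_v(X,\so^{\times})$ collapses to exactly two graded pieces throughout the range $d\ge i\ge 2$; and the slightly different shape of the degree-$1$ sequence in Proposition \ref{ses-leray} that must be used for the boundary case $i=2$.
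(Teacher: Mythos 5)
Your proposal is correct and follows essentially the same route as the paper: both arguments are pure bookkeeping on the Leray spectral sequence for $\nu$, identifying ${\rm Ker}(\nu^*_i)={\rm Im}(\partial_i)$, ${\rm Im}(\nu^*_i)={\rm Ker}({\rm HTlog}_i)$, and then using Proposition~\ref{ses-leray} in degree $i-1$ to see that $\partial_i$ induces an isomorphism ${\rm Coker}({\rm HTlog}_{i-1})\xrightarrow{\sim}{\rm Ker}(\nu^*_i)$. Your explicit treatment of the boundary case $i=2$ (where the first, rather than the second, sequence of Proposition~\ref{ses-leray} must be invoked) is a point the paper's own proof passes over silently, but it is the same argument.
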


\begin{proof}
Let $d\ge i \ge 2$.  
The second exact sequence from Proposition~\ref{ses-leray} shows that $\nu^*_i$ factorises through $H^i_{\eet}(X, \so^{\times})/{\rm Im}\,d^{\times}_{i}$ and that its image is equal to ${\rm Ker}({\rm HTlog}_{i})$. Hence we get the surjectivity on the right in \eqref{kwak1}. 

Let us now compute the kernel of $\nu^*_i$. By  Proposition~\ref{ses-leray}, it is equal to the image of the map 
$${\partial_i} 
: \Omega^{i-1}(X)(-i+1) \to H^i_{\eet}(X, \so^{\times}). $$ 
Using the second exact  sequences from Proposition~\ref{ses-leray}  but in degree $i-1$,  we obtain that the kernel of ${\partial_i}$ 
is equal to the image of ${\rm HTlog}_{i-1}$. This gives an isomorphism: 
$${\partial_i} 
:  {\rm Coker}({\rm HTlog}_{i-1})  \xrightarrow{\sim}  {\rm Im}({\partial_i}
), $$
as wanted. 
\end{proof}

\section{Comparison of two boundary maps}
\label{compatibility} 

To compute the image of the Hodge-Tate logarithm, we will relate  it to the kernel of  the boundary morphism appearing in the fundamental diagram from \cite[Th.\,1.8]{CDN3}, \cite[Th.\,6.14]{CN5}. In order to check the compatibility between the morphism ${\rm HTlog}$ and the map from \cite{CN5}, we use the alternative definition of the latter given by Bosco in \cite{Bos21}. We start this section by recalling briefly the construction of the two maps.

\subsection{Hodge-Tate map revisited} 

We express here the Hodge-Tate map as a Poincar\'e Lemma  projection map. 
\subsubsection{Poincar\'e Lemma} 
We first state the Poincar\'e Lemma for the de Rham period sheaves $\mathbb{B}^+_{\dr}$ and $\mathbb{B}_{\dr}$ from \cite{Sch13a}. We start by recalling the definitions of the various period sheaves and some of their properties. We work here on the pro-\'etale site of a locally Noetherian adic space $X$ over ${\rm Spa}(\Q_p,\Z_p)$. 
 
The Fontaine period sheaf ${\bb A}_{\rm inf}$ is defined as the sheaf $W(\so_{\proeet}^{\flat,+})$. It comes with a morphism $\theta: {\bb A}_{\rm inf} \to \so^+$. We write ${\bb B}_{\rm inf}:={\bb A}_{\rm inf}[\tfrac{1}{p}]$. The morphism $\theta$ extends to $\theta : {\bb B}_{\rm inf} \to \so$.  
The de Rham period sheaf
 \[ {\bb B}_{\rm dR}^+:= \lim_n {\bb B}_{\rm inf}/({\rm Ker}(\theta))^n \]
admits a filtration ${\rm Fil}^{i}{\bb B}_{\rm dR}^+:= {\rm Ker}(\theta)^i{\bb B}_{\rm dR}^+$. Let $t$ be a generator of ${\rm Fil}^1{\bb B}_{\rm dR}^+$. We set ${\bb B}_{\rm dR}:={\bb B}_{\rm dR}^+[t^{-1}]$ and equip it with the induced filtration. The morphism $\theta$ induces an isomorphism ${\bb B}_{\rm dR}^+/t \xrightarrow{\sim} \so$.   

For $0<u \le v$, we define ${\bb A}^{[u,v]}$ as the $p$-adic completion of the sheaf ${\bb A}_{\rm inf}[\tfrac{p}{[\alpha]},\tfrac{[\beta]}{p}]$ for elements $\alpha$ and $\beta$ in $\so_{C^{\flat}}$ such that $v(\alpha) =\tfrac{1}{v}$ and $v(\beta) =\tfrac{1}{u}$ and 
\[ {\bb B}:= \lim_{0<u \le v} {\bb A}^{[u,v]}. \] 
We have the relative fundamental exact sequence of $p$-adic Hodge theory:
\begin{align}
\label{sesB}
&0 \to \Q_p \to \mathbb{B}[\tfrac{1}{t}]^{\varphi=1} \to  \mathbb{B}_{\rm dR}/\mathbb{B}_{\rm dR}^+ \to 0 \\
&0 \to \Q_p(r) \to \mathbb{B}^{\varphi=p^r} \to  \mathbb{B}_{\rm dR}^+/t^r\mathbb{B}_{\rm dR}^+  \to 0, \notag
\end{align}
for all $r \ge 1$. We note that the map $\mathbb{B}^{\varphi=p} \to  \mathbb{B}_{\rm dR}^+/t\mathbb{B}_{\rm dR}^+ \xrightarrow[\sim]{\theta} \so$ can be identified, via the identification of $\mathbb{B}^{\varphi=p} $ with the $C$-points of the universal cover of the multiplicative $p$-divisible group,  with the composition (see~\cite[Prop.\,2.20, Rem.\,2.21, Ex.\,2.22]{LB18}): 
\begin{equation}
\label{theta-log}
\lim_{x\mapsto x^{p}} (1+ {\frak m} \so)=U^{\flat} \xrightarrow{x \mapsto x^{\sharp}} U \xrightarrow{\log} \so, 
\end{equation} 
where the first map is the sharp map given by the projection on the first factor. 

Similarly, for a smooth adic space $X$ over $K$, we define $\so\mathbb{B}_{\rm inf}:= \tilde{\mu}^*\so_{\eet} \otimes_{W(k)} \mathbb{B}_{\rm inf}$, where  $\tilde{\mu}$ is the canonical projection $\tilde{\mu}: X_{\proeet} \to X_{\eet}$. 
We still have a map $\theta: \so\mathbb{B}_{\rm inf} \to \so_{\proeet}$. Then we set: 
\[ \so \mathbb{B}_{\dr}^+:= \lim_n \so{\mathbb B}_{\rm inf}/{\rm Ker}(\theta)^n, \quad   {\rm Fil}^r\so\mathbb{B}_{\dr}^+:= {\rm Ker}(\theta)^r\so\mathbb{B}_{\dr}^+. \]
Finally, for a generator $t$ of  ${\rm Fil}^1\mathbb{B}_{\dr}^+$, we take  $\so\mathbb{B}_{\dr}^+[t^{-1}]$ and  equip it with the filtration induced from $\so\mathbb{B}_{\dr}^+$. 
Let $\so\mathbb{B}_{\dr}$ be the completion of $\so\mathbb{B}_{\dr}^+[t^{-1}]$ with respect to this filtration. 
\footnote{Note that we are not using here the original definition of $\so\mathbb{B}_{\dr}$ (which was not completed with respect to the filtration), but the one due to \cite[Def.\,2.2.10, Rem.\,2.2.11]{DLLZ22}.} 

\begin{theorem}[Poincar\'e Lemma] \cite[Cor.\,6.13]{Sch13a}, \cite[Cor.\,2.4.2]{DLLZ22}
\label{PL-sch}
Let $X$ be a smooth rigid space of dimension $d$ over $K$.  Then:
\begin{enumerate}
\item  There are exact sequences of pro-\'etale sheaves on $X$: 
\begin{align}\label{ciemno1}
&0 \to \mathbb{B}^+_{\dr} \to \so\mathbb{B}^+_{\dr} \stackrel{\nabla}{\to} \so\mathbb{B}^+_{\dr} \otimes_{\tilde{\mu}^*\so} \Omega^1 \stackrel{\nabla}{\to} \cdots \stackrel{\nabla}{\to} \so\mathbb{B}^+_{\dr} \otimes_{\tilde{\mu}^*\so} \Omega^d \to 0, \\
&0 \to {\rm Fil}^r\mathbb{B}^+_{\dr} \to {\rm Fil}^r\so\mathbb{B}^+_{\dr} \stackrel{\nabla}{\to}  {\rm Fil}^{r-1}\so\mathbb{B}^+_{\dr} \otimes_{\tilde{\mu}^*\so} \Omega^1 \stackrel{\nabla}{\to}  \cdots \stackrel{\nabla}{\to} {\rm Fil}^{r-d}\so\mathbb{B}^+_{\dr} \otimes_{\tilde{\mu}^*\so} \Omega^d \to 0\notag
\end{align}
for all $r\in \Z$, where $\Omega^i:=\tilde{\mu}^*\Omega^i_{\eet}$ for $i\ge1$.  (We note that the notation here is slightly different from the one of \cite{Sch13a} since the pro-\'etale sheaf that we denote by $\so_{\proeet}$ is the completion of $\tilde{\mu}^*\so_{\eet}$.)  We have analogues of the exact sequences in \eqref{ciemno1}    for $\mathbb{B}_{\dr}$ and $\so\mathbb{B}_{\dr} $. 
\item For $r\in\Z$, the quotient complex
$$
0 \to \gr^r_{F}\mathbb{B}_{\dr} \to \gr^r_{F}\so\mathbb{B}_{\dr} \stackrel{\nabla}{\to} \gr^{r-1}_{F}\so\mathbb{B}_{\dr} \otimes_{\tilde{\mu}^*\so} \Omega^1 \stackrel{\nabla}{\to} \cdots \stackrel{\nabla}{\to} \gr^{r-d}_{F}\so\mathbb{B}_{\dr} \otimes_{\tilde{\mu}^*\so} \Omega^d \to 0
$$
is exact and can be identified with the complex 
$$
0 \to \so(r) \to \ \so\mathbb{C}(r) \stackrel{\nabla}{\to}  \so\mathbb{C}(r)\otimes_{\tilde{\mu}^*\so} \Omega^1(-1) \stackrel{\nabla}{\to} \cdots \stackrel{\nabla}{\to} \so\mathbb{C}(r) \otimes_{\tilde{\mu}^*\so} \Omega^d(-d) \to 0,
$$
where we set $ \so\mathbb{C}:=\gr^0_F\so{\mathbb B}_{\dr}$. We note that, by \cite[Ch.\,6]{Sch13a}, $\gr^i_F\so{\mathbb B}_{\dr}\simeq \so\mathbb{C}(i)$.
\end{enumerate}
\end{theorem}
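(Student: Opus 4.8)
The plan is to prove this Poincar\'e Lemma by reducing to an explicit local computation on the pro-\'etale site. The statement asserts exactness of complexes of pro-\'etale sheaves, which may be checked on a cofinal system of affinoid perfectoids. Since $X$ is smooth, \'etale-locally it admits a map to a torus (or polydisc); fixing such a toric chart supplies coordinates $T_1,\ldots,T_d$ together with compatible systems $T_i^\flat$ of $p$-power roots along the perfectoid tower. The heart of the matter is then the explicit description, due to Scholze, of the period sheaf in these coordinates: setting $u_i := T_i - [T_i^\flat] \in {\rm Fil}^1\so\mathbb{B}^+_{\dr}$, one has a local isomorphism
\[
\so\mathbb{B}^+_{\dr} \simeq \mathbb{B}^+_{\dr}[[u_1,\ldots,u_d]],
\]
under which the connection $\nabla$ sends $f$ to $\sum_i \partial_{u_i}f \otimes dT_i$, i.e.\ it is the (completed) de Rham differential in the variables $u_i$. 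I would establish this local model first, since everything else follows formally from it.

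Granting the local model, the two exact sequences in part~(1) become the de Rham (Koszul) complex of a power series ring $R[[u_1,\ldots,u_d]]$ over $R=\mathbb{B}^+_{\dr}$, with differentials $\partial_{u_i}$. Its exactness, with cohomology $R$ concentrated in degree $0$, is a purely formal computation: I would write down the standard integration homotopy $s$ which, on a monomial times $du_{i_1}\wedge\cdots$, integrates in the first available variable, and check $\nabla s + s\nabla = {\rm id}$ away from degree $0$. The filtered version is obtained from the same homotopy once one verifies that it respects the filtration ${\rm Fil}^r$ given by powers of ${\rm Ker}(\theta)$: here $u_i\in{\rm Fil}^1$, so integrating in $u_i$ raises the filtration by one, matching the shift from ${\rm Fil}^r$ to ${\rm Fil}^{r-1}$ across each application of $\nabla$.

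For part~(2), I would pass to the associated graded. Using $\gr^r_F\mathbb{B}_{\dr}\simeq\so(r)$ and $\gr^i_F\so\mathbb{B}_{\dr}\simeq\so\mathbb{C}(i)$, the graded complex becomes exactly the Higgs complex displayed in the statement, and its exactness follows either by taking $\gr$ of the filtered resolution from part~(1) or, more transparently, by recognising it as the Koszul complex of $\so\mathbb{C}$ on the symbols $\gr(u_i)$. The hard part will be convergence: the integration homotopy involves division by integers together with increasing powers of the $u_i$, so one must verify that it converges in the filtration topology and preserves the completed sheaf. This is precisely why the definition of $\so\mathbb{B}_{\dr}$ must be the filtration-completed one (cf.\ the footnote and \cite[Def.\,2.2.10]{DLLZ22}), and it is the step demanding the most care; once convergence is secured, the remaining algebra is formal.
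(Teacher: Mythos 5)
The paper gives no proof of this theorem: it is imported verbatim from the literature, namely \cite[Cor.\,6.13]{Sch13a} together with \cite[Cor.\,2.4.2]{DLLZ22} (the latter handling the filtration-completed $\so\mathbb{B}_{\dr}$ of the footnote). Your sketch is correct and reconstructs exactly the argument of those cited sources --- reduction to toric charts, Scholze's local description $\so\mathbb{B}^+_{\dr}\simeq\mathbb{B}^+_{\dr}[[u_1,\ldots,u_d]]$ with $u_i=T_i-[T_i^\flat]$, the formal integration homotopy respecting the filtration, and passage to the associated graded for part~(2) --- so it is essentially the same approach.
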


  We denote by $\epsilon$ the map $\gr^0_{F}\mathbb{B}_{\dr} \to \gr^0_{F}\so\mathbb{B}_{\dr}$.  Thus we have the exact sequence
$$
0\to \so\lomapr{\epsilon}  \gr^0_{F}\so\mathbb{B}_{\dr}\to \coker \varepsilon\to 0.
$$
This sequence projects to the exact sequence on $X_{C,\eet}$ (here $\mu: X_{C,\proeet}\to X_{C,\eet}$ is the canonical projection)
\begin{equation}\label{tea12}
0\to \mu_*(\so)\lomapr{\epsilon}  \mu_*(\gr^0_{F}\so\mathbb{B}_{\dr})\to \mu_*(\coker \varepsilon)\to \R^1\mu_*(\so).
\end{equation}
 Since the maps
$$
\mu_*(\coker \varepsilon)\lomapr{\nabla} \mu_*( \so\mathbb{C}\otimes_{\tilde{\mu}^*\so} \Omega^1(-1))\stackrel{}{\leftarrow} \Omega^1_{X_C}(-1)
$$
are isomorphisms (see the proof of Lemma \ref{sniad3}(\ref{sniad3-1}) for details),  they yield 
an isomorphism $\beta: {\mu}_*(\coker \varepsilon) \stackrel{\sim}{\to} \Omega_{X_C}^1(-1).$ 
We denote by ${\rm PL}^{-1}$ the composition of $\beta^{-1}$ and the connecting morphism in \eqref{tea12}: 
\[ {\rm PL}^{-1}: \Omega_{X_C}^1(-1) \to \R^1 {\mu}_* (\mathbb{B}^+_{\dr}/t). \]
It will follow from  Lemma \ref{sniad3}(\ref{sniad3-1}) below that ${\rm PL}^{-1}$ is an isomorphism and we write ${\rm PL}$ for  its inverse.  

The above construction can be made more explicit. Theorem~\ref{PL-sch} has the following useful corollary (see \cite[Rem.\,3.18]{LB18} or \cite[(6.4), (6.5)]{Bos21} for a more general statement): 

\begin{proposition}\label{revis1}
Let $X$ be a smooth rigid space of dimension $d$ over $K$ and let $r\in\Z$. Then we have the following quasi-isomorphisms on $X_{C, \eet}$ (the differentials are $\B_{\dr}$-linear):\begin{align*}
&\R{\mu}_*(\mathbb{B}_{\dr}) \stackrel{\sim}{\leftarrow} \big(\so {\otimes}^{\Box}_K \bdr \stackrel{d}{\to} \Omega^1 {\otimes}^{\Box}_K \bdr \stackrel{d}{\to} \cdots \stackrel{d}{\to} \Omega^d {\otimes}^{\Box}_K \bdr  \big) \notag \\
&\R{\mu}_*({\rm Fil}^r\mathbb{B}_{\dr})\stackrel{\sim}{\leftarrow}\big(\so {\otimes}^{\Box}_K {\rm Fil}^r\bdr \stackrel{d}{\to }\Omega^1 {\otimes}^{\Box}_K {\rm Fil}^{r-1}\bdr  \stackrel{d}{\to} \cdots \stackrel{d}{\to} \Omega^d {\otimes}^{\Box}_K {\rm Fil}^{r-d}\bdr  \big).
\end{align*}
\end{proposition}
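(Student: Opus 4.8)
The plan is to feed the de Rham resolutions of Theorem~\ref{PL-sch}(1) into $\R\mu_*$ and to reduce the whole computation to that of $\R\mu_*$ of the single period sheaf $\so\mathbb{B}_{\dr}$ (resp.\ its filtration steps). By Theorem~\ref{PL-sch}(1) there is a quasi-isomorphism of complexes of pro-\'etale sheaves on $X_C$
\[ \mathbb{B}_{\dr}\xrightarrow{\ \sim\ }\big(\so\mathbb{B}_{\dr}\xrightarrow{\nabla}\so\mathbb{B}_{\dr}\otimes_{\tilde{\mu}^*\so}\Omega^1\xrightarrow{\nabla}\cdots\xrightarrow{\nabla}\so\mathbb{B}_{\dr}\otimes_{\tilde{\mu}^*\so}\Omega^d\big), \]
and similarly ${\rm Fil}^r\mathbb{B}_{\dr}$ is resolved by the complex whose degree-$j$ term is ${\rm Fil}^{r-j}\so\mathbb{B}_{\dr}\otimes_{\tilde{\mu}^*\so}\Omega^j$. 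Applying $\R\mu_*$ and using that $\R\mu_*$ of a bounded complex is computed by the hypercohomology spectral sequence, the statement will follow once I establish: (a) the higher direct images of each term vanish; (b) the degree-zero direct images are as in the claimed complexes; and (c) the differential induced by $\nabla$ is the $\bdr$-linear extension of the de Rham differential of $X/K$.

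The heart of the matter is the computation
\[ \R\mu_*\,\so\mathbb{B}_{\dr}\simeq\so_{\eet}{\otimes}^{\Box}_K\bdr,\qquad \R\mu_*\,{\rm Fil}^s\so\mathbb{B}_{\dr}\simeq\so_{\eet}{\otimes}^{\Box}_K{\rm Fil}^s\bdr, \]
both concentrated in degree $0$. Granting this, since $\Omega^i=\tilde{\mu}^*\Omega^i_{\eet}$ is the pullback of a locally free coherent $\so_{\eet}$-module, the projection formula gives
\[ \R\mu_*\big(\so\mathbb{B}_{\dr}\otimes_{\tilde{\mu}^*\so}\Omega^i\big)\simeq\big(\so_{\eet}{\otimes}^{\Box}_K\bdr\big)\otimes_{\so_{\eet}}\Omega^i_{\eet}=\Omega^i_{\eet}{\otimes}^{\Box}_K\bdr, \]
again in degree $0$; hence in the hypercohomology spectral sequence only the row $q=0$ survives, $\R\mu_*$ passes through the de Rham complex term by term, and one recovers exactly the complexes in the statement (and likewise in the filtered case). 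This settles (a) and (b).

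To prove the displayed computation I would argue through the filtration. The filtration on ${\rm Fil}^s\so\mathbb{B}_{\dr}$ is complete and separated, with graded pieces $\gr^r\so\mathbb{B}_{\dr}\simeq\so\mathbb{C}(r)$ by Theorem~\ref{PL-sch}(2). The Poincar\'e/Sen complex of Theorem~\ref{PL-sch}(2), combined with the Hodge--Tate isomorphism $\R^i\mu_*\so\simeq\Omega^i_{\eet}(-i)$ of Proposition~\ref{omega}, shows that $\R\mu_*\so\mathbb{C}(r)$ is concentrated in degree $0$: the extra ``Poincar\'e directions'' present in $\so\mathbb{C}$ kill the higher cohomology that $\so$ itself carries, which is the cohomological incarnation of the Poincar\'e Lemma. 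Reassembling along the complete filtration then yields the solid tensor products above.

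Finally, for (c), the connection $\nabla$ is $\mathbb{B}_{\dr}$-linear, so on $\mu_*\so\mathbb{B}_{\dr}=\so_{\eet}{\otimes}^{\Box}_K\bdr$ it annihilates the period factor $\bdr$ and restricts to $d\otimes{\rm id}$ on $\so_{\eet}$; this is precisely the de Rham differential of $X/K$ extended $\bdr$-linearly, matching the complexes written in the statement. The main obstacle is the computation of $\R\mu_*\so\mathbb{B}_{\dr}$ together with the bookkeeping of the solid tensor product ${\otimes}^{\Box}_K$: one must ensure that the filtration spectral sequence degenerates, that all graded pieces have vanishing higher direct images so that no $\R^1\varprojlim$ obstruction appears, and that the reassembled degree-zero term is the genuine completed tensor rather than an uncompleted variant. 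This is exactly the input isolated in \cite[Rem.\,3.18]{LB18} and \cite[(6.4),(6.5)]{Bos21}, on which I would rely for the delicate functional-analytic estimates.
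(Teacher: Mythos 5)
Your proposal follows, in outline, exactly the route behind the paper's treatment: the paper offers no independent proof of Proposition~\ref{revis1}, presenting it as a corollary of the Poincar\'e Lemma (Theorem~\ref{PL-sch}) and citing \cite[Rem.\,3.18]{LB18} and \cite[(6.4),\,(6.5)]{Bos21}, whose arguments are precisely your steps (a)--(c): push the $\so\mathbb{B}_{\dr}$-resolution through $\R\mu_*$, compute $\R\mu_*$ of the period sheaf itself by d\'evissage along the filtration, use the projection formula for the locally free twists $\Omega^i$, and identify the induced differential with $d\otimes\mathrm{id}$.

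There is, however, a genuine gap at the one step that carries all the content: your claim that $\R\mu_*\so\mathbb{C}(r)$ is concentrated in degree $0$ ``follows from Theorem~\ref{PL-sch}(2) combined with Proposition~\ref{omega}.'' No such formal implication exists. Theorem~\ref{PL-sch}(2) resolves $\so(r)$ by the sheaves $\so\mathbb{C}(r)\otimes_{\tilde{\mu}^*\so}\Omega^p(-p)$, so together with the projection formula it produces a hypercohomology spectral sequence
\begin{equation*}
E_1^{p,q}=\R^q\mu_*\bigl(\so\mathbb{C}(r)\bigr)\otimes_{\so}\Omega^p_{X_C}(-p)\ \Longrightarrow\ \R^{p+q}\mu_*\so(r),
\end{equation*}
and knowing the abutment via Proposition~\ref{omega} constrains the $E_1$-page but in no way forces $E_1^{p,q}=0$ for $q>0$: large higher direct images cancelling against each other in the differentials are not excluded by any formal argument. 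The logic runs in the opposite direction from what you wrote --- it is the degree-$0$ concentration of $\R\mu_*\so\mathbb{C}(r)$ (together with the identification $\mu_*\so\mathbb{C}(r)\simeq\so_{X_C}(r)$, which you never state but need for the reassembly to produce $\so\otimes^{\Box}_K\bdr$ rather than something larger) that allows one to compute pushforwards of $\Bdr$-sheaves from the Poincar\'e resolution, not conversely. That concentration is exactly Scholze's local computation \cite[Prop.\,6.16]{Sch13a} --- which this paper itself invokes in the proof of Lemma~\ref{sniad3} --- and it requires real work on toric charts: weight-space decomposition of the completed structure sheaf of the perfectoid cover, invertibility of $\gamma-1$ on nonzero weights, and surjectivity of $\gamma-1$ on the weight-zero part after adjoining the Sen variables of $\so\mathbb{C}$ (your ``extra Poincar\'e directions'' intuition is the correct mechanism, but it is a computation, not a consequence of the two quoted statements). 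Accordingly, your closing deferral to \cite{LB18}, \cite{Bos21} for ``delicate functional-analytic estimates'' mislabels what you are importing: those references (or \cite[Prop.\,6.16]{Sch13a}) supply the key vanishing itself, not merely convergence bookkeeping. With that input cited as such, and with the $\R^1\lim$/completeness issues you already flag (plus commuting $\R\mu_*$ with the colimit $\mathbb{B}_{\dr}=\colim_r t^{-r}\mathbb{B}^+_{\dr}$), your argument does close up and agrees with the cited proofs.
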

 Here the ${\Box}$ refers to the solid tensor product defined by Clausen-Scholze, see~\cite{Sch19} and we regard $\Omega^i\otimes^{\Box}_K {\rm Fil}^j\bdr$ as sheaves on $X_{C,\eet}$ via the equivalence $X_{C,\eet}\simeq \lim_{[K^{\prime}:K]<\infty}X_{K^{\prime},\eet}$ (see \cite[Sec.\,3.2]{LB18} for more details).
 The  quasi-isomorphisms in Proposition \ref{revis1} are topological, i.e., more specifically, they are quasi-isomorphisms in the $\infty$-derived category of sheaves with values in solid $K$-modules. We will denote this category by $\sd(X_{\eet},K_{\Box})$.
In particular, it follows from this proposition, that the pro-\'etale cohomologies of $\mathbb{B}_{\dr}/\mathbb{B}_{\dr}^+$ and $\mathbb{B}^+_{\dr}/t$ are computed by the following complexes on $X_{C,\eet}$: 
\begin{align}
\label{poincareBdr}
&\R{\mu}_*(\mathbb{B}_{\dr}/\mathbb{B}_{\dr}^+) \stackrel{\sim}{\leftarrow} \big(\so {\otimes}^{\Box}_K (\bdr/\bdr^+)\stackrel{d}{\to} \Omega^1 {\otimes}^{\Box}_K( \bdr/t^{-1}\bdr^+) \stackrel{d}{\to} \cdots \stackrel{d}{\to} \Omega^d {\otimes}^{\Box}_K (\bdr/t^{-d}\bdr^+)  \big), \\
&\R{\mu}_*(\mathbb{B}^+_{\dr}/t) \stackrel{\sim}{\leftarrow} \big(\so_{X_C} \xrightarrow{0} \Omega_{X_C}^1(-1) \xrightarrow{0} \cdots \xrightarrow{0} \Omega_{X_C}^d(-d)  \big). \notag
\end{align}
The maps
\begin{equation}\label{sniad33}
{\rm PL}: \quad \R{\mu}_*(\mathbb{B}^+_{\dr}/t)\to  \Omega^i_{X_C}(-i)[-i],\quad {\rm PL}: \quad H^i_{\proeet}(X_C,\mathbb{B}^+_{\dr}/t)\to  \Omega_{X_C}^i(X_C)(-i)
\end{equation}
are given by  the canonical projections.

\subsubsection{Stein spaces}\label{bStein}

We now apply the above computations to the case when $X$ is a Stein space defined over $K$.  
The quasi-isomorphism  \eqref{poincareBdr} yields the quasi-isomorphism 
\begin{align}\label{sniad2}
\R\Gamma_{\proeet}(X_C,\mathbb{B}^+_{\dr}/t) \simeq \big(\so_{X_C}(X_C) \xrightarrow{0} \Omega_{X_C}^1(X_C) (-1) \xrightarrow{0} \cdots \xrightarrow{0} \Omega_{X_C}^d(X_C) (-d)  \big). 
\end{align}

The case of $\mathbb{B}_{\dr}/\mathbb{B}_{\dr}^+$ is a bit subtler because of the usual problems with topological tensor products and limits, respectively colimits. Let $\{X_n\},\,n\in\N$, be a strictly increasing dagger affinoid covering of $X$ (i.e., we have $X_n\Subset X_{n+1}$: the adic closure of $X_n$ is contained in $X_{n+1}$). Then we have the following lemma: 

\begin{lemma}
With the above notations, there is an exact sequence
\begin{equation}\label{sleepy3}
0\to \lim_n(H^i_{\dr}(X_n)\otimes^{\Box}_K(\bdr/t^{-i}\bdr^+))\to H^i_{\proeet}(X_{C},\mathbb{B}_{\dr}/\mathbb{B}_{\dr}^+)\xrightarrow{\pi}  (\Omega_{X_C}^i(X_C)/{\rm Ker}\, d)(-i-1)\to 0. 
\end{equation}
\end{lemma}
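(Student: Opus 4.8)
The plan is to compute $\R\Gamma_{\proeet}(X_C,\mathbb{B}_{\dr}/\mathbb{B}_{\dr}^+)$ directly from the Poincar\'e complex of Proposition~\ref{revis1}, to filter it by the Hodge ($t$-adic) filtration on $\bdr$, and to identify the two terms of \eqref{sleepy3} with, respectively, the ``interior'' and the ``top'' graded pieces of that filtration.

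First I would pass to global sections. Since $X$ is Stein, the higher \'etale cohomology of any coherent sheaf vanishes, so applying $\R\Gamma_{\eet}(X,-)$ to \eqref{poincareBdr} identifies $\R\Gamma_{\proeet}(X_C,\mathbb{B}_{\dr}/\mathbb{B}_{\dr}^+)$, in $\sd(X_{\eet},K_{\Box})$, with the complex $C^{\jcdot}$ whose $j$-th term is $\Omega^j(X_C)\otimes^{\Box}_K(\bdr/t^{-j}\bdr^+)$ and whose differential is the de Rham differential followed by the projection $\bdr/t^{-j}\bdr^+\twoheadrightarrow\bdr/t^{-j-1}\bdr^+$. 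This is the exact analogue of \eqref{sniad2} with $\mathbb{B}_{\dr}^+/t$ replaced by $\mathbb{B}_{\dr}/\mathbb{B}_{\dr}^+$.

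Next I would exploit the $t$-adic filtration. Writing $M_j:=\bdr/t^{-j}\bdr^+\simeq\bigoplus_{s\le -j-1}C(s)$, with $\gr^s M_j\simeq C(s)$ nonzero exactly for $s\le -j-1$, the two constituents of the differential respect the $t$-grading (the de Rham differential because it is $\bdr$-linear, the projection because it preserves the $t$-adic degree), so the differential of $C^{\jcdot}$ is homogeneous for this grading. Its $s$-th graded summand is therefore the de Rham complex of $X_C$ truncated in cohomological degree $-s-1$ — the projection inducing an isomorphism on $\gr^s$ for $s\le -j-2$ and zero for $s=-j-1$ — and Tate-twisted by $s$, i.e. $\big[\Omega^0(X_C)\to\cdots\to\Omega^{-s-1}(X_C)\big](s)$; this is the global incarnation of the graded Poincar\'e lemma of Theorem~\ref{PL-sch}(2). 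Consequently, for $s\le -i-2$ the degree-$i$ cohomology of this graded piece is a twisted de Rham group, whereas for $s=-i-1$ the truncation falls precisely in degree $i$ and gives $(\Omega^i(X_C)/\im d)(-i-1)$, sitting in the tautological short exact sequence with kernel $H^i_{\dr}(X_C)(-i-1)$ and cokernel $(\Omega^i(X_C)/{\rm Ker}\,d)(-i-1)$. Collecting the de Rham contributions over all $s\le -i-1$ and recombining them through $\bdr/t^{-i}\bdr^+\simeq\bigoplus_{s\le -i-1}C(s)$ yields the first term of \eqref{sleepy3}, while the single leftover cokernel at $s=-i-1$ yields the quotient; the map $\pi$ is then the projection of a cocycle onto its leading ($\gr^{-i-1}$) coefficient modulo ${\rm Ker}\,d$.

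The step I expect to be the main obstacle is the topological bookkeeping, which is also why the statement retains $\lim_n$ rather than writing $H^i_{\dr}(X)$. Concretely, $\Omega^{\jcdot}(X_C)=\lim_n\Omega^{\jcdot}(X_{n,C})$ is only an inverse limit of the dagger-affinoid de Rham complexes, so the clean ``graded $=$ de Rham'' identification must be carried out levelwise and only then passed to the limit: one writes $C^{\jcdot}=\R\lim_n C_n^{\jcdot}$, splits each $C_n^{\jcdot}$ by $t$-degree (legitimate over a dagger affinoid, where the groups $H^i_{\dr}(X_n)$ are well behaved), and afterwards applies $\R\lim_n$. The interchange of cohomology with this derived limit and with the solid tensor product $\otimes^{\Box}_K$ is what produces the factor $\lim_n\big(H^i_{\dr}(X_n)\otimes^{\Box}_K(\bdr/t^{-i}\bdr^+)\big)$, the relevant $\lim^1$-terms vanishing by Mittag--Leffler because the transition maps attached to $X_n\Subset X_{n+1}$ are compact. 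Verifying this vanishing, together with the exactness of $\otimes^{\Box}_K$ on the pieces at hand — that is, taming the ``usual problems with topological tensor products and limits'' flagged before the lemma — is the technical heart of the argument.
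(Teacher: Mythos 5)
Your final plan (the third paragraph) is essentially the paper's proof: replace $X$ by the dagger affinoid covering $\{X_n\}$, compute $H^i_{\proeet}(X_{n,C},\mathbb{B}_{\dr}/\mathbb{B}_{\dr}^+)$ from the explicit Poincar\'e-lemma complex to get a levelwise short exact sequence, then pass to $\lim_n$, killing the $\R^1\lim$-terms by topological Mittag-Leffler (here the finiteness of $H^i_{\dr}(X_n)$ over $K$, from Grosse-Kl\"onne, is what makes the pro-system tame) and using $\R^1\lim_n(\Omega^i(X_{n,C})/\kker\,d)=0$ to get both exactness on the right and $H^i_{\proeet}(X_C,-)\simeq\lim_n H^i_{\proeet}(X_{n,C},-)$. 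Note that your first paragraph, taken at face value, is exactly the trap the paper warns about: over the Stein space itself, taking global sections does not commute with $\otimes^{\Box}_K$, so the $j$-th term of the global complex is \emph{not} $\Omega^j(X_C)\otimes^{\Box}_K(\bdr/t^{-j}\bdr^+)$; you correctly retract this in the last paragraph, and only the levelwise identification is actually used.

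The one place where you genuinely deviate from the paper is the levelwise computation, and it is also the one step you leave unjustified: you split $\bdr/t^{-j}\bdr^+$ as $\bigoplus_{s\le -j-1}C(s)$ and decompose the complex by $t$-degree. Such a splitting of the $t$-adic filtration is \emph{not} canonical: it does not exist Galois-equivariantly (already $0\to C(1)\to \bdr^+/t^2\to C\to 0$ admits no $\sg_K$-equivariant splitting), and even solid-$K$-linearly it is a choice --- it exists because surjections of $K$-Banach spaces split $K$-linearly ($K$ being discretely valued), and the splittings must be chosen compatibly in $j$ so that the projections $\bdr/t^{-j}\bdr^+\to\bdr/t^{-j-1}\bdr^+$ become graded maps. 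Since the lemma asserts an exact sequence with specified maps (in particular $\pi$), you would also need to check independence of these choices. The paper avoids all of this by computing canonically: ${\rm Im}\,d_{i-1}\simeq {\rm Im}\,d\otimes^{\Box}_K(\bdr/t^{-i}\bdr^+)$, and $\kker\,d_i$ is an extension of $(\Omega^i(X_n)/\kker\,d)(-i-1)$ by $\Omega^i(X_n)^{d=0}\otimes^{\Box}_K(\bdr/t^{-i}\bdr^+)$, using only the canonical subobject $C(-i-1)=\kker\bigl(\bdr/t^{-i}\bdr^+\to\bdr/t^{-i-1}\bdr^+\bigr)$; these two short exact sequences combine into the levelwise sequence~\eqref{sleepy1}. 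If you want to keep your graded picture, the clean way is to work with the exhaustive increasing $t$-adic filtration itself (whose graded pieces are exactly the twisted, stupidly truncated de Rham complexes you describe) and a d\'evissage or colimit argument, rather than with a chosen direct-sum decomposition.
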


\begin{proof}
The dagger analogue of  \eqref{poincareBdr} yields a quasi-isomorphism
\begin{align}\label{small1}
\R\Gamma_{\proeet}(X_{n,C},\mathbb{B}_{\dr}/\mathbb{B}_{\dr}^+) \simeq& \big(\so(X_n) {\otimes}^{\Box}_K (\bdr/\bdr^+) \to \Omega^1(X_n) {\otimes}^{\Box}_K (\bdr/t^{-1}\bdr^+) \to \cdots\\\nonumber
&\qquad\quad\cdots \to \Omega^d(X_n) {\otimes}^{\Box}_K (\bdr/t^{-d}\bdr^+)  \big).
\end{align}
Note that the left hand side in~\eqref{small1} is computed via the formula from \cite[Lem.\,3.23]{Bos23}.  
Passing to the limit over $n$ we obtain a quasi-ismorphism
\begin{align*}
\R\Gamma_{\proeet}(X_{C},\mathbb{B}_{\dr}/\mathbb{B}_{\dr}^+) \simeq& \R\lim_n\big(\so(X_n) {\otimes}^{\Box}_K (\bdr/\bdr^+) \to \Omega^1(X_n) {\otimes}^{\Box}_K (\bdr/t^{-1}\bdr^+) \to \cdots\\
&\qquad\quad\cdots \to \Omega^d(X_n) {\otimes}^{\Box}_K (\bdr/t^{-d}\bdr^+)  \big).
\end{align*}
We used here the fact that $\R\Gamma_{\proeet}(X_{C},\mathbb{B}_{\dr}/\mathbb{B}_{\dr}^+)\stackrel{\sim}{\to}\R\lim_n\R\Gamma_{\proeet}(X_{n,C},\mathbb{B}_{\dr}/\mathbb{B}_{\dr}^+)$. 

For $i,n \geq 0$, we have
\begin{align*}
& H^i_{\proeet}(X_{n,C},\mathbb{B}_{\dr}/\mathbb{B}_{\dr}^+) ={\rm Ker }\,d_i/{\rm Im}\, d_{i-1},\\
&  \Omega^{i-1}(X_n) {\otimes}^{\Box}_K (\bdr/t^{-i+1}\bdr^+) \lomapr{d_{i-1}} \Omega^i(X_n) {\otimes}^{\Box}_K (\bdr/t^{-i}\bdr^+) \lomapr{d_i} \Omega^{i+1}(X_n) {\otimes}^{\Box}_K (\bdr/t^{-i-1}\bdr^+). 
\end{align*}
This yields an  exact sequence and an isomorphism
\begin{align*}
& 0\to \Omega^i(X_n)^{d=0}\otimes^{\Box}_K(\bdr/t^{-i}\bdr^+)\to {\rm Ker}\, d_i\to (\Omega_{X_{n,C}}^i(X_{n,C})/{\rm Ker}\,d)(-i-1)\to 0,\\
& {\rm Im }\,d_{i-1}\simeq {\rm Im}\,d\otimes^{\Box}_K(\bdr/t^{-i}\bdr^+).
\end{align*}
Putting them together we obtain  the exact sequence
\begin{equation}\label{sleepy1}
0\to H^i_{\dr}(X_n)\otimes^{\Box}_K(\bdr/t^{-i}\bdr^+)\to H^i_{\proeet}(X_{n,C},\mathbb{B}_{\dr}/\mathbb{B}_{\dr}^+)\to  (\Omega_{X_{n,C}}^i(X_{n,C})/{\rm Ker}\, d)(-i-1)\to 0
\end{equation}
We note here that $H^i_{\dr}(X_n)$ is of finite rank over $K$ (see~\cite{GK02}). 
Passing to the limit over $n$ we get the exact sequence
$$
0\to \lim_n(H^i_{\dr}(X_n)\otimes^{\Box}_K(\bdr/t^{-i}\bdr^+))\to\lim_n H^i_{\proeet}(X_{n,C},\mathbb{B}_{\dr}/\mathbb{B}_{\dr}^+)\to  (\Omega_{X_C}^i(X_C)/{\rm Ker}\, d)(-i-1)\to 0.
$$
The exactness on the right follows from the fact that $\R^1 \lim_n(H^i_{\dr}(X_n)\otimes^{\Box}_K(\bdr/t^{-i}\bdr^+))=0$ because the pro-system $\{ H^i_{\dr}(X_n)\otimes^{\Box}_K(\bdr/t^{-i}\bdr^+)\}_{n\in\N}$ is topologically Mittag-Leffler. We note that, 
since $\R^1\lim_n (\Omega_{X_{n,C}}^i(X_{n,C})/{\rm Ker}\, d)=0$, we also have 
$$
H^i_{\proeet}(X_{C},\mathbb{B}_{\dr}/\mathbb{B}_{\dr}^+)\stackrel{\sim}{\to}H^i(\R\lim_n \R\Gamma_{\proeet}(X_{n,C},\mathbb{B}_{\dr}/\mathbb{B}_{\dr}^+))\simeq \lim_n H^i_{\proeet}(X_{n,C},\mathbb{B}_{\dr}/\mathbb{B}_{\dr}^+).
$$
This concludes the proof of the lemma. 
\end{proof}

\begin{remark}\label{overC1}
If $X$ is a variety defined over $C$ that does not necessarily come from the base change of a variety defined over $K$, we still get the maps ${\rm PL} $ and  $\pi$ from~\eqref{sniad33} and~\eqref{sleepy3}: 
\begin{align*}
{\rm PL}: & \quad H^i_{\proeet}(X,\mathbb{B}^+_{\dr}/t)\to  \Omega^i(X)(-i),\\
\pi:  &\quad H^i_{\proeet}(X,\mathbb{B}_{\dr}/\mathbb{B}_{\dr}^+)  \to (\Omega^i(X)/{\rm Ker}\, d)(-i-1).
\end{align*}
 Indeed, for a covering $\{X_n\}_n$ as above, since the $X_n$'s are dagger affinoids, they are defined over finite extensions $K_n$ of $K$ and we still have the exact sequence~\eqref{sleepy1} (replacing $K$ by the $K_n$ for each $n\in \N$). By taking  limits over $n$, we obtain the map $\pi$:
\begin{align*}
\pi:  H^i_{\proeet}(X,\mathbb{B}_{\dr}/\mathbb{B}_{\dr}^+) & \to \lim_nH^i_{\proeet}(X_{n},\mathbb{B}_{\dr}/\mathbb{B}_{\dr}^+)\to \lim_n (\Omega^i(X_{n})/{\rm Ker}\, d)(-i-1)\\
 & \stackrel{\sim}{\leftarrow} (\Omega^i(X)/{\rm Ker}\, d)(-i-1).
 \end{align*}
 
 Similarly for the map ${\rm PL}$. We have a dagger analogue of \eqref{sniad2}: 
 \begin{align*}
\R\Gamma_{\proeet}(X_{n},\mathbb{B}^+_{\dr}/t) \simeq \big(\so(X_{n,K_n}) {\otimes}^{\Box}_{K_n} C \xrightarrow{0} \Omega^1(X_{n,K_n}) {\otimes}^{\Box}_{K_n} C(-1) \xrightarrow{0} \cdots \xrightarrow{0} \Omega^d(X_{n,K_n}) {\otimes}^{\Box}_{K_n} C(-d)  \big). 
\end{align*}
This yields the maps ${\rm PL}_n:  H^i_{\proeet}(X_n,\mathbb{B}^+_{\dr}/t)\to  \Omega^i(X_n)(-i)$. Passing to the limit over $n$, we get the map
$$
{\rm PL}:  \quad H^i_{\proeet}(X,\mathbb{B}^+_{\dr}/t)\to \lim_nH^i_{\proeet}(X_n,\mathbb{B}^+_{\dr}/t)\lomapr{{\rm PL}_n}  \lim_n\Omega^i(X_n)(-i)\stackrel{\sim}{\leftarrow} \Omega^i(X)(-i).
$$
\end{remark}  

\subsubsection{Hodge-Tate morphism revisited}
\label{faltings-ext}

Let $X$ be a smooth rigid analytic space over $K$. 
A consequence of Theorem~\ref{PL-sch}  is the following short exact sequence of pro-\'etale sheaves (called Faltings extension) on $X_{\proeet}$: 
\[ 0 \to \so(1) \to {\rm gr}^1_F\so\mathbb{B}_{\dr}^+ \to \so \otimes_{\tilde{\mu}^*\so} \Omega^1 \to 0, \]
which yields the boundary map 
\begin{equation}\label{break1}
 \so \otimes_{\tilde{\mu}^*\so} \Omega^1\to  \so(1)[1].
\end{equation}
Then the Hodge-Tate morphism ${\rm HT}$ from Proposition~\ref{omega}  in degree $1$ is given  
as the twist by $(-1)$ 
of the inverse of the projection of the map \eqref{break1} from the pro-\'etale site of $X_C$ to the \'etale site of $X_C$ 
$$
\partial_{\B^+}:\quad
\Omega_{X_C}^1\stackrel{\sim}{\to}  \R^1{{\mu}_*}\so(1).
$$
Via  wedge and cup products the map $\partial_{\B^+}$ induces a quasi-isomorphism
$$
\partial^i_{\B^+}:\quad
\Omega_{X_C}^i\stackrel{\sim}{\to}  \R^i{{\mu}_*}\so(i).
$$
Here we use the fact that $\wedge^i \R^1{{\mu}_*}\so(1) \stackrel{\sim}{\to}\R^i{{\mu}_*}\so(i)$  (see the proof of \cite[Lem.\,3.24]{Sch13b}).  The Hodge-Tate morphism ${\rm HT}$ in degree $i$  is given by the $(-i)$-twist of the inverse of $\partial^i_{\B^+}$.

We have the following result: 

\begin{lemma}\label{sniad3}
\begin{enumerate}
\item\label{sniad3-1} Let $X$ be a smooth rigid analytic space over $K$.  Let $i\geq 1$. 
There is a commutative diagram of sheaves on $X_{C,\eet}$
\[ \xymatrix{ \R^i\mu_* \so \ar[r]^{\rm HT} & \Omega^i_{X_C}(-i)   \ar[dl]^{{\rm PL}^{-1}}\\
\R^i\mu_*( \mathbb{B}_{\dr}^+/t) \ar[u]^{\theta}_{\wr}}. \] 
In particular, ${\rm PL}^{-1}$ is an isomorphism. 
\item\label{sniad3-2} Let $X$ be a smooth Stein  space over $C$.  Let $i\geq 1$. 
There is a commutative diagram 
\[ \xymatrix{ H^i_{\proeet}(X, \so) \ar[r]^{\rm HT}_{\sim} & \Omega_X^i(X)(-i)   \\
H^i_{\proeet}(X, \mathbb{B}_{\dr}^+/t) \ar[u]^{\theta}_{\wr}. \ar[ur]_{{\rm PL}} 
}\] 
In particular, ${\rm PL}$ is an isomorphism. 
\end{enumerate}
\end{lemma}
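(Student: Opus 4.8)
The plan is to establish the sheaf-level statement first and then pass to cohomology to obtain the global Stein statement. Unwinding the definitions, the commutativity of the first triangle is the assertion that ${\rm PL}={\rm HT}\circ\theta$ as maps $\R^i\mu_*(\mathbb{B}_{\dr}^+/t)\to\Omega^i_{X_C}(-i)$ (equivalently $\theta\circ{\rm PL}^{-1}={\rm HT}^{-1}$): the canonical projection coming from the Poincar\'e Lemma resolution~\eqref{poincareBdr} of $\mathbb{B}_{\dr}^+/t$, transported through the isomorphism $\theta$, must agree with the inverse Hodge--Tate map $\partial^i_{\B^+}(-i)$. All three maps ${\rm HT}$, ${\rm PL}$, $\theta$ are compatible with the cup-product/wedge structures: for ${\rm HT}$ because $\partial^i_{\B^+}=\wedge^i\partial_{\B^+}$ together with the isomorphism $\wedge^i\R^1\mu_*\so\xrightarrow{\sim}\R^i\mu_*\so$, for ${\rm PL}$ because the resolution in Theorem~\ref{PL-sch} is multiplicative, and for $\theta$ because $\theta\colon\mathbb{B}_{\dr}^+/t\xrightarrow{\sim}\so$ is a morphism of sheaves of rings. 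Hence I would first reduce the whole statement to the case $i=1$, the degree-$i$ triangle being the $\wedge^i$ of the degree-$1$ one.

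For $i=1$, I would compare the two connecting maps directly. Recall that ${\rm HT}^{-1}$ is the $(-1)$-twist of the boundary map $\partial_{\B^+}$ of the Faltings extension $0\to\so(1)\to{\rm gr}^1_F\so\mathbb{B}_{\dr}^+\to\so\otimes_{\tilde{\mu}^*\so}\Omega^1\to 0$, while ${\rm PL}^{-1}=\delta\circ\beta^{-1}$, with $\delta$ the connecting map of the cokernel sequence $0\to\so\xrightarrow{\epsilon}{\rm gr}^0_F\so\mathbb{B}_{\dr}\to\coker\epsilon\to 0$ underlying~\eqref{tea12} and $\beta\colon\mu_*(\coker\epsilon)\xrightarrow{\sim}\Omega^1_{X_C}(-1)$ the isomorphism induced by $\nabla$; the subsheaf $\so$ there is ${\rm gr}^0_F\mathbb{B}_{\dr}=\mathbb{B}_{\dr}^+/t$ under $\theta$, which is why $\delta$ takes values in $\R^1\mu_*(\mathbb{B}_{\dr}^+/t)$. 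The key observation is that multiplication by $t^{-1}$ inside $\so\mathbb{B}_{\dr}$ realizes an isomorphism of short exact sequences between the $(-1)$-twist of the Faltings extension and the cokernel sequence: it carries ${\rm gr}^1_F\so\mathbb{B}_{\dr}^+(-1)$ onto ${\rm gr}^0_F\so\mathbb{B}_{\dr}=\so\mathbb{C}$ (using ${\rm gr}^i_F\so\mathbb{B}_{\dr}\simeq\so\mathbb{C}(i)$), the twisted subsheaf $\so(1)(-1)=\so$ onto the constant subsheaf $\epsilon(\so)$, and, since both the Faltings quotient map and $\beta$ are induced by the same connection $\nabla$, the quotient $\so\otimes\Omega^1(-1)$ onto $\coker\epsilon$ compatibly with $\beta$. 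By functoriality of connecting maps this yields $\theta\circ\delta=\partial_{\B^+}(-1)\circ\beta$, i.e.\ ${\rm PL}={\rm HT}\circ\theta$ in degree $1$. Keeping track of the Tate twists and checking the compatibility of the two $\nabla$-induced quotient identifications is the main point; once it is settled, the asserted bijectivity of ${\rm PL}^{-1}$ is immediate since ${\rm HT}$ and $\theta$ are isomorphisms.

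Finally, I would deduce the second assertion from the first by taking global sections over the Stein space $X$. Both $\R^i\mu_*\so\simeq\Omega^i_X(-i)$ and $\R^i\mu_*(\mathbb{B}_{\dr}^+/t)\simeq\Omega^i_X(-i)$ (the latter by~\eqref{poincareBdr}) are coherent, so by the vanishing of higher coherent cohomology on a Stein space the Leray spectral sequence for $\mu$ degenerates, giving $H^i_{\proeet}(X,\so)\xrightarrow{\sim}H^0_{\eet}(X,\R^i\mu_*\so)$ and $H^i_{\proeet}(X,\mathbb{B}_{\dr}^+/t)\xrightarrow{\sim}H^0_{\eet}(X,\R^i\mu_*(\mathbb{B}_{\dr}^+/t))$ (cf.\ Remark~\ref{HTstein}); applying $H^0_{\eet}(X,-)$ to the commutative triangle of the first assertion then yields the global triangle, and ${\rm PL}$ is an isomorphism. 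When $X$ does not descend to $K$, I would run this over a strictly increasing dagger affinoid covering $\{X_n\}$ with each $X_n$ defined over a finite extension $K_n/K$, apply the $K_n$-version of the first assertion to each $X_n$, and pass to the limit exactly as in Remark~\ref{overC1}, using that ${\rm PL}$, ${\rm HT}$ and $\theta$ are constructed compatibly with the transition maps; commutativity is preserved in the limit.
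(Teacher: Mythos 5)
Your overall architecture (reduction to $i=1$ by multiplicativity of ${\rm HT}$, ${\rm PL}$, $\theta$, and deduction of claim (2) from claim (1) by a covering of $X$ by spaces defined over finite extensions $K_n$ of $K$ followed by a limit argument) is the same as the paper's. However, the ``key observation'' on which your degree-$1$ argument rests is false, and this is a genuine gap. Multiplication by $t^{-1}$ does \emph{not} carry ${\rm gr}^1_F\so\mathbb{B}_{\dr}^+(-1)$ isomorphically onto ${\rm gr}^0_F\so\mathbb{B}_{\dr}=\so\mathbb{C}$; it only embeds it as a proper subsheaf. The identification ${\rm gr}^i_F\so\mathbb{B}_{\dr}\simeq\so\mathbb{C}(i)$ that you invoke concerns the graded pieces of the \emph{full} sheaf $\so\mathbb{B}_{\dr}$, whereas the Faltings extension involves ${\rm gr}^1_F\so\mathbb{B}_{\dr}^+$, which is an extension of $\so\otimes_{\tilde{\mu}^*\so}\Omega^1$ by $\so(1)$ and is much smaller than $\so\mathbb{C}(1)$: the sheaf $\so\mathbb{C}$ receives the images of ${\rm gr}^r_F\so\mathbb{B}_{\dr}^+(-r)$ for \emph{all} $r\ge 0$. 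For the same reason the induced map on quotients $g:\so\otimes_{\tilde{\mu}^*\so}\Omega^1(-1)\to\coker\epsilon$ is not an isomorphism of pro-\'etale sheaves. A quick sanity check: using $\R^j\mu_*\so\mathbb{C}=0$ for $j\ge 1$, the sequence $0\to\so\to\so\mathbb{C}\to\coker\epsilon\to 0$ gives $\R^1\mu_*(\coker\epsilon)\simeq\R^2\mu_*\so\simeq\Omega^2_{X_C}(-2)$, while the projection formula gives $\R^1\mu_*\bigl(\so\otimes_{\tilde{\mu}^*\so}\Omega^1(-1)\bigr)\simeq\Omega^1_{X_C}(-1)\otimes_{\so_{X_C}}\Omega^1_{X_C}(-1)$; these disagree already for curves (where the former vanishes and the latter does not), so no isomorphism of short exact sequences can exist.

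What is true is only that there is a \emph{map} of short exact sequences from the twisted Faltings extension to the cokernel sequence (the canonical map into the ${\rm gr}^1_F\so\mathbb{B}_{\dr}$-sequence followed by the genuine isomorphism $t^{-1}:{\rm gr}^1_F\so\mathbb{B}_{\dr}\to{\rm gr}^0_F\so\mathbb{B}_{\dr}$). Functoriality of connecting maps then yields $\delta\circ\mu_*(g)=\partial_{\B^+}(-1)$, where $\delta:\mu_*(\coker\epsilon)\to\R^1\mu_*\so$ is the boundary map of \eqref{tea12}; to conclude ${\rm PL}^{-1}={\rm HT}^{-1}$ you must still identify $\mu_*(g)$ with $\beta^{-1}$, i.e.\ prove that $\beta$ is an isomorphism. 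This is exactly the point where the paper needs Scholze's computation $\R\mu_*{\rm gr}^i_F\so\mathbb{B}_{\dr}\simeq\so(i)$ (\cite[Prop.\,6.16]{Sch13a}): it gives exactness on the right of the pushed-forward ${\rm gr}^1_F\so\mathbb{B}_{\dr}$-sequence and shows that the maps $f_1$, $f_2$ (hence $\beta=f_1^{-1}(-1)$) are isomorphisms. Note that you cannot simply quote ``$\beta$ is an isomorphism'' as an input, since the paper's justification of that fact is deferred precisely to the proof of this lemma. Your proposal never invokes this pushforward computation, so the degree-$1$ comparison, as written, does not go through; once this ingredient is added (i.e., once your ``isomorphism of sequences'' is weakened to a map of sequences and supplemented by the $f_1$, $f_2$ argument), the rest of your proof --- the wedge-product reduction and the limit argument for claim (2) --- agrees with the paper's.
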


\begin{proof}We start with the first claim.  
Identifying ${\mathbb B}^+_{\dr}/t$ and $\so$ via $\theta$, we need to show that ${\rm PL}^{-1}{\rm HT}={\rm Id}$ or that ${\rm PL}^{-1}={\rm HT}^{-1}$. This is just a question of unwinding some definitions. 

 Let $i=1$.   
Consider the canonical  map of exact sequences on $X_{\proeet}$: 
$$
\xymatrix{
 0 \ar[r] &  \so(1) \ar[r] \ar@{=}[d]&  {\rm gr}^1_F\so\mathbb{B}_{\dr}^+ \ar[r]^-{\nabla} \ar[d] &  \so \otimes_{\tilde{\mu}^*\so} \Omega^1 \ar[r] \ar[d] &  0\\
 0 \ar[r] &  \so(1) \ar[r]^-{\epsilon_1} &  {\rm gr}^1_F\so\mathbb{B}_{\dr} \ar[r]^-{\nabla} &  \so{\mathbb C} \otimes_{\tilde{\mu}^*\so} \Omega^1,
}
$$
where the first sequence is obtained from the ${\mathbb B}^+_{\dr}$-Poincar\'e Lemma and the second one from its $\mathbb{B}_{\dr}$-version. 
By projecting the above diagram to $X_{C, \eet}$ we obtain a map of exact sequences
$$
\xymatrix{
 0 \ar[r] & {\mu}_*\so(1) \ar[r] \ar@{=}[d]&  {\mu}_*{\rm gr}^1_F\so\mathbb{B}_{\dr}^+ \ar[r]^{\nabla} \ar[d] &  \Omega_{X_C}^1 \ar[r]^-{\partial_{\B^+}={\rm HT}^{-1}(1)} \ar[d]^{f_1}_{\wr}&  \R^1{\mu}_*\so(1)\ar@{=}[d]  \ar[r] & 0\\
 0 \ar[r] & {\mu}_*\so(1) \ar[r]^-{\epsilon_1} & {\mu}_* {\rm gr}^1_F\so\mathbb{B}_{\dr} \ar[r] \ar[rd]^{\nabla} & \mu_*\coker(\epsilon_1)\ar[d]^{f_2}_{\wr}\ar[r]^-{\partial_{\B}} &  \R^1{\mu}_*\so(1)\ar[r] & 0\\
  &  & &   \mu_*(\so{\mathbb C} \otimes_{\tilde{\mu}^*\so} \Omega^1) 
}
$$
Since, by \cite[Prop.\,6.16]{Sch13a},  $\R\mu_*{\rm gr}^i_F\so\mathbb{B}_{\dr}\simeq \so(i)$,  the second row is exact on the right.  The map $f_2$, a priori just injective, is an isomorphism because  the composition $f_2f_1$ is an isomorphism; it follows that the map $f_1$ is also an isomorphism. The diagram shows that $\partial_{\B}={\rm HT}^{-1}(1)f_1^{-1}$. 
   
   Consider now the following map of exact sequences defined on $X_{C,\proeet}$ (obtained from the $\mathbb{B}_{\dr}$-Poincar\'e Lemma)
   $$
\xymatrix{
 0 \ar[r] &  \so(1) \ar[r]^-{\epsilon_1} &  {\rm gr}^1_F\so\mathbb{B}_{\dr} \ar[r]^-{\nabla} &  \so{\mathbb C} \otimes_{\tilde{\mu}^*\so} \Omega^1 \\
  0 \ar[r] &  \so \ar[r]^-{\epsilon} \ar[u]^{t}&  {\rm gr}^0_F\so\mathbb{B}_{\dr} \ar[r]^-{\nabla} \ar[u]^{t}&  \so{\mathbb C} \otimes_{\tilde{\mu}^*\so} \Omega^1(-1).  \ar[u]^{t}
}
$$
 By projecting it to  $X_{C,\eet}$ we obtain a map of exact sequences
$$
\xymatrix{
 0 \ar[r] &  {\mu}_*\so(1) \ar[r]^-{\epsilon_1} & {\mu}_* {\rm gr}^1_F\so\mathbb{B}_{\dr} \ar[r] &\mu_*\coker(\epsilon_1)\ar[r]^-{\partial_{\B}} &  \R^1 {\mu}_*\so(1) \ar[r] & 0\\
   0 \ar[r] &    {\mu}_*\so \ar[r]^-{\epsilon} \ar[u]^{t}_{\wr}&   {\mu}_*{\rm gr}^0_F\so\mathbb{B}_{\dr} \ar[r] \ar[u]^{t}_{\wr}&  \mu_*\coker(\epsilon)\ar[u]^{t}_{\wr}\ar[r]^{{\rm PL}^{-1}\beta}\ar[d]^{\beta}  &  \R^1 {\mu}_*\so. \ar[u]^{t}_{\wr}\\
& & & \Omega^1_{X_C}(-1)\ar[ru]_{{\rm PL}^{-1}}
}
$$
We note that the map $\beta$ is an isomorphism because so is the map $f_1$ (and $\beta=f^{-1}_1(-1)$). 

 In total, the  above diagrams show that
$$
{\rm PL}^{-1}\beta(1)=\partial_{\B}={\rm HT}^{-1}(1)f_1^{-1},
$$
hence ${\rm PL}^{-1}={\rm HT}^{-1}$,
as wanted. 
This proves the first claim of our lemma for $i=1$. The case of $i\ge 1$ is obtained by taking wedge products. 

 For the second claim of the lemma, choose a Stein covering of $X$ by Stein spaces $\{X_n\}$ such that each $X_n$ is defined over a finite extension $K_n$ of $K$  (to do that you may start with a Stein affinoid covering and then take the naive interiors of these affinoids containing the previous affinoids). The wanted diagram is obtained by taking the limit over $n$ of the diagram in claim (1) (note that  $\R^1\lim_n$ is trivial for all the terms of the diagram). 
\end{proof}

\subsection{The Bloch-Kato exponential}

We restrict our attention now to smooth Stein spaces. 
We will introduce here the Bloch-Kato exponential 
and show how it can be obtained, 
via the filtered ${\mathbb B}_{\dr}$-Poincar\'e Lemma, 
from a boundary map induced by a fundamental exact sequence. 
\subsubsection{The definition of the map ${\rm Exp}$}\label{sleepy2}

We first recall how the geometric  $p$-adic pro-\'etale cohomology of Stein spaces can be computed. In \cite[Th.\,1.8]{CDN3}, \cite[Th.\,6.14]{CN5}, Colmez-Dospinescu-Nizio{\l} proved the following theorem\footnote{Note that in~\cite{CDN3},\cite{CN5}, the set-up is slightly different since they do not work with condensed mathematics. The construction from~\cite{Bos23} however, uses the solid formalism.}: 

\begin{theorem}
Let $X$ be a Stein smooth rigid analytic space over $C$. For  $i \ge 0$, there is a map of exact sequences in $\sd(\Q_{p,\Box})$: 
 \begin{equation}
 \label{pStein}
 \xymatrix@R=5mm@C=6mm{
 0\ar[r] &  \Omega^{i-1}(X)/{\rm Ker}\,d\ar[r]^{{\rm Exp}} \ar@{=}[d] & H_{\proeet}^i(X,\Q_p(i))\ar[r] \ar[d]^{d{\rm Log}} & 
  (H^i_{{\rm HK}}(X){\otimes}^{\Box}_{\breve{F}}\wh{\B}^+_{\st})^{N=0,\phi=p^i}\ar[d]^{\iota_{\rm HK}\otimes\theta} \ar[r] & 0\\
  0\ar[r] &  \Omega^{i-1}(X)/{\rm Ker}\,d\ar[r]^-{d} & \Omega^i(X)^{d=0}\ar[r] & H^i_{\dr}(X)\ar[r] & 0.}
 \end{equation} 
 If $X$ is defined over $K$, this map is Galois equivariant.
\end{theorem}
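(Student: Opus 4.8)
The plan is to deduce the diagram~\eqref{pStein} from the identification of geometric $p$-adic pro-\'etale cohomology with syntomic cohomology, following Colmez--Nizio\l\ in the form reformulated by Bosco in the solid setting. First I would invoke the comparison isomorphism $\rg_{\proeet}(X,\Q_p(i))\simeq \rg_{\synt}(X,\Q_p(i))$ in $\sd(\Q_{p,\Box})$; for a Stein (hence partially proper) $X$ the Fontaine--Messing period map is an isomorphism in the relevant range, namely in cohomological degree equal to the Tate twist. Here the geometric syntomic cohomology is, by definition, the homotopy fibre of a comparison map
$$
\big(\rg_{\rm HK}(X){\otimes}^{\Box}_{\breve F}\wh{\B}^+_{\st}\big)^{N=0,\varphi=p^i}\xrightarrow{\ \iota_{\rm HK}\otimes\theta\ }\rg_{\dr}(X)/{\rm Fil}^i,
$$
where the target is the filtered de Rham realization with $\mathbb{B}_{\dr}$-coefficients and the monodromy condition $N=0$ couples the Hyodo--Kato operator with the derivation on $\wh{\B}^+_{\st}$. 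The whole statement then reduces to computing the long exact cohomology sequence of this fibre.

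Second, I would make each vertex explicit using Steinness. Since coherent cohomology on $X$ is acyclic, $\rg_{\dr}(X)$ is represented by the global de Rham complex $\big(\so(X)\to\Omega^1(X)\to\cdots\big)$, so that the filtered quotient is computed termwise; this is precisely the dagger/limit analysis carried out above for $\mathbb{B}_{\dr}/\mathbb{B}_{\dr}^+$ and $\mathbb{B}^+_{\dr}/t$. By the finiteness and slope properties of Hyodo--Kato cohomology, the derived eigenspace functor $(-)^{N=0,\varphi=p^i}$ applied to $\rg_{\rm HK}(X){\otimes}^{\Box}_{\breve F}\wh{\B}^+_{\st}$ contributes no higher cohomology in the degree of interest, so that its $H^i$ is exactly $(H^i_{\rm HK}(X){\otimes}^{\Box}_{\breve F}\wh{\B}^+_{\st})^{N=0,\varphi=p^i}$. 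Feeding these into the long exact sequence of the homotopy fibre produces the top row of~\eqref{pStein}: the right-hand term is this Hyodo--Kato invariant, while the left-hand boundary contribution is $\Omega^{i-1}(X)/{\rm Ker}\,d$, and one \emph{defines} ${\rm Exp}$ to be this boundary map (its injectivity being exactly the slope input mentioned in the introduction).

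Third, I would obtain the bottom row and the vertical maps by applying the specialization $\theta\colon\wh{\B}^+_{\st}\to C$ together with the Hyodo--Kato isomorphism $\iota_{\rm HK}$. Collapsing the period coefficients via $\iota_{\rm HK}\otimes\theta$ sends the Hyodo--Kato vertex onto the ordinary de Rham cohomology $H^i_{\dr}(X)$, and repeating the homotopy-fibre computation with $C$-coefficients instead of $\wh{\B}^+_{\st}$-coefficients yields the classical fundamental exact sequence $0\to\Omega^{i-1}(X)/{\rm Ker}\,d\xrightarrow{d}\Omega^i(X)^{d=0}\to H^i_{\dr}(X)\to 0$, valid because for Stein $X$ de Rham cohomology is computed by global forms. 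The middle vertical $d{\rm Log}$ and the right vertical $\iota_{\rm HK}\otimes\theta$ are the maps induced on cohomology by this specialization of coefficients; the left-hand identity and the commutativity of the two squares are built into the construction, and Galois equivariance when $X$ is defined over $K$ holds because every map in sight is $\sg_K$-equivariant before base change to $C$.

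The hard part will be the control of the homotopy fibre in the solid/condensed framework. One must (i) verify that the derived functor $(-)^{N=0,\varphi=p^i}$ produces \emph{strict} short exact sequences of solid $\Q_p$-modules with no stray higher terms, which rests on the slope analysis of Hyodo--Kato cohomology mentioned above; (ii) interchange this functor with the passage from a dagger affinoid exhaustion $\{X_n\}$ to the Stein space $X$, which requires the vanishing of $\R^1\lim_n$ for the relevant pro-systems — exactly the topological Mittag--Leffler argument used above in the computation of $H^i_{\proeet}(X,\mathbb{B}_{\dr}/\mathbb{B}_{\dr}^+)$; and (iii) establish the Fontaine--Messing comparison in the required range, which is the deepest input and combines the finiteness of Hyodo--Kato cohomology with the analysis of the period sheaves on the partially proper $X$. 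Once these three points are in place, the diagram and its exactness follow formally from the long exact sequence of the fibre.
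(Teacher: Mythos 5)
Your outline is correct, but it reconstructs the \emph{original} proof of Colmez--Dospinescu--Nizio{\l} (\cite{CDN3}, \cite{CN5}) --- which the paper only cites --- rather than the argument the paper actually presents. You define geometric syntomic cohomology as the homotopy fibre of $\iota_{\rm HK}\otimes\theta\colon (\rg_{\rm HK}(X)\otimes^{\Box}_{\breve F}\wh{\B}^+_{\st})^{N=0,\varphi=p^i}\to \rg_{\dr}(X/\B^+_{\dr})/F^i$, invoke the basic comparison theorem of \cite{CN4} identifying it with pro-\'etale cohomology in degrees $\le i$, and read the top row off the long exact sequence of the fibre; here the slope analysis is what shows that $H^{i-1}$ of the Hyodo--Kato eigenspace surjects onto $H^{i-1}_{\dr}(X)\otimes C$ inside $H^{i-1}(\rg_{\dr}(X/\B^+_{\dr})/F^i)$, which is exactly what makes the cokernel of that map equal to $\Omega^{i-1}(X)/{\rm Ker}\,d$ and ${\rm Exp}$ injective --- your parenthetical remark identifies this correctly. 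The paper instead follows Bosco \cite{Bos23}: it starts from the exact sequence $0\to\Q_p\to{\mathbb B}_e\to{\mathbb B}_{\dr}/{\mathbb B}^+_{\dr}\to 0$ of pro-\'etale sheaves on a dagger affinoid exhaustion $\{X_n\}$, uses Bosco's identification $H^i_{\proeet}(X_n,{\mathbb B}_e)\simeq (H^i_{\rm HK}(X_n)\otimes^{\Box}_{\breve F}\wh{\B}^+_{\st}[\tfrac1t])^{N=0,\varphi=1}$ together with the ${\mathbb B}_{\dr}$-Poincar\'e-lemma computation \eqref{sleepy1}, plus the same Frobenius-slope input and $\R^1\lim$-vanishing to pass to $X$. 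Neither route is more elementary --- both need the Stein/dagger limit formalism, the slope analysis, and one deep comparison input (for you, the pro-\'etale/syntomic comparison of \cite{CN4}; for Bosco, his computation of ${\mathbb B}_e$-cohomology). What Bosco's route buys, and the reason the paper adopts it, is that ${\rm Exp}$ is then \emph{by construction} a boundary map for a sequence of period sheaves differing from Heuer's sequence \eqref{sesB} only by multiplication by $t^{-1}$, which makes the key compatibility of Proposition \ref{ajin} essentially formal. With your syntomic definition that compatibility is not automatic: as the paper notes in a footnote, it has not been checked that the syntomic ${\rm Exp}$ of \cite{CN5} agrees with Bosco's, and the arguments of Section \ref{compatibility} require Bosco's version. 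So your proof does establish the theorem as stated, but if used in place of the paper's construction it would leave a (known, deliberately sidestepped) compatibility question open downstream.
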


\begin{remark}
The maps in diagram \eqref{pStein} were constructed using a comparison of $p$-adic pro-\'etale cohomology with syntomic cohomology of Bloch-Kato type. We call  the map ${\rm Exp}$ the ``Bloch-Kato exponential''; this  is supposed to suggest the Bloch-Kato exponential map from \cite{BK}.
\end{remark}
The cohomology $H^r_{\hk}(X)$ appearing on the right of the first exact sequence is the Hyodo-Kato cohomology as defined by Colmez-Nizio{\l} in \cite[Sec.\,4]{CN4}
(it is built from the logarithmic crystalline cohomology $\rg_{\rm cr}(\mathcal{X}_{\so_L,0}/W(k_L)^0)$ for $L/K$ a finite extension with residue field $k_L$, where  $\mathcal{X}_{\so_L}\to {\rm Spf}(\so_L)$ is a semistable formal scheme and $W(k_L)^0$ denotes the formal scheme ${\rm Spf}(W(k_L))$ equipped with the log-structure induced by $\N \to W(k_L), 1 \mapsto 0$).  It is a $(\phi, N, \sg_K)$-module over $\breve{F}$  equipped with a Hyodo-Kato isomorphism
$\iota_{\hk}: H^i_{{\rm HK}}(X)\otimes^{\Box}_{\breve{F}}C\stackrel{\sim}{\to}H^i_{\dr}(X)$.

An alternative construction of diagram \eqref{pStein}  was given by Bosco\footnote{Bosco's construction is given for Stein spaces that are base change to $C$ of varieties defined over $K$ but as we will see here, the result is still valid when it is not the case.} in \cite[Th.\,7.7]{Bos23}. His construction is closely related to the subject of this paper and we will now briefly describe how to get the top row in \eqref{pStein}\footnote{We did not check that the map ${\rm Exp}$ constructed in \cite{Bos23} is the same as the one constructed in \cite{CN5} but we will not need it.}. The starting point is the exact sequence 
 $$
 0\to \Q_p\to {\mathbb B}_e\to {\mathbb B}_{\dr}/{\mathbb B}^+_{\dr}\to 0
 $$
 of pro-\'etale sheaves on $X$, where we set ${\mathbb B}_e:={\mathbb B}[1/t]$. 
 It yields an exact sequence
 $$
 H^{i-1}_{\proeet}(X,{\mathbb B}_e) \lomapr{\alpha_{i-1}}   H^{i-1}_{\proeet}(X,{\mathbb B}_{\dr}/{\mathbb B}^+_{\dr})\to  H^i_{\proeet}(X,\Q_p)\to H^i_{\proeet}(X,{\mathbb B}_e)\lomapr{\alpha_i}  H^i_{\proeet}(X,{\mathbb B}_{\dr}/{\mathbb B}^+_{\dr})
 $$
 Because of limit considerations it is better to work with the dagger analogue of the above exact sequence. Let $\{X_n\},\,n\in\N$, be a strictly increasing dagger affinoid covering of $X$. Each affinoid $X_n$ is the base change to $C$ of an affinoid $X_{n, K_n}$ defined over a finite extension $K_n$ of $K$. 
 
 For $n\in\N$,  we have an exact sequence
$$
 H^{i-1}_{\proeet}(X_{n},{\mathbb B}_e) \lomapr{\alpha_{i-1}}   H^{i-1}_{\proeet}(X_{n},{\mathbb B}_{\dr}/{\mathbb B}^+_{\dr})\to  H^i_{\proeet}(X_{n},\Q_p)\to H^i_{\proeet}(X_{n},{\mathbb B}_e)\lomapr{\alpha_i}  H^i_{\proeet}(X_{n},{\mathbb B}_{\dr}/{\mathbb B}^+_{\dr}).
$$

 Since, by \cite[Th.\,4.1]{Bos23},  \cite[the proof of Th.\,7.7]{Bos23},  and \eqref{sleepy1}, we have an isomorphism and an exact sequence
\begin{align*}
  & H^i_{\proeet}(X_{n},{\mathbb B}_e)\simeq  (H^i_{{\rm HK}}(X_{n}){\otimes}^{\Box}_{\breve{F}}\wh{\B}^+_{\st}[\tfrac{1}{t}])^{N=0,\phi=1},\\
 & 0\to H^i_{\dr}(X_{n,K_n})\otimes^{\Box}_{K_n}(\B_{\dr}/t^{-i}\B^+_{\dr})\to  H^i_{\proeet}(X_{n},{\mathbb B}_{\dr}/{\mathbb B}^+_{\dr})\to (\Omega^i(X_{n})/{\rm Ker}\,d)(-i-1)\to 0,
\end{align*}
it suffices to show that  the map $\alpha_{i-1}$ surjects onto 
$ H^i_{\dr}(X_{n,K_n})\otimes^{\Box}_{K_n}(\B_{\dr}/t^{-i}\B^+_{\dr})$ and 
\begin{align*}
 {\rm Ker}\, \alpha_{i}(i) & \simeq   (H^i_{{\rm HK}}(X_{n}){\otimes}^{\Box}_{\breve{F}}\B_{\log})^{N=0,\phi=p^i},\\
  (H^i_{{\rm HK}}(X_{n}){\otimes}^{\Box}_{\breve{F}}{\B}_{\log})^{N=0,\phi=p^i} & \simeq  (H^i_{{\rm HK}}(X_{n}){\otimes}^{\Box}_{\breve{F}}\wh{\B}^+_{\st})^{N=0,\phi=p^i}.
\end{align*}
 But this follows from the analysis of the slopes of Frobenius on the Hyodo-Kato cohomology. 
 
For all $n\geq 0$, we have constructed compatible  exact sequences
\begin{equation}\label{sniad1}
 \xymatrix@R=5mm@C=6mm{
 0\ar[r] &  \Omega^{i-1}(X_{n})/{\rm Ker}\,d\ar[r]^{{\rm Exp}}  & H_{\proeet}^i(X_{n},\Q_p(i))\ar[r]  & 
  (H^i_{{\rm HK}}(X_{n}){\otimes}^{\Box}_{\breve{F}}\wh{\B}^+_{\st})^{N=0,\phi=p^i}\ar[r] & 0.
  }
\end{equation}
  We obtain the top row in \eqref{pStein} by passing to the limit over $n$ and using the isomorphism 
  $H_{\proeet}^i(X,\Q_p(i))\stackrel{\sim}{\to}\lim_nH_{\proeet}^i(X_{n},\Q_p(i))$.

\subsubsection{Comparison of two boundary maps}

The purpose of this section is to prove the following comparison result: 
\begin{proposition}
\label{ajin}
Let $X$ be a rigid analytic  space, which is Stein and smooth over $C$. 
Let $i\geq 1$.
\begin{enumerate}
\item  There is a commutative diagram: 
\[ \xymatrix{\Omega^i(X)(-i) \ar[r]^-{{\rm Exp}(-i)} & H^{i+1}_{\proeet}(X, \Q_p(1)) \\
 H^i_{\proeet}(X, \Bdr^+/t),\ar[u]^{\rm PL}_{\wr}\ar[ur]_{\partial_{\rm BdR}} & }\]
where $\partial_{\rm BdR}$ is the edge map coming from the exact sequence of pro-\'etale sheaves~\eqref{sesB} and ${\rm Exp}(-i)$ is the $(-i)$-Tate twist of the map from~\eqref{pStein}.  
\item We have the exact sequence
$$
0\to \Omega^i(X)^{d=0}(-i)\to H^i_{\proeet}(X, \Bdr^+/t)\lomapr{\partial_{\rm BdR}} H^{i+1}_{\proeet}(X, \Q_p(1)).
$$
\end{enumerate}
\end{proposition}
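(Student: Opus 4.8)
\emph{Overview.} The plan is to realize both $\partial_{\rm BdR}$ and the $(-i)$-twist of ${\rm Exp}$ as boundary maps attached to the two fundamental exact sequences of \eqref{sesB}, and to exhibit a morphism between these two sequences so that part (1) reduces to an explicit comparison of Poincar\'e-lemma projections. Part (2) will then follow formally from the injectivity of ${\rm Exp}$.

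\emph{A morphism of fundamental sequences.} Using $\mathbb{B}^{\varphi=p}\subset\mathbb{B}[1/t]^{\varphi=p}$ and the canonical identification $\mathbb{B}[1/t]^{\varphi=p}=\mathbb{B}_e(1)$ (via the period $t$), together with $\Bdr^+/t=\Bdr^+/{\rm Fil}^1\hookrightarrow\Bdr/{\rm Fil}^1=(\Bdr/\Bdr^+)(1)$, I would first write down the commutative diagram of pro-\'etale sheaves on $X$
\[
\xymatrix{
0 \ar[r] & \Q_p(1) \ar[r]\ar@{=}[d] & \mathbb{B}^{\varphi=p} \ar[r]\ar[d] & \Bdr^+/t \ar[r]\ar[d]^{\rho} & 0\\
0 \ar[r] & \Q_p(1) \ar[r] & \mathbb{B}_e(1) \ar[r] & (\Bdr/\Bdr^+)(1) \ar[r] & 0,
}
\]
in which the top row is the second sequence of \eqref{sesB} with $r=1$ and the bottom row is the $1$-Tate twist of the first sequence of \eqref{sesB}. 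The left square commutes because a generator of $\Q_p(1)$ maps to $t$ in both middle terms; the right square commutes because both composites equal the natural map $\mathbb{B}^{\varphi=p}\hookrightarrow\Bdr\twoheadrightarrow\Bdr/{\rm Fil}^1$.

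\emph{Reduction of part (1).} By naturality of the connecting homomorphism, the diagram above induces a commutative square
\[
\xymatrix{
H^i_{\proeet}(X,\Bdr^+/t) \ar[r]^-{\partial_{\rm BdR}}\ar[d]_{\rho} & H^{i+1}_{\proeet}(X,\Q_p(1)) \ar@{=}[d]\\
H^i_{\proeet}(X,(\Bdr/\Bdr^+)(1)) \ar[r]^-{\partial'} & H^{i+1}_{\proeet}(X,\Q_p(1)),
}
\]
where $\partial'$ is the $1$-twist of the connecting map $\delta_i\colon H^i_{\proeet}(X,\Bdr/\Bdr^+)\to H^{i+1}_{\proeet}(X,\Q_p)$ of the first sequence of \eqref{sesB}. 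By the construction of ${\rm Exp}$ in Section~\ref{sleepy2}, $\ker\delta_i$ equals the ``de Rham part'' $\ker\pi$ of \eqref{sleepy3}, so $\delta_i=\bar\delta_i\circ\pi$ with $\bar\delta_i$ injective and ${\rm Exp}=\bar\delta_i(i+1)$; hence ${\rm Exp}(-i)=\bar\delta_i(1)$. Combining these, $\partial_{\rm BdR}=\bar\delta_i(1)\circ\pi(1)\circ\rho$, while ${\rm Exp}(-i)\circ{\rm PL}=\bar\delta_i(1)\circ{\rm pr}\circ{\rm PL}$, where ${\rm pr}\colon\Omega^i(X)(-i)\to(\Omega^i(X)/{\rm Ker}\,d)(-i)$ is the projection. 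Thus part (1) is reduced to the single identity
\[
\pi(1)\circ\rho={\rm pr}\circ{\rm PL}\colon\quad H^i_{\proeet}(X,\Bdr^+/t)\longrightarrow (\Omega^i(X)/{\rm Ker}\,d)(-i).
\]

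\emph{The chain-level check and part (2).} This last identity I would verify by unwinding the Poincar\'e lemma exactly as in the proof of Lemma~\ref{sniad3}: the complex \eqref{sniad2} computing $\R\Gamma_{\proeet}(X,\Bdr^+/t)$ has zero differentials and ${\rm PL}$ is its canonical projection onto $\Omega^i(X)(-i)$; the map $\rho$ is induced by the filtered inclusion $\gr^0_F\Bdr=\Bdr^+/{\rm Fil}^1\hookrightarrow\Bdr/{\rm Fil}^1$, which induces the identity on the top graded row $\Omega^i$ of the complex \eqref{poincareBdr}; and $\pi$ is the projection of that same complex onto $(\Omega^i(X)/{\rm Ker}\,d)(-i-1)$. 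Both composites are therefore the canonical projection onto the $\Omega^i$-row followed by ${\rm pr}$, and they agree. As in the earlier lemmas, one carries this out at each member $X_n$ of a dagger affinoid covering (defined over a finite extension $K_n/K$) and passes to the limit over $n$, the relevant $\R^1\lim$ terms vanishing. Finally, part (2) is formal: since ${\rm PL}$ is an isomorphism and ${\rm Exp}$ is injective---this is the injectivity in \eqref{CDN}, \eqref{pStein}, resting on the slope properties of Hyodo-Kato cohomology---the kernel of $\partial_{\rm BdR}={\rm Exp}(-i)\circ{\rm PL}$ is ${\rm PL}^{-1}$ of $\ker{\rm pr}=\Omega^i(X)^{d=0}(-i)$, giving precisely the asserted left-exact sequence. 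I expect the main obstacle to be neither conceptual nor the limit argument, but rather the careful bookkeeping of the Tate twists and filtration shifts in the morphism of fundamental sequences, together with the chain-level identification of $\pi(1)\circ\rho$ with ${\rm pr}\circ{\rm PL}$.
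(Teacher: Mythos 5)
Your proof is correct and follows essentially the same route as the paper: your morphism of fundamental sequences is just the $(1)$-twisted form of the paper's diagram (where the vertical maps are written as multiplication by $t^{-1}$), the factorization $\delta_i=\bar\delta_i\circ\pi$ with ${\rm Exp}(-i)=\bar\delta_i(1)$ is exactly the paper's appeal to the construction of ${\rm Exp}$ in Section~\ref{sleepy2}, and your chain-level identity $\pi(1)\circ\rho={\rm pr}\circ{\rm PL}$ is the commutativity of the paper's ``left trapezoid,'' which the paper leaves as an easy check against Section~\ref{bStein}. Part (2) is deduced formally from the injectivity of ${\rm Exp}$ in both treatments, so the only difference is that you spell out the Poincar\'e-lemma bookkeeping in more detail than the paper does.
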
 

\begin{proof}
The second claim follows immediately from the first one and diagram \eqref{pStein}.
 For the first claim, note that the exact sequences \eqref{sesB} fit into a commutative diagram 
$$
\xymatrix{
0\ar[r] &\Q_p\ar[r] &  {\mathbb B}_e\ar[r] & ({\mathbb B}_{\dr}/{\mathbb B}^+_{\dr})\ar[r] & 0\\
0\ar[r] &\Q_p(1)\ar[r]\ar[u]^{\wr}_{\rm Id} &  {\mathbb B}^{\phi=p}\ar[r]\ar[u]^{t^{-1}}  & {\mathbb B}^+_{\dr}/t\ar[r]\ar[u]^{t^{-1}}  & 0.
}
$$
This yields that the outer square in the following  diagram commutes.
$$
\xymatrix{
H^i_{\proeet}(X, ({\mathbb B}_{\dr}/{\mathbb B}^+_{\dr})(1))\ar[rr]^{\partial_{}} \ar[dr]^{\pi}& &H^{i+1}_{\proeet}(X,\Q_p(1))\\
& (\Omega^{i}(X)/{\rm Ker}\,d)(-i)\ar@{^(->}[ru]^{{\rm Exp}(-i)}\\
& \Omega^{i}(X)(-i)\ar@{->>}[u]^{\can}\ar[ruu]_{{\rm Exp}(-i)}\\
H^i_{\proeet}(X, {\mathbb B}^+_{\dr}/t)\ar[ru]^{\rm PL}_{\sim}\ar[rr]^{\partial_{\rm BdR}}\ar[uuu]^{t^{-1}}  && H^{i+1}_{\proeet}(X,\Q_p(1)).\ar[uuu]_{\wr}^{\rm Id} 
}
$$
The map $\pi$ is the one from \eqref{sleepy3} and Remark \ref{overC1}. The top triangle commutes by the construction of the map ${\rm Exp}$ described in Section \ref{sleepy2}. Using the computations in Section \ref{bStein}, it is easy to check that the  left trapezoid commutes. This gives us claim (1) of the proposition. 
\end{proof}

\section{The image of ${\rm HTlog}$}

The goal of this section is to prove the following result: 

\begin{theorem}
\label{image00}
Let $X$ be a smooth Stein rigid space over $C$. 
For $i\ge 1$, 
the image of the restriction of the Hodge-Tate logarithm to the group of principal units 
\[ {\rm HTlog}_U : H^i_v(X,U) \to \Omega^i(X)(-i) \]
fits into a short exact sequence of solid $\Z_p$-modules
$$
0 \to  \Omega^i(X)^{d=0}(-i) \to {\rm Im}({\rm HTlog}_U) \xrightarrow{ {\rm Exp}}  \mathcal{I}^i(X) \to 0 
$$
where  $\mathcal{I}^i(X) \subset H^{i+1}_{\proeet}(X, \Q_p(1))$ is the intersection
$${\rm Im}({\rm Exp}) \cap {\rm Im}(\iota^{i+1})={\rm Im}({\rm Exp}) \cap {\rm Ker}(\pi^{i+1}),
$$
where  ${\rm Exp} : (\Omega^i(X)/{\rm Ker}\,d)(-i) \hookrightarrow H^{i+1}_{\proeet}(X, \Q_p(1))$ is the Bloch-Kato exponential map from the previous section and 
$$\iota^j: H^{j}_{\proeet}(X, \Z_p(1)) \to H^j_{\proeet}(X, \Q_p(1)),\quad \pi^j: H^j_{\proeet}(X, \Q_p(1)) \to H^j_{\proeet}(X,  \Q_p/\Z_p(1))
$$
are the canonical morphisms. 
\end{theorem}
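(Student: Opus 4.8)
The plan is to follow the strategy of Heuer \cite[Sec.\,6.2]{Heu21} indicated in the introduction: first identify ${\rm Im}({\rm HTlog}_U)$ with the kernel of the boundary map of the logarithm sequence, and then reinterpret that boundary map, via the fundamental exact sequence of $p$-adic Hodge theory, in terms of the Bloch--Kato exponential of Proposition~\ref{ajin}.

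\emph{Reduction to $\log$ on cohomology.} First I would show that, for a Stein space, ${\rm HTlog}_U$ coincides with the composite
$$
H^i_v(X,U)\xrightarrow{\log_*}H^i_v(X,\so)\xrightarrow[\sim]{{\rm HT}}\Omega^i(X)(-i).
$$
Indeed, by naturality of the edge maps of the Leray spectral sequence for $\nu\colon X_v\to X_{\eet}$ applied to the sheaf morphism $\log\colon U\to\so$, together with the isomorphism $\R^i\nu_*U\xrightarrow{\sim}\R^i\nu_*\so$ of Lemma~\ref{log} and the fact (Remark~\ref{HTstein}) that the edge map $H^i_v(X,\so)\to H^0_{\eet}(X,\R^i\nu_*\so)$ is an isomorphism for Stein $X$, the definition~\eqref{HTlogU} unwinds to the above composite. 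The long exact sequence of the first sequence in~\eqref{even1} then gives ${\rm Im}(\log_*)={\rm Ker}(\delta_U)$, where $\delta_U\colon H^i_v(X,\so)\to H^{i+1}_v(X,(\Q_p/\Z_p)(1))$ is the connecting homomorphism; hence ${\rm Im}({\rm HTlog}_U)={\rm HT}({\rm Ker}(\delta_U))$.

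\emph{Comparison of boundary maps (the main obstacle).} The crux is to compare $\delta_U$ with the boundary map $\partial$ of the fundamental sequence $0\to\Q_p(1)\to\mathbb{B}^{\varphi=p}\to\so\to0$ from~\eqref{sesB} (using $\theta\colon\Bdr^+/t\xrightarrow{\sim}\so$). Le Bras's identification~\eqref{theta-log} presents $\mathbb{B}^{\varphi=p}$ as $U^\flat=\lim_{x\mapsto x^p}(1+\mathfrak{m}\so)$, with the structure map equal to $U^\flat\xrightarrow{\sharp}U\xrightarrow{\log}\so$. This produces a commutative diagram of short exact sequences of $v$-sheaves
\begin{CD}
0 @>>> \Q_p(1) @>>> \mathbb{B}^{\varphi=p} @>>> \so @>>> 0 \\
@. @VV{\can}V @VV{\sharp}V @| @. \\
1 @>>> (\Q_p/\Z_p)(1) @>>> U @>{\log}>> \so @>>> 1
\end{CD}
whose right square commutes tautologically. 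The key point is to check that $\sharp$ carries ${\rm Ker}(\log\circ\sharp)=\Q_p(1)$ into ${\rm Ker}(\log)=(\Q_p/\Z_p)(1)$ by the canonical projection $\can$ (this is the computation $\lim_{x\mapsto x^p}\mu_{p^\infty}=\Q_p(1)\twoheadrightarrow\mu_{p^\infty}$, $(u_n)_n\mapsto u_0$), which is exactly the sheaf map inducing $\pi$ in the statement. Naturality of connecting maps then yields $\delta_U=\pi^{i+1}\circ\partial$, whence
$$
{\rm Ker}(\delta_U)=\partial^{-1}({\rm Ker}\,\pi^{i+1})=\partial^{-1}({\rm Im}\,\iota^{i+1}),
$$
the last equality being exactness of the Bockstein sequence of $0\to\Z_p(1)\to\Q_p(1)\to(\Q_p/\Z_p)(1)\to0$. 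Here I would also record that all the sheaves in play have the same $v$- and pro-\'etale cohomology, so that $\partial$ may be computed pro-\'etale and identified with $\partial_{\rm BdR}$ as in Proposition~\ref{ajin}.

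\emph{Translation to differential forms and conclusion.} Finally I would transport everything through ${\rm HT}$. By Lemma~\ref{sniad3}(\ref{sniad3-2}), ${\rm HT}$ is compatible with ${\rm PL}$, and Proposition~\ref{ajin}(1) gives $\partial_{\rm BdR}={\rm Exp}(-i)\circ{\rm PL}$ on $H^i_{\proeet}(X,\Bdr^+/t)$, where ${\rm Exp}(-i)$ denotes the composite of the canonical surjection $\Omega^i(X)(-i)\twoheadrightarrow(\Omega^i(X)/{\rm Ker}\,d)(-i)$ with the injective Bloch--Kato map; by Proposition~\ref{ajin}(2) its kernel is $\Omega^i(X)^{d=0}(-i)$. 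Combining this with the previous paragraph gives
$$
{\rm Im}({\rm HTlog}_U)=\{\omega\in\Omega^i(X)(-i):{\rm Exp}(-i)(\omega)\in{\rm Im}\,\iota^{i+1}\}.
$$
A short diagram chase then finishes the proof: restricted to this subgroup, ${\rm Exp}(-i)$ has kernel $\Omega^i(X)^{d=0}(-i)$ (by injectivity of ${\rm Exp}$), and its image is exactly ${\rm Im}({\rm Exp})\cap{\rm Im}(\iota^{i+1})=\mathcal{I}^i(X)$, since any class in this intersection lifts to some $\omega$ with ${\rm Exp}(-i)(\omega)\in{\rm Im}\,\iota^{i+1}$. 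This yields the short exact sequence of the theorem; I would keep track of the solid structures throughout so that it is a sequence of solid $\Z_p$-modules.
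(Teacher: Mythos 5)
Your proposal is correct and follows essentially the same route as the paper: the paper likewise identifies ${\rm Im}({\rm HTlog}_U)$ with the kernel of the boundary map of the logarithm sequence (its diagram \eqref{ven1}), proves your "comparison of boundary maps" as its Lemma~\ref{vendredi2} using exactly the same map of short exact sequences via the sharp map of~\eqref{theta-log} together with Lemma~\ref{sniad3}(\ref{sniad3-2}) and Proposition~\ref{ajin}, and concludes with the same Bockstein/diagram-chase argument. The only cosmetic difference is that the paper passes from the $v$-site to the pro-\'etale site once at the outset rather than recording the comparison of topologies mid-proof.
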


\begin{remark}
Alternatively, using diagram \eqref{pStein}, the group 
$ \mathcal{I}^i(X) $ can be seen as exact forms in $\Omega^{i+1}(X)$ coming from $ H^{i+1}_{\proeet}(X, \Z_p(1))$. 
\end{remark}
In particular, we have the following immediate corollary: 

\begin{corollary}
\label{image1}
Let $X$ be a smooth Stein space over $C$. Then, 
\begin{enumerate}
\item The image by ${\rm HTlog}$ of ${\rm Pic}_v(X)$ contains all the closed differentials. More generally, the image by  ${\rm HTlog}$ of $H^i_v(X,\so^{\times})$, $i\geq 1$,   contains all the closed differentials.
\item If the map $H^1_v(X,U) \to {\rm Pic}_v(X)$ is surjective then there is an exact sequence of solid $\Z_p$-modules
$$
0 \to  \Omega^1(X)^{d=0}(-1) \to {\rm Im}({\rm HTlog}) \to \mathcal{I}^1(X) \to 0. 
$$
\end{enumerate}
\end{corollary}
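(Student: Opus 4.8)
The plan is to express $\operatorname{Im}({\rm HTlog}_U)$ as the kernel of the boundary map attached to the logarithm sequence, and then to identify that boundary map, through the Bloch--Kato exponential and the comparisons already established in Section~\ref{compatibility}, with the reduction map onto torsion classes in integral pro-\'etale cohomology. Throughout I work on $X_{\proeet}$: by Lemma~\ref{log}(2) the cohomology of $U$ and of $\so$ agrees in the $v$- and pro-\'etale topologies and ${\rm HTlog}_U$ is unchanged, so nothing is lost. The first step is to take the long exact sequence attached to the sheaf sequence
\[ 1 \to (\Q_p/\Z_p)(1) \to U \xrightarrow{\log} \so \to 1 \]
of Lemma~\ref{log}(1), which gives $\operatorname{Im}\big(\log_*\colon H^i_{\proeet}(X,U)\to H^i_{\proeet}(X,\so)\big)=\ker(\partial)$ for the boundary map $\partial\colon H^i_{\proeet}(X,\so)\to H^{i+1}_{\proeet}(X,(\Q_p/\Z_p)(1))$. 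Comparing the Leray spectral sequences for $\mu\colon X_{\proeet}\to X_{\eet}$ attached to $U$ and to $\so$, and using that for a Stein space the one for $\so$ degenerates with $H^i_{\proeet}(X,\so)\xrightarrow{\sim}H^0_{\eet}(X,\R^i\mu_*\so)$ (Remark~\ref{HTstein}) together with the isomorphism $\log\colon\R^i\mu_*U\xrightarrow{\sim}\R^i\mu_*\so$ of Lemma~\ref{log}(2), one checks that the composite defining ${\rm HTlog}_U$ has image ${\rm HT}(\operatorname{Im}\log_*)$. Hence $\operatorname{Im}({\rm HTlog}_U)={\rm HT}\big(\ker\partial\big)\subseteq\Omega^i(X)(-i)$.

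Next I would compare the logarithm sequence with the fundamental sequence $0 \to \Q_p(1) \to \mathbb{B}^{\varphi=p} \xrightarrow{\theta} \Bdr^+/t \to 0$ from~\eqref{sesB}. The key point is that, by the identification~\eqref{theta-log}, the sharp map $\sharp\colon\mathbb{B}^{\varphi=p}=U^{\flat}\to U$ satisfies $\log\circ\sharp=\theta$, so it is the middle arrow of a morphism of short exact sequences whose right arrow is the isomorphism $\theta\colon\Bdr^+/t\xrightarrow{\sim}\so$ and whose left arrow $\Q_p(1)\to(\Q_p/\Z_p)(1)$ is the canonical reduction $V_p\mu\to\mu_{p^\infty}$. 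Passing to the induced square of boundary maps gives the commutative diagram
\[
\xymatrix@R=6mm@C=10mm{
H^i_{\proeet}(X,\Bdr^+/t)\ar[r]^-{\partial_{\rm BdR}}\ar[d]^{\theta}_{\wr} & H^{i+1}_{\proeet}(X,\Q_p(1))\ar[d]^{\pi^{i+1}}\\
H^i_{\proeet}(X,\so)\ar[r]^-{\partial} & H^{i+1}_{\proeet}(X,(\Q_p/\Z_p)(1)),
}
\]
where the right vertical arrow is $\pi^{i+1}$, since it is induced by $\Q_p(1)\to(\Q_p/\Z_p)(1)$; the integral--rational sequence $0\to\Z_p(1)\to\Q_p(1)\to(\Q_p/\Z_p)(1)\to0$ then yields $\ker\pi^{i+1}=\operatorname{Im}\iota^{i+1}$.

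To conclude I would feed in the two compatibilities already proved: ${\rm HT}\circ\theta={\rm PL}$ (Lemma~\ref{sniad3}(\ref{sniad3-2})) and $\partial_{\rm BdR}={\rm Exp}(-i)\circ{\rm PL}={\rm Exp}\circ\can\circ{\rm PL}$ (Proposition~\ref{ajin}(1)). Combining them with the square above gives, as maps on $H^i_{\proeet}(X,\so)$,
\[ \partial = \pi^{i+1}\circ\partial_{\rm BdR}\circ\theta^{-1} = \pi^{i+1}\circ{\rm Exp}\circ\can\circ{\rm HT}. \]
Transporting through ${\rm HT}$ and using that ${\rm Exp}$ is injective, this identifies
\[ \operatorname{Im}({\rm HTlog}_U) = {\rm HT}\big(\ker\partial\big) = (\,{\rm Exp}\circ\can\,)^{-1}\big(\operatorname{Im}\iota^{i+1}\big)\subseteq\Omega^i(X)(-i). \]
Restricting ${\rm Exp}\circ\can$ to $\operatorname{Im}({\rm HTlog}_U)$, its kernel is $\ker\can=\Omega^i(X)^{d=0}(-i)$ (as ${\rm Exp}$ is injective), and this subspace lies in $\operatorname{Im}({\rm HTlog}_U)$ because closed forms map to $0\in\operatorname{Im}\iota^{i+1}$; its image is ${\rm Im}({\rm Exp}\circ\can)\cap\operatorname{Im}\iota^{i+1}={\rm Im}({\rm Exp})\cap\operatorname{Im}\iota^{i+1}=\mathcal{I}^i(X)$, using that $\can$ is surjective. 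This is exactly the asserted short exact sequence, the two descriptions of $\mathcal{I}^i(X)$ being reconciled by $\operatorname{Im}\iota^{i+1}=\ker\pi^{i+1}$.

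I expect the crux to be the second step: verifying that the sharp map genuinely defines a morphism of short exact sequences (that $\sharp$ sends $\Q_p(1)=\ker\theta$ into $(\Q_p/\Z_p)(1)=\ker\log$ via the canonical reduction, which rests squarely on~\eqref{theta-log}), and that the resulting boundary square commutes. The remaining bookkeeping --- the passage between the $v$- and pro-\'etale topologies, the degeneration of the Leray spectral sequence for $\so$ on a Stein space, and the fact that every map above is a morphism of solid $\Z_p$-modules --- is routine given the results recalled above, but must be tracked to guarantee that the final sequence is one of solid $\Z_p$-modules.
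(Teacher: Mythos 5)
Your argument is, in substance, a correct re-derivation of Theorem~\ref{image00}, and it follows the same route as the paper's own proof of that theorem: the logarithm sequence gives $\operatorname{Im}(\log_*)=\ker\partial$, the sharp map defines a morphism from the fundamental sequence $0\to\Q_p(1)\to\mathbb{B}^{\varphi=p}\to\mathbb{B}^+_{\dr}/t\to0$ to the logarithm sequence, and Lemma~\ref{sniad3}(\ref{sniad3-2}) together with Proposition~\ref{ajin} identify $\partial\circ{\rm HT}^{-1}$ with $\pi^{i+1}\circ{\rm Exp}\circ\can$; this is exactly the content of the paper's Lemma~\ref{vendredi2} and of the proof of Theorem~\ref{image00}.

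However, the statement you were asked to prove is Corollary~\ref{image1}, which concerns ${\rm HTlog}$ on ${\rm Pic}_v(X)$ and on $H^i_v(X,\so^{\times})$, not ${\rm HTlog}_U$ on $H^i_v(X,U)$, and your proposal never crosses that bridge. Concretely: (a) part (1) requires the inclusion $\operatorname{Im}({\rm HTlog}_U)\subseteq\operatorname{Im}({\rm HTlog})$, which follows from commutativity of the square formed by the canonical map $H^i_v(X,U)\to H^i_v(X,\so^{\times})$ and the two Hodge--Tate logarithms (this is the compatibility recorded in the remark following~\eqref{HTlogU}); you never state or use it, and part (1) is not addressed at all. (b) Part (2) requires not merely an inclusion but the equality $\operatorname{Im}({\rm HTlog})=\operatorname{Im}({\rm HTlog}_{U,1})$, and this is exactly where the hypothesis that $H^1_v(X,U)\to{\rm Pic}_v(X)$ is surjective enters: surjectivity plus the same commutative square gives ${\rm HTlog}({\rm Pic}_v(X))={\rm HTlog}_U(H^1_v(X,U))$, and only then does your exact sequence for $\operatorname{Im}({\rm HTlog}_U)$ become the asserted sequence for $\operatorname{Im}({\rm HTlog})$. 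Your proposal never invokes this hypothesis --- a warning sign that the statement proved is not the statement asked --- so your closing claim that what you obtained ``is exactly the asserted short exact sequence'' is unjustified as written. Both missing steps are one-liners (and are what the paper means by calling the corollary immediate), but they are precisely the content of the corollary as distinct from the theorem, so they must appear.
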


Concerning the first claim of the corollary, note that Heuer already proved in \cite[Cor.\,4.4]{Heu21} that for any smooth rigid space, the image by ${\rm HTlog}$ of ${\rm Pic}_v(X)$ contains all the $df \in \Omega^1(X)$, for $f$ in $\so(X)$. 

\begin{remark}\label{uwagi}
\begin{enumerate}
\item For the affine space ${\bb A}^n_C$, the cohomology group $H^2({\bb A}_C^n, \Z_p)$ is trivial. 
This is why the extra term $\mathcal{I}^1(X)$ does not appear in Heuer's computation. In fact this holds in any degree and we have 
$$
 \Omega^i({\mathbb A}^n_C)^{d=0}(-i) \stackrel{\sim}{\to} {\rm Im}({\rm HTlog}_U). 
$$
\item 
As for smooth Stein curves, it follows from diagram~\eqref{pStein} that the rational cohomology $H^i_{\proeet}(X, \Q_p)$, $i\geq 2$,  is zero, hence $\mathcal{I}^1(X)$ is also trivial in that case. Moreover, $ \Omega^1(X)^{d=0} \stackrel{\sim}{\to}\Omega^1(X)$.
\item Let $i\geq 1$. 
Since we have an exact sequence: 
\[ 
0 \to \Omega^i(X)/{\rm Ker}\,d \xrightarrow{{\rm Exp}} H^{i+1}_{\proeet}(X, \Q_p(i+1)) \to  (H^{i+1}_{{\rm HK}}(X){\otimes}^{\Box}_{\breve{F}}\wh{\B}^+_{\st})^{N=0,\phi=p^{i+1}} \to 0 
\]
we see that the intersection $\mathcal{I}^i(X)$ is zero when the map from $H^{i+1}_{\proeet}(X, \Z_p(i+1))/{T}$, 
where $T$ is the maximal torsion subgroup,  to the Hyodo-Kato term above is injective.  
This will be the case when $X$ is a torus or, more generally,  the analytification of an algebraic variety,  or the Drinfeld upper half space (see below).  

In fact, in general, if we define $\wt{{\mathcal I}}^i(X)(i)$ 
as the kernel of map
\[ H^{i+1}_{\eet}(X, \Z_p(i+1))/T \to (H^{i+1}_{{\rm HK}}(X){\otimes}^{\Box}_{\breve{F}}\wh{\B}^+_{\st})^{N=0,\phi=p^{i+1}}\]
then we have a commutative diagram
 $$
 \xymatrix{  0 \ar[r] &\wt{{\mathcal I}}^i(X)(i)\ar[r] \ar[d]& H^{i+1}_{\eet}(X, \Z_p(i+1))/T \ar[r] \ar[d]^{\can} & (H^{i+1}_{{\rm HK}}(X){\otimes}^{\Box}_{\breve{F}}\wh{\B}^+_{\st})^{N=0,\phi=p^{i+1}}\ar@{=}[d]\\
 0 \ar[r] &  \Omega^i(X)/{\rm Ker}\,d \ar[r]^-{{\rm Exp}} \ar[r]  & H^{i+1}_{\proeet}(X, \Q_p(i+1)) \ar[r] & (H^{i+1}_{{\rm HK}}(X){\otimes}^{\Box}_{\breve{F}}\wh{\B}^+_{\st})^{N=0,\phi=p^{i+1}} \ar[r] &  0,
 }
 $$
where the left square is cartesian. 
 The group $\wt{{\mathcal I}}^i(X)$ surjects onto ${{\mathcal I}}^i(X)$. 
We like to think of the Hyodo-Kato term as carrying $\ell$-adic information, for $\ell\neq p$.
Then  $\wt{{\mathcal I}}^i(X)$ can be seen as
a genuinely $p$-adic phenomena. 
\item 
The groups $\mathcal{I}^j(X)$ need not  be zero in general. For example, they are non-zero for  open unit discs  $D^n_C$ of dimension $n>1$ over $C$. See Section \ref{opendisc} below. 
\end{enumerate}
\end{remark}

\begin{proof}[Proof of Theorem~\ref{image00}]
Recall from   Section~\ref{prelim}, that we can pass from the $v$-topology to the pro-\'etale one without changing the groups $H^i_{?}(X, U)$ and $H^i_{?}(X, \so^{\times})$. We then work on the pro-\'etale site.  
As in \cite[Sec.\,6.2]{Heu21}, we start from the logarithmic exact sequence (point (1) of Lemma~\ref{log}) on the pro-\'etale site: 
\[ 0 \to \Q_p/\Z_p(1) \to U \xrightarrow{\rm log}  \so \to 0.\]
It induces a commutative diagram: 
\begin{equation}\label{ven1}
 \xymatrix{ H^i_{\proeet}(X, U) \ar[r]^-{\rm log} \ar[rd]_{{\rm HTlog}_U} & H^i_{\proeet}(X, \so) \ar[r]^-{\partial_{\rm log}} \ar[r] \ar[d]^-{\rm HT}_{\wr} & H^{i+1}_{\proeet}(X, \Q_p/\Z_p(1)) \\
& \Omega^i(X)(-i), \ar[ru]_{\partial_{\rm log} \circ {\rm HT}^{-1}} &      }
\end{equation}
where the first row is exact. We have used here that $X$ is Stein (the isomorphism ${\rm HT}$ is the one from Remark~\ref{HTstein}). We deduce from the diagram that the image of ${\rm HTlog}_U$ is equal to the kernel of  $\partial_{\rm log} \circ {\rm HT}^{-1}$. We prove in Lemma~\ref{vendredi2} below that the following square commutes: 
\begin{equation}
\label{vendredi}
\xymatrix{ \Omega^i(X)(-i) \ar@{->>}[d] \ar[r]^-{\partial_{\rm log} \circ {\rm HT}^{-1}} & H^{i+1}_{\proeet}(X,\Q_p/\Z_p(1)) \\  
\Omega^i(X)/{\rm Ker}\,d(-i)\ar@{^(->}[r]^{{\rm Exp}} & H^{i+1}_{\proeet}(X, \Q_p(1)). \ar[u]  }
\end{equation} 
It immediately follows that we have an inclusion $\Omega^i(X)^{d=0}(-i) \subset {\rm Ker}(\partial_{\rm log} \circ {\rm HT}^{-1})$. Now, since the kernel of the right vertical map is given by the image of $\iota^{i+1}:  H^{i+1}_{\proeet}(X, \Z_p(1)) \to H^{i+1}_{\proeet}(X, \Q_p(1))$, we get an exact sequence:
\[ 0 \to  \Omega^i(X)^{d=0}(-i) \to {\rm Ker}(\partial_{\rm log} \circ {\rm HT}^{-1}) \xrightarrow{ {\rm Exp}}  {\rm Im}({\rm Exp}) \cap {\rm Im}(\iota^{i+1}) \to 0\]
and this concludes the proof of Theorem~\ref{image00}.
\end{proof}

\begin{lemma}
\label{vendredi2}
The diagram \eqref{vendredi} is commutative. 
\end{lemma}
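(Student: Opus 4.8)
\emph{Strategy.} The plan is to peel off the two ``geometric'' isomorphisms ${\rm HT}$ and ${\rm Exp}$ using Lemma~\ref{sniad3}(\ref{sniad3-2}) and Proposition~\ref{ajin}(1), thereby reducing the commutativity of \eqref{vendredi} to the naturality of a single connecting homomorphism. Concretely, write $q\colon\Omega^i(X)(-i)\twoheadrightarrow(\Omega^i(X)/{\rm Ker}\,d)(-i)$ for the left vertical arrow of \eqref{vendredi} (the right vertical arrow being the canonical map $\pi^{i+1}$), and let ${\rm PL}\colon H^i_{\proeet}(X,\mathbb{B}^+_{\dr}/t)\xrightarrow{\sim}\Omega^i(X)(-i)$ be the Poincar\'e-Lemma isomorphism. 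By Lemma~\ref{sniad3}(\ref{sniad3-2}) we have ${\rm HT}\circ\theta={\rm PL}$, where $\theta$ also denotes the isomorphism on $H^i$ induced by $\theta\colon\mathbb{B}^+_{\dr}/t\xrightarrow{\sim}\so$; hence ${\rm HT}^{-1}=\theta\circ{\rm PL}^{-1}$, and the upper composite of \eqref{vendredi} equals $\partial_{\rm log}\circ{\rm HT}^{-1}=\partial_{\rm log}\circ\theta\circ{\rm PL}^{-1}$. On the other hand ${\rm Exp}\circ q$ is, by definition, the map ${\rm Exp}(-i)$ of Proposition~\ref{ajin}(1), and that proposition gives ${\rm Exp}(-i)=\partial_{\rm BdR}\circ{\rm PL}^{-1}$; thus the lower-then-right composite is $\pi^{i+1}\circ{\rm Exp}\circ q=\pi^{i+1}\circ\partial_{\rm BdR}\circ{\rm PL}^{-1}$. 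Since ${\rm PL}$ is an isomorphism, Lemma~\ref{vendredi2} is therefore equivalent to the identity
\[ \partial_{\rm log}\circ\theta=\pi^{i+1}\circ\partial_{\rm BdR}\colon\quad H^i_{\proeet}(X,\mathbb{B}^+_{\dr}/t)\longrightarrow H^{i+1}_{\proeet}(X,\Q_p/\Z_p(1)). \]

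\emph{Map of short exact sequences.} To prove this last identity I would exhibit a morphism of short exact sequences of pro-\'etale sheaves on $X$:
\[ \xymatrix@R=6mm@C=8mm{
0\ar[r]&\Q_p(1)\ar[r]\ar[d]^{{\rm red}}&\mathbb{B}^{\varphi=p}\ar[r]\ar[d]^{\sharp}&\mathbb{B}^+_{\dr}/t\ar[r]\ar[d]^{\theta}_{\wr}&0\\
0\ar[r]&\Q_p/\Z_p(1)\ar[r]&U\ar[r]^-{\log}&\so\ar[r]&0,
} \]
whose top row is the $r=1$ instance of the fundamental exact sequence \eqref{sesB} and whose bottom row is the logarithm sequence of Lemma~\ref{log}(1). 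The right-hand vertical arrow is the isomorphism $\theta$, and the middle arrow is the sharp (untilt) map $\sharp\colon\mathbb{B}^{\varphi=p}\simeq U^\flat\to U$ from \eqref{theta-log}. The commutativity of the right-hand square is precisely the factorisation recorded in \eqref{theta-log}, namely that the composite $\mathbb{B}^{\varphi=p}\to\mathbb{B}^+_{\dr}/t\xrightarrow[\sim]{\theta}\so$ equals $\log\circ\sharp$. Restricting $\sharp$ to the kernels of the two horizontal surjections then identifies the left-hand vertical arrow with the canonical projection $\Q_p(1)=V_p\mu\twoheadrightarrow\mu_{p^\infty}=\Q_p/\Z_p(1)$ (with kernel $T_p\mu=\Z_p(1)$), i.e.\ with the reduction map inducing $\pi^{i+1}$ on $H^{i+1}$.

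\emph{Conclusion and main obstacle.} Granting this morphism of short exact sequences, the naturality of the connecting map in the long exact cohomology sequence yields the commutative square
\[ \xymatrix@R=6mm@C=10mm{
H^i_{\proeet}(X,\mathbb{B}^+_{\dr}/t)\ar[r]^-{\partial_{\rm BdR}}\ar[d]^{\theta}_{\wr}&H^{i+1}_{\proeet}(X,\Q_p(1))\ar[d]^{\pi^{i+1}}\\
H^i_{\proeet}(X,\so)\ar[r]^-{\partial_{\rm log}}&H^{i+1}_{\proeet}(X,\Q_p/\Z_p(1)),
} \]
which is exactly the identity $\partial_{\rm log}\circ\theta=\pi^{i+1}\circ\partial_{\rm BdR}$ obtained above, and the lemma follows. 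The whole argument is formal once the middle column is in place, so the only non-routine input is the sheaf-theoretic content of \eqref{theta-log}: that $\sharp$ is a well-defined morphism of pro-\'etale sheaves for which the right square commutes, and that it restricts on kernels to the canonical reduction $\Q_p(1)\to\Q_p/\Z_p(1)$ rather than to some twist of it. This is due to Le Bras, and I expect the one genuine point to verify is that no spurious unit or sign is introduced when passing from the universal cover of $\mu_{p^\infty}$ (giving $\mathbb{B}^{\varphi=p}$ together with its subsheaf $\Q_p(1)$) down to $U$ via $\sharp$; everything else is a diagram chase via functoriality of boundary maps.
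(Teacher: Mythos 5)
Your proof is correct and follows essentially the same route as the paper: both reduce the statement to Lemma~\ref{sniad3}(\ref{sniad3-2}) (identifying ${\rm HT}^{-1}$ with $\theta\circ{\rm PL}^{-1}$), Proposition~\ref{ajin}(1) (identifying ${\rm Exp}(-i)\circ{\rm PL}$ with $\partial_{\rm BdR}$), and the naturality of connecting maps applied to the morphism of short exact sequences given by the sharp map $\mathbb{B}^{\varphi=p}\to U$ of \eqref{theta-log} over $\theta\colon\mathbb{B}^+_{\dr}/t\xrightarrow{\sim}\so$. The only cosmetic difference is that the paper assembles these three facts into a single diagram whose outer trapezoid is \eqref{vendredi}, whereas you peel off the isomorphisms algebraically and isolate the identity $\partial_{\rm log}\circ\theta=\pi^{i+1}\circ\partial_{\rm BdR}$; the mathematical content, including the check that the induced map on kernels is the canonical reduction $\Q_p(1)\to\Q_p/\Z_p(1)$, is the same.
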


\begin{proof} 
It suffices to show that we have the commutative  diagram 
\[ \xymatrix{
\Omega^i(X)(-i)  \ar@{=}[d]& \ar[l]_{\rm HT}^{\sim} H^i_{\proeet}(X, \so) \ar[r]^-{\partial_{\rm log}} & H^{i+1}_{\proeet}(X, \Q_p/\Z_p(1)) \\
\Omega^i(X)(-i) \ar@{->>}[d] & \ar[l]_{\rm PL}^{\sim} H^i_{\proeet}(X, \mathbb{B}_{\dr}^+/t)  \ar[u]^{\theta}_{\wr} \ar[r]^{\partial_{\rm BdR}} & H^{i+1}_{\proeet}(X, \Q_p(1)) \ar[u]^{\pi^{i+1}}\\
\Omega^i(X)/{\rm Ker}\,d(-i) \ar[rru]_{\rm Exp} } \]
as the outer trapezoid is exactly the diagram in question. 
The commutativity  of the upper left square is given by Lemma~\ref{sniad3}(\ref{sniad3-2}). The right square comes from the map of short exact sequences: 
\[ \xymatrix{
0 \ar[r] &\Q_p/\Z_p(1) \ar[r] & U \ar[r]^{\rm log} & \so \ar[r] & 0 \\
0 \ar[r] & \Q_p(1) \ar[r] \ar[u]   & \mathbb{B}^{\varphi=p} \ar[r] \ar[u]  & \mathbb{B}_{\rm dR}^+/t\mathbb{B}_{\rm dR}^+ \ar[r] \ar[u]^{\rotatebox{90}{$\sim$}}_{\theta} & 0,}
\]
where the map in the middle is the sharp map appearing in~\eqref{theta-log} (by~\cite[Example~2.22]{LB18}, it makes the right square commutative).    
Hence, it is commutative. The triangle commutes by  Proposition~\ref{ajin}. 
Therefore the outer trapezoid is commutative as well, which is what we wanted.  
\end{proof}

\begin{remark}[Compatibility with Chern classes] \label{chern0}
Let $X$ be a smooth Stein space over $C$. Recall that the pro-\'etale  Chern class map
\begin{equation}
\label{chern1}
 c_1^{\proeet} : H^1_{\proeet}(X, U) \to H^2_{\proeet}(X, \Z_p(1)) 
 \end{equation}
 can be defined 
as the edge morphism coming from the short exact sequence (on $X_{\proeet}$) 
\[0 \to \Z_p(1) \to \lim_{x \mapsto x^p} U \to U \to 0 \]
(which is obtained, by passing to the limit, from  the Kummer exact sequence :
\[ 0 \to \mu_{p^n} \to U \xrightarrow{p^n} U \to 0.) \]
Similarly, we have the  \'etale Chern class map {(obtained by taking the inverse limit of the boundary maps of the Kummer sequences on the \'etale site)}
\[ c_1^{\eet} : H^1_{\eet}(X, U) \to H^2_{\eet}(X, \Z_p(1)).  \]
It is compatible with~\eqref{chern1} via the canonical map $H^2_{\eet}(X, \Z_p(1)) \stackrel{\sim}{\to} H^2_{\proeet}(X, \Z_p(1)) $.

Now,  we can identify the map ${\bb B}^{\varphi=p}\xrightarrow{\theta} \so$ with the map $\lim_{x \mapsto x^p} U \to U  \xrightarrow{\log} \so$ (see~\eqref{theta-log}) and we have a commutative diagram with exact rows (see the proof of Lemma \ref{vendredi2})
\[ \xymatrix{ 0 \ar[r] &  \Q_p(1) \ar[r]&  {\bb B}^{\varphi=p} \ar[r]^{\theta} & \so \ar[r] &  0 \\
  0 \ar[r] & \Z_p(1) \ar[u] \ar[r] & \lim_{x \mapsto x^p} U  \ar[r] \ar[u] & U \ar[u]_{\log} \ar[r]  & 0 } \] 
 Passing to cohomology groups we obtain a commutative diagram
 $$
 \xymatrix{
 H^1_{\proeet}(X,U)\ar[r] ^{c^{\proeet}_1}\ar[d]^{\log}& H^2_{\proeet}(X,\Z_p(1))\ar[d]\\
 H^1_{\proeet}(X,\so)\ar[r]^-{\partial} & H^2_{\proeet}(X,\Q_p(1)).
 }
 $$
 Putting it all together and   using the compatibility between the Hodge-Tate morphism and the Bloch-Kato exponential from Lemma \ref{sniad3}(\ref{sniad3-2}) and Proposition \ref{ajin}, we obtain a commutative diagram: 
 \begin{equation}
 \label{chern2} \xymatrix{ 
 H^1_{\eet}(X, U) \ar@{^(->}[r] \ar[d]_{c_1^{\eet}}& H^1_{\proeet}(X,U)\ar[r]^-{{\rm HTlog}_U} \ar[d]_{c_1^{\proeet}} & \Omega^1(X)(-1)/\kker\,d \ar@{^(->}[d]^{\rm Exp} \\
 H^2_{\eet}(X, \Z_p(1)) \ar[r]^-{\sim} & H^2_{\proeet}(X, \Z_p(1)) \ar[r] & H^2_{\proeet}(X, \Q_p(1)). }
 \end{equation}
 We note that it implies that the image of the rational pro-\'etale Chern class  map ${c_1^{\proeet}}: H^1_{\eet}(X, U)\to H^2_{\proeet}(X, \Q_p(1))$ is always zero. Moreover, we get an exact sequence
 $$
 0\to  H^1_{\eet}(X, U) \to  H^1_{\proeet}(X,U)_0\lomapr{{\rm HTlog}_U} \Omega^1(X)^{d=0} (-1)\to 0 ,
 $$
 where we set $H^1_{\proeet}(X,U)_0:=\kker({c_1^{\proeet}}: H^1_{\proeet}(X,U)\to H^2_{\proeet}(X, \Q_p(1)))$, the subgroup of classes (rationally) homologically trivial. 
 \end{remark}
\begin{remark}
The previous remark  brings us to a question: if we define ${\rm Pic}_v(X)_0$ similarly to $H^1_{\proeet}(X,U)_0$ replacing $U$ by $\mathbb{G}_m$,  
and set ${\rm Pic}_{\rm an}(X)_0:={\rm Pic}_{\rm an}(X)\cap {\rm Pic}_{v}(X)_0$, 
 do we have an exact sequence
$$
 0\to  {\rm Pic}_{\rm an}(X)_0\to    {\rm Pic}_v(X)_0\lomapr{{\rm HTlog}} \Omega^1(X)^{d=0} (-1)\to 0 \quad ?
$$
  To answer this we need to better understand the canonical maps $H^1_{\eet}(X, U)\to {\rm Pic}_{\rm an}(X)$ and $H^1_{\proeet}(X,U)\to {\rm Pic}_v(X)$ for Stein spaces.
\end{remark}

\section{Examples}\label{examples}

Let us look now at some examples of computations of $H^i_{\tau}(X,{\mathbb G}_m)$, for $i=1,2$, 
for certain smooth Stein space $X$ over $C$.

\subsection{Picard group} We start with the Picard group. 

\subsubsection{Curves} 

Smooth Stein rigid analytic varieties $X$ of dimension $1$ were already treated in \cite[Sec.\,4.1]{Heu21}: 
as $H^2_{\eet}(X,\so^{\times})$ vanishes in this case, 
the exact sequence \eqref{even3} becomes 
\begin{equation}\label{nie0}
0\to {\rm Pic}_{\rm an}(X)\to {\rm Pic}_v(X)\lomapr{\rm HTlog} \Omega^1(X)\to 0.
\end{equation}
Moreover, if such a  curve  $X$ is defined over $K$, we have $H^2_{\proeet}(X,\Q_p)=0$ (because $H^2_{\dr}(X)=0$) hence ${\mathcal I}^1(X)=0$. 
 
\subsubsection{Affine space} \label{nie1}
The case of the affine space was treated in  two different ways in \cite[Sec.\,6]{Heu21}. 
Our approach here  is similar to the one presented in \cite[Sec.\,6.2]{Heu21}. 
Let ${\bb A}^n_{C}$ be the rigid analytic affine space of dimension $n$ over $C$. For $i\geq 1$, since $H^i_{\dr}({\mathbb A}^n_C)=0$ and, hence, $H^i_{\hk}({\mathbb A}^n_C)=0$, by diagram \eqref{pStein}, we have an isomorphism
$$
 \Omega^i({\bb A}^n_{C})/{\rm Ker}\,d \stackrel{\sim}{\to} H^{i+1}_{\proeet}({\bb A}^n_{C}, \Q_p(i+1)).
$$
Since $H^{i+1}_{\proeet}({\bb A}^n_{C}, \Z_p(i+1))=0$ (by comparison with the algebraic case),  we have ${\mathcal I}^i({\bb A}^n_{C})=0$. Thus, by Theorem \ref{image00}, we have an isomorphism
\begin{equation}\label{nie12}
{\rm Im}({\rm HTlog_{U,i}}: H^i_v({\bb A}^n_{C},U)\to  \Omega^i({\bb A}^n_{C})(-i))\stackrel{\sim}{\leftarrow} \Omega^i({\bb A}^n_{C})^{d=0}(-i).
\end{equation}
Moreover, by \cite[Lem.\,6.7]{Heu21},   the map from $H^1_v({\bb A}^n_{C},U)$ to the $v$-Picard group is surjective. 
Thus we obtain an exact sequence
\[ 0 \to {\rm Pic}_{\rm an}({\bb A}^n_{C}) \to {\rm Pic}_{v}({\bb A}^n_{C}) \to \Omega^1({\bb A}^n_{C})^{d=0}(-1) \to 0. \]
Since the analytic Picard group of the affine space is trivial\footnote{In fact, by \cite[Ch.\,V.3, Prop.\,2]{Gru68}, every analytic vector bundle on ${\bb A}^d_{C}$ is trivial.}, this implies that the Hodge-Tate logarithm is an isomorphism
$$
{\rm HTlog}:\quad  {\rm Pic}_{v}({\bb A}^n_{C})\stackrel{\sim}{\to} \Omega^1({\bb A}^n_{C})^{d=0}(-1).
$$

\subsubsection{Torus}\label{nie2}

Consider  the rigid analytic torus ${\bb G}^n_{m,C}$ of dimension $n$ over $C$. This case is similar to the case of affine space because the analytic Picard group is trivial (see \cite[Th.\,6.3.3]{FVdP}) but also different because the de Rham cohomology is non-trivial (though of finite rank).  

  For $i\geq 0$, by \eqref{pStein}, we have the exact sequence (see~\cite[Sec.\,4.3.2]{CDN3}): 
\begin{equation}\label{tea1} 0 \to \Omega^i({\bb G}^n_{m,C})/{\rm Ker}\,d \to H^{i+1}_{\proeet}({\bb G}^n_{m,C}, \Q_p(i+1)) \to \wedge^{i+1} \Q^n_p \to 0. 
\end{equation}
 Since we have $H^{i+1}_{\eet}({\bb G}^n_{m,C}, \Z_p(i+1))\simeq  \wedge^{i+1}\Z^n_p$ (compare with the \'etale cohomology of the algebraic torus), we see that the map from the integral cohomology to the Hyodo-Kato term is injective. We used here that the projection from pro-\'etale cohomology to the Hyodo-Kato term is compatible with products and symbol maps: this is because the comparison theorem between the pro-\'etale cohomology and syntomic cohomology and the projection from syntomic cohomology to the Hyodo-Kato term both satisfy these compatibilities. 
We obtain that the intersection between the elements coming from $\Omega^1({\bb G}^n_{m,C})/{\rm Ker}\,d$ and the ones coming from the integral pro-\'etale cohomology is trivial. Thus, by Theorem \ref{image00}, we have an isomorphism
\begin{equation}\label{tea2}
{\rm Im}({\rm HTlog_{U,i}}: H^i_v({\bb G}^n_{m,C},U)\to  \Omega^i({\bb G}^n_{m,C})(-i))\stackrel{\sim}{\leftarrow} \Omega^i({\bb G}^n_{m,C})^{d=0}(-i).
\end{equation}

Moreover, we have: 
\begin{lemma}
\label{surject-U}
The map from $H^1_v({\bb G}^n_{m,C},U)$ to $H^1_v({\bb G}^n_{m,C}, \so^{\times})$ is surjective.
\end{lemma}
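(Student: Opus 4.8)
The plan is to reduce the statement to the vanishing of $H^1_v({\bb G}^n_{m,C},\overline{\so}^{\times})$. The short exact sequence $1\to U\to \so^{\times}\to \overline{\so}^{\times}\to 1$ of $v$-sheaves produces a long exact sequence
\[ H^1_v(X,U)\to H^1_v(X,\so^{\times})\to H^1_v(X,\overline{\so}^{\times}), \]
so the surjectivity we want is implied by $H^1_v(X,\overline{\so}^{\times})=0$. By Proposition \ref{topo}(2) we may pass to the \'etale topology, and it suffices to show $H^1_{\eet}(X,\overline{\so}^{\times})=0$.

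To do this I would use the exponential exact sequence $1\to \so\xrightarrow{\exp}\so^{\times}[\tfrac1p]\to \overline{\so}^{\times}\to 1$ from Lemma \ref{log} on $X_{\eet}$. Since $X$ is Stein, coherent cohomology is acyclic, so $H^i_{\eet}(X,\so)=0$ for $i\ge 1$; the associated long exact sequence then gives an isomorphism $H^1_{\eet}(X,\so^{\times}[\tfrac1p])\xrightarrow{\sim}H^1_{\eet}(X,\overline{\so}^{\times})$. Thus everything reduces to proving $H^1_{\eet}(X,\so^{\times}[\tfrac1p])=0$, and the essential input will be that the analytic Picard group of the torus is trivial by \cite[Th.\,6.3.3]{FVdP}.

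To extract this vanishing I would use the Stein exhaustion $X=\bigcup_n X_n$ by polyannuli $X_n=\{p^{-n}\le |x_i|\le p^n\}$, which are affinoid (hence qcqs) and satisfy $X_n\Subset X_{n+1}$. On each qcqs piece $X_n$ \'etale cohomology commutes with the filtered colimit $\so^{\times}[\tfrac1p]=\varinjlim_{x\mapsto x^p}\so^{\times}$, so $H^1_{\eet}(X_n,\so^{\times}[\tfrac1p])={\rm Pic}_{\eet}(X_n)[\tfrac1p]$, which vanishes since the analytic Picard group of a polyannulus is trivial. Passing to the limit through $\R\Gamma_{\eet}(X,-)\simeq \R\lim_n\R\Gamma_{\eet}(X_n,-)$, the group $H^1_{\eet}(X,\so^{\times}[\tfrac1p])$ sits in an extension of $\lim_n H^1_{\eet}(X_n,\so^{\times}[\tfrac1p])=0$ by $\R^1\lim_n \so^{\times}(X_n)[\tfrac1p]$. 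The main obstacle is then the vanishing of this last $\R^1\lim$: one must verify that the inverse system of unit groups $\{\so^{\times}(X_n)[\tfrac1p]\}_n$ is Mittag-Leffler, which I expect to follow from the explicit description of units on a polyannulus (a constant in $C^{\times}$ times a monomial, modulo $1+\mathfrak m$), the transition maps being surjective on the constant and monomial parts. Granting this, $H^1_{\eet}(X,\overline{\so}^{\times})=0$ and the lemma follows. As an alternative route avoiding these limit computations, one can instead prove directly that ${\rm Ker}(\partial_2)=\Omega^1(X)^{d=0}(-1)$, using that ${\rm HTlog}_1$ is an isomorphism onto ${\rm Ker}(\partial_2)$ by Proposition \ref{ses-leray} together with ${\rm Im}({\rm HTlog}_{U,1})=\Omega^1(X)^{d=0}(-1)$ from \eqref{tea2}.
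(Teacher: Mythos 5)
Your reduction steps (the long exact sequence for $1\to U\to\so^{\times}\to\overline{\so}^{\times}\to1$, passage to the \'etale site via Proposition \ref{topo}(2), the exponential sequence of Lemma \ref{log} together with Stein vanishing of coherent cohomology, and commutation of cohomology with the filtered colimit $\so^{\times}[\tfrac{1}{p}]$ on each quasi-compact piece) are all sound, and they parallel the paper's own proof, which reduces to $H^1_v(X,\overline{\so}^{\times})=0$ and runs the same $\lim/\R^1\lim$ sequence over a Stein covering. The genuine gap is your treatment of the $\R^1\lim$ term: the inverse system $\{\so^{\times}(X_k)[\tfrac{1}{p}]\}_k$ is \emph{not} Mittag-Leffler, and the failure sits exactly in the part your argument ignores, namely the principal units $1+\mathfrak{m}_C\so^{+}(X_k)$. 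Concretely, with $X_k=\{p^{-k}\le|x_i|\le p^k\}$ and $k<m_0<m$, choose $a\in C$ with $p^{-m}\le |a|<p^{-m_0}$. Then $1+ax_1$ is a unit on $X_{m_0}$, so its restriction to $X_k$ lies in the image from level $m_0$; but it is not in the image from level $m$: membership there would mean $F|_{X_k}=\zeta\,(1+ax_1)^{p^j}|_{X_k}$ for some unit $F\in\so^{\times}(X_m)$, some $j\ge 0$ and some $\zeta\in\mu_{p^{\infty}}$, and the identity theorem on the connected space $X_m$ would force $F=\zeta\,(1+ax_1)^{p^j}$ on all of $X_m$, contradicting that the right-hand side vanishes on the locus $\{x_1=-1/a\}\subset X_m$. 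So the images of the transition maps strictly decrease and never stabilize; surjectivity ``on the constant and monomial parts'' is true but does not touch this obstruction, and raising to $p$-th powers (i.e.\ inverting $p$) does not remove it.

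The vanishing $\R^1\lim_k\so^{\times}(X_k)[\tfrac{1}{p}]=0$ that you need is nevertheless true, but it has to be proved by splitting off the principal units rather than by algebraic Mittag-Leffler: from $1\to U[\tfrac{1}{p}]\to \so^{\times}[\tfrac{1}{p}]\to\overline{\so}^{\times}\to 1$ and the isomorphism $U[\tfrac{1}{p}]\simeq\so$ (by Lemma \ref{log}, since the kernel $\Q_p/\Z_p(1)$ of $\log$ dies after inverting $p$), the kernel system becomes $\{\so(X_k)\}_k$, a tower of Banach spaces with dense transition images, whose $\R^1\lim$ vanishes by the \emph{topological} Mittag-Leffler theorem; the quotient system consists of groups of the form $M_k[\tfrac{1}{p}]$ with $M_k$ of finite type and surjective transitions, which is honestly Mittag-Leffler. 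This repair is in essence the paper's proof: the paper works with $\overline{\so}^{\times}$ throughout precisely so that its $\R^1\lim$ term involves only $H^0_v(X_k,\overline{\so}^{\times})=M_k[1/p]$, and the problematic units never enter. Finally, your closing ``alternative route'' is circular as stated: you give no independent argument that ${\rm Ker}(\partial_2)=\Omega^1(X)^{d=0}(-1)$, and in the paper this equality is a \emph{consequence} of the present lemma (via Proposition \ref{ses-leray}, \eqref{tea2} and ${\rm Pic}_{\rm an}({\bb G}^n_{m,C})=0$), not an available input.
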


\begin{proof}
We will show that $H^1_v({\bb G}_{m,C}^n,\overline{\so}^{\times})$ is zero. 
Let  $\{ X_k \}_{k \in \N}$ be  the  Stein covering of ${\bb G}_{m,C}^n$  from \cite[Proof of Th.\,7.1]{Jun21}.    
On each $X_k$ the sheaf  $\so^{\times}_{\rm an}$ is acyclic. 
We have the exact sequence
\begin{equation}\label{kawa11}
0\to \R^1\lim_k H^0_v(X_k,\overline{\so}^{\times})\to H^1_v({\bb G}_{m,C}^n,\overline{\so}^{\times})\to \lim_kH^1_v(X_k,\overline{\so}^{\times})\to 0
\end{equation}
But, by   \cite[Lem.\,2.14]{Heu2}, \cite[Proof of Th.\,7.1]{Jun21}
we have 
\begin{equation}\label{drinfeld10}
H^0_v(X_k,\overline{\so}^{\times})=H^0_{\rm an}(X_k,\overline{\so}^{\times})[1/p]=M_k[1/p],
\end{equation}
where $M_k$ is an abelian group of finite type. Moreover, the maps $M_{k+1}\to M_k$ are surjective, 
and they remain so after inverting $p$. 
We get 
$\R^1\lim_k H^0_v(X_k,\overline{\so}^{\times})=0 $.

We also claim that  $H^1_v(X_k, \overline{\so}^{\times})=0$, for all $k$. 
Indeed, by the point (2) of Proposition~\ref{topo}, we see that it suffices to check this for the \'etale topology. Using the exponential sequence (point (1) of Lemma~\eqref{log}), it is enough to show that 
\[ H^1_{\eet}(X_k, \so^{\times}[\frac{1}{p}])= 0 \text{ and } H_{\eet}^2(X_k, \so)=0. \]
The second equality follows from the fact that $X_k$ is an affinoid. For the first one, 
we use that ${\rm Pic}_{\eet}(X_k)= {\rm Pic}_{\rm an}(X_k)=0$ and since $X_k$ is quasi-compact, we have $H^1_{\eet}(X_k, \so^{\times}[\frac{1}{p}])=H^1_{\eet}(X_k, \so^{\times})[\frac{1}{p}]=0$.  
Our claim   follows. 

Hence
$ \lim_kH^1_v(X_k,\overline{\so}^{\times})=0,
$
and, by \eqref{kawa11}, $H^1_v({\bb G}_{m,C}^n,\overline{\so}^{\times})=0$, as wanted.
\end{proof}
 
Thus we obtain an exact sequence
\[ 0 \to {\rm Pic}_{\rm an}({\bb G}^n_{m,C}) \to {\rm Pic}_{v}({\bb G}^n_{m,C}) \to \Omega^1({\bb G}^n_{m,C})^{d=0}(-1) \to 0. \]
Since the analytic Picard group of the torus is trivial, this means that the Hodge-Tate logarithm is an isomorphism
\begin{equation}
\label{torus}
{\rm HTlog}: \quad {\rm Pic}_{v}({\bb G}^n_{m,C})\stackrel{\sim}{\to}\Omega^1({\bb G}^n_{m,C})^{d=0}(-1).
\end{equation}
\begin{remark}
Let $n_1,n_2\geq 0$. Combining \eqref{torus} and \cite[Th.\,6.1]{Heu21}, we obtain an isomorphism
\begin{equation}\label{tea3}
{\rm HTlog}:\quad {\rm Pic}_{v}({\bb G}^{n_1}_{m,C}\times{\mathbb A}^{n_2}_C)\stackrel{\sim}{\to}\Omega^1({\bb G}^{n_1}_{m,C}\times{\mathbb A}^{n_2}_C)^{d=0}(-1).
\end{equation}
This isomorphism can be also obtained arguing as above,  in the case of the torus. 
More precisely, since de Rham cohomology satisfies a K\"unneth formula, 
replacing ${\bb G}^{n}_{m,C}$ with ${\bb G}^{n_1}_{m,C}\times{\mathbb A}^{n_2}_C$ yields an analogue of the exact sequence \eqref{tea1} 
and then also an analogue of 
the isomorphism \eqref{tea2}. 
The rest of the argument goes through yielding \eqref{tea3}.
\end{remark}
\begin{remark}
The following two examples use Hyodo-Kato GAGA and an almost proper $C_{\st}$-comparison theorem which are proved in the forthcoming PhD thesis (from Sorbonne University) of Xinyu Shao. The semistable reduction case of the latter was shown 
in \cite[Cor.\,1.10]{CN1}. 
\end{remark}
\subsubsection{Analytification of algebraic varieties} 

The examples of the affine space and the torus generalise. Let $X$ be the analytification of an affine  smooth algebraic variety $X^{\rm alg}$ over $C$. 
Then $X$ is Stein.  
In this case both the algebraic and the analytic de Rham cohomologies are of finite rank (they are functorially  isomorphic but not as filtered objects). 
Let $i\geq 1$. 
We have a commutative diagram (see Remark \ref{uwagi})
 $$
 \xymatrix{ &0\ar[r] & H^{i+1}_{\eet}(X^{\rm alg}, \Z_p(i+1))/T \ar[r] \ar[d]^{\can}_{\wr} & (H^{i+1}_{{\rm HK}}(X^{\rm alg}){\otimes}^{\Box}_{\breve{F}}\wh{\B}^+_{\st})^{N=0,\phi=p^{i+1}}\ar[d]^{\can}_{\wr}\\
  0 \ar[r] &\wt{{\mathcal I}}^i(X)(i)\ar[r] \ar[d]& H^{i+1}_{\eet}(X, \Z_p(i+1))/T \ar[r] \ar[d]^{\can} & (H^{i+1}_{{\rm HK}}(X){\otimes}^{\Box}_{\breve{F}}\wh{\B}^+_{\st})^{N=0,\phi=p^{i+1}}\ar@{=}[d]\\
 0 \ar[r] &  \Omega^i(X)/{\rm Ker}\,d \ar[r]^-{{\rm Exp}} \ar[r]  & H^{i+1}_{\proeet}(X, \Q_p(i+1)) \ar[r] & (H^{i+1}_{{\rm HK}}(X){\otimes}^{\Box}_{\breve{F}}\wh{\B}^+_{\st})^{N=0,\phi=p^{i+1}} \ar[r] &  0.
 }
 $$
The rows are exact. 
For the top row this follows from the algebraic $p$-adic comparison theorems \cite[Sec.\,3.2 and Sec.\,3.3]{Bei1}. 
The top square commutes by the compatibility of the algebraic and analytic $p$-adic comparison morphisms. 
This fact and the proof of the isomorphism  between the algebraic and the analytic Hyodo-Kato cohomologies can be found  in \cite{Shao}. 
It follows that 
 $\wt{{\mathcal I}}^i(X)=0$ and hence ${{\mathcal I}}^i(X)=0$. We have proved:
 \begin{corollary}\label{morn12}Let $X$ be the analytification of an affine  smooth algebraic variety over $C$.  Let $i\geq 1$. Then 
${{\mathcal I}}^i(X)=0$ and we have an isomorphism
$$
{\rm Im}({\rm HTlog_{U,i}}: H^i_v(X,U)\to  \Omega^i(X)(-i))\stackrel{\sim}{\leftarrow} \Omega^i(X)^{d=0}(-i).
$$
 \end{corollary}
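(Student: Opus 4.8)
The plan is to reduce the statement to a vanishing of the refined group $\wt{\mathcal I}^i(X)$, and then to establish that vanishing by comparison with the underlying algebraic variety $X^{\rm alg}$. By the exact sequence of Theorem~\ref{image00}, the asserted isomorphism $\Omega^i(X)^{d=0}(-i)\xrightarrow{\sim}{\rm Im}({\rm HTlog}_{U,i})$ is equivalent to $\mathcal I^i(X)=0$, so it is enough to prove this vanishing. Since $\wt{\mathcal I}^i(X)$ surjects onto $\mathcal I^i(X)$ by Remark~\ref{uwagi}(3), it suffices in turn to show $\wt{\mathcal I}^i(X)=0$. Unwinding the definition of $\wt{\mathcal I}^i(X)$, this is precisely the injectivity of the canonical map
\[
\can\colon H^{i+1}_{\eet}(X,\Z_p(i+1))/T\longrightarrow (H^{i+1}_{{\rm HK}}(X)\otimes^{\Box}_{\breve{F}}\wh{\B}^+_{\st})^{N=0,\phi=p^{i+1}}.
\]

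To prove this injectivity I would place the middle row of the diagram in Remark~\ref{uwagi}(3) on top of the corresponding row for the algebraic variety $X^{\rm alg}$. The essential input is the algebraic $p$-adic comparison theorem of Beilinson \cite{Bei1}, which yields the algebraic fundamental sequence and, in particular, shows that the algebraic analogue of $\can$ is injective; equivalently the top row
\[
0\to H^{i+1}_{\eet}(X^{\rm alg},\Z_p(i+1))/T\to (H^{i+1}_{{\rm HK}}(X^{\rm alg})\otimes^{\Box}_{\breve{F}}\wh{\B}^+_{\st})^{N=0,\phi=p^{i+1}}
\]
is exact. To transport this injectivity to the analytic side, I would invoke two GAGA-type comparisons showing that the vertical maps in the top square are isomorphisms: the \'etale comparison $H^{i+1}_{\eet}(X^{\rm alg},\Z_p(i+1))\xrightarrow{\sim}H^{i+1}_{\eet}(X,\Z_p(i+1))$ (matching integral lattices, hence also the torsion subgroups $T$), and the Hyodo--Kato GAGA $H^{i+1}_{{\rm HK}}(X^{\rm alg})\xrightarrow{\sim}H^{i+1}_{{\rm HK}}(X)$. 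Granting commutativity of the top square, injectivity of the algebraic horizontal map forces injectivity of $\can$, whence $\wt{\mathcal I}^i(X)=0$ and finally $\mathcal I^i(X)=0$.

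The technical heart, and the step I expect to be the main obstacle, is the comparison between algebraic and analytic Hyodo--Kato cohomology together with the compatibility of the two $p$-adic comparison morphisms, that is, the commutativity of the top square. These are exactly the statements furnished by the Hyodo--Kato GAGA and the almost proper $C_{\st}$-comparison theorem of \cite{Shao} (the semistable reduction case being \cite[Cor.\,1.10]{CN1}); one must also ensure that the \'etale comparison identifies the integral $\Z_p$-lattices and their torsion across analytification. Once these inputs are in place, everything else is formal: the diagram chase above gives $\mathcal I^i(X)=0$, and feeding this back into Theorem~\ref{image00} yields the isomorphism of the corollary.
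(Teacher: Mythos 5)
Your proposal is correct and follows essentially the same route as the paper: reduce via Theorem~\ref{image00} and Remark~\ref{uwagi}(3) to the vanishing of $\wt{{\mathcal I}}^i(X)$, then deduce injectivity of the map to the Hyodo--Kato term by stacking the analytic fundamental sequence under the algebraic one, using Beilinson's comparison for exactness of the algebraic row and the \'etale and Hyodo--Kato GAGA isomorphisms plus the compatibility of comparison morphisms from \cite{Shao} for the vertical identifications. The paper's own argument is exactly this diagram chase, so there is nothing to add.
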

 
 \subsubsection{Almost proper varieties} 
 Even more generally, let $X$ be a smooth rigid analytic variety over $C$ that is of the form  $X=Y\setminus Z$, where $Y$ is a proper and smooth rigid analytic variety over $C$ and $Z$ is a closed rigid analytic subvariety of $Y$. Assume that $X$ is Stein. 
 Let $i\geq 1$. We have a commutative diagram
 \begin{equation}\label{pies1}
 \xymatrix{ 
  0 \ar[r] &\wt{{\mathcal I}}^i(X)(i)\ar[r] \ar[d]& H^{i+1}_{\eet}(X, \Z_p(i+1))/T \ar[r]^-{\alpha} \ar[d]^{\can} & (H^{i+1}_{{\rm HK}}(X){\otimes}^{\Box}_{\breve{F}}\wh{\B}^+_{\st})^{N=0,\phi=p^{i+1}}\ar@{=}[d]\\
 0 \ar[r] &  \Omega^i(X)/{\rm Ker}\,d \ar[r]^-{{\rm Exp}} \ar[r]  & H^{i+1}_{\proeet}(X, \Q_p(i+1)) \ar[r] & (H^{i+1}_{{\rm HK}}(X){\otimes}^{\Box}_{\breve{F}}\wh{\B}^+_{\st})^{N=0,\phi=p^{i+1}} \ar[r] &  0.
 }
 \end{equation}
  The rows are exact.  The map $\alpha$  fits into the following commutative diagram
 $$
 \xymatrix{H^{i+1}_{\eet}(X, \Z_p(i+1))/T\ar[rd]^-{\alpha}\ar@{^(->}[d] \\
 H^{i+1}_{\eet}(X, \Q_p(i+1)) \ar@{^(->}[r]^-{\alpha_{\Q_p}}  & (H^{i+1}_{{\rm HK}}(X){\otimes}^{\Box}_{\breve{F}}\wh{\B}^+_{\st})^{N=0,\phi=p^{i+1}},}
 $$
 where the map $\alpha_{\Q_p}$ comes from    the standard (almost proper)  rational $C_{\rm pst}$ comparison theorem and we know that it is injective (proved in \cite{Shao}). The vertical map is injective because $ H^{i+1}_{\eet}(X, \Q_p(i+1))\simeq
  H^{i+1}_{\eet}(X, \Z_p(i+1))\otimes_{\Z_p}\Q_p$. It follows that the map $\alpha$ is injective as well. 
Hence, diagram  \eqref{pies1} implies
 that 
 $\wt{{\mathcal I}}^i(X)=0$ and, thus,   ${{\mathcal I}}^i(X)=0$. We have proved an analogue of Corollary \ref{morn12} for $X$.
 
\subsubsection{Drinfeld space} \label{nie3}In this example, the analytic Picard group is also trivial (see \cite[Th.\,A]{Jun21})  but the de Rham cohomology is not of finite rank anymore.
For $n$ an integer, the Drinfeld space over $K$ of dimension $n$ is defined by \[ {\bb H}^n_K:={\bb P}^n_K\setminus \bigcup_{H \in \mathcal{H} } H \] where $\mathcal{H}:={\bb P}((K^{n+1})^{\times})$ denotes the set of the $K$-rational hyperplanes in the rigid-analytic projective space ${\bb P}^n_K$.

For $\Lambda$ a topological ring, we denote by ${\rm Sp}_r(\Lambda)$ the associated generalised Steinberg representation and by ${\rm Sp}_r(\Lambda)^*$ its dual. Recall that we have the following computation:

\begin{proposition}\cite[Th.\,1.3]{CDN3}\cite[Th.\,1.1]{CDN4} 
\label{drinfeld1}
Let $i \ge 0$. Then:
\begin{enumerate}
\item There is an exact  sequence: 
\[ 0 \to \Omega^{i-1}({\bb H}_C^n)/{\rm Ker}\,d \to H^i_{\proeet}({\bb H}_C^n, \Q_p(i)) \to {\rm Sp}_i(\Q_p)^* \to 0. \]
\item There are  isomorphisms:  
\[ H^i_{\eet}({\bb H}_C^n, \Z_p(i)) \simeq {\rm Sp}_i(\Z_p)^* \quad \text{ and } \quad H^i_{\eet}({\bb H}_C^n, \Q_p(i)) \simeq {\rm Sp}^{\rm cont}_i(\Q_p)^*. \]
\item The above morphisms are compatible, i.e. there is a commutative diagram 
\[ \xymatrix{ H^i_{\eet}({\bb H}_C^n, \Z_p(i))\otimes_{\Z_p} \Q_p \ar[r]^-{\sim} \ar[d]_{\rotatebox{90}{$\sim$}} &H^i_{\eet}({\bb H}_C^n, \Q_p(i)) \ar[d]_{\rotatebox{90}{$\sim$}} \ar[r]  & H^i_{\proeet}({\bb H}_C^n, \Q_p(i)) \ar@{->>}[d] \\
{\rm Sp}_i(\Z_p)^*\otimes_{\Z_p} \Q_p  \ar[r]^-{\sim} &{\rm Sp}^{\rm cont}_i(\Q_p)^* \ar@{^(->}[r] & {\rm Sp}_i(\Q_p)^*.} \]
\end{enumerate}
\end{proposition}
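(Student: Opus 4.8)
The plan is to derive all three parts from the general structure theory for Stein spaces established above, fed by the classical computation of the cohomology of the Drinfeld space in terms of generalized Steinberg representations.

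For part (1), since ${\bb H}^n_C$ is Stein and smooth, I would feed it directly into the fundamental exact sequence extracted from the top row of diagram~\eqref{pStein}, namely
$$
0 \to \Omega^{i-1}({\bb H}^n_C)/{\rm Ker}\,d \xrightarrow{\rm Exp} H^i_{\proeet}({\bb H}^n_C,\Q_p(i)) \to (H^i_{{\rm HK}}({\bb H}^n_C){\otimes}^{\Box}_{\breve{F}}\wh{\B}^+_{\st})^{N=0,\phi=p^i} \to 0.
$$
The whole content then lies in identifying the Hyodo--Kato term on the right with ${\rm Sp}_i(\Q_p)^*$. For this I would invoke the Schneider--Stuhler computation, which identifies $H^i_{\dr}({\bb H}^n_C)$ with the dual generalized Steinberg representation, together with the explicit description of the $(\phi,N)$-module $H^i_{{\rm HK}}({\bb H}^n_C)$: the cohomology is generated by cup products of the classes $d\log \ell$ of the linear forms $\ell$ cutting out the hyperplanes, so the monodromy $N$ vanishes and, on a $\Q_p$-rational structure, Frobenius acts by $p^i$ (this is the normalization built into the Tate twist). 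The key step is then the slope computation showing that imposing $N=0$ and $\phi=p^i$ collapses the $\wh{\B}^+_{\st}$-factor down to $\Q_p$ and returns exactly ${\rm Sp}_i(\Q_p)^*$.

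For part (2), the \'etale cohomology is not governed by \eqref{pStein} and must be computed independently. Here I would use the combinatorial structure of the Drinfeld space: either the Schneider--Stuhler resolution of the constant sheaf by sheaves indexed by the simplices of the Bruhat--Tits building, or equivalently the weight spectral sequence attached to the standard semistable formal model whose special fibre is a union of rational varieties indexed by that building. In either guise the combinatorics of the building produces the generalized Steinberg representation, and tracking the integral coefficients yields the integral dual ${\rm Sp}_i(\Z_p)^*$; inverting $p$ and passing to the continuous dual gives the rational statement ${\rm Sp}^{\rm cont}_i(\Q_p)^*$.

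For part (3), I would simply assemble the maps. The isomorphism $H^i_{\eet}({\bb H}^n_C,\Z_p(i))\otimes_{\Z_p}\Q_p \xrightarrow{\sim} H^i_{\eet}({\bb H}^n_C,\Q_p(i))$ expresses that the integral cohomology is a lattice, matching ${\rm Sp}_i(\Z_p)^*\otimes_{\Z_p}\Q_p\xrightarrow{\sim}{\rm Sp}^{\rm cont}_i(\Q_p)^*$; the canonical map from \'etale to pro-\'etale cohomology then corresponds, on the Steinberg side, to the inclusion ${\rm Sp}^{\rm cont}_i(\Q_p)^*\hookrightarrow {\rm Sp}_i(\Q_p)^*$ of the smooth (continuous) dual into the full $p$-adic dual. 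The main obstacle I anticipate is twofold: the precise $(\phi,N)$-module and slope analysis in part~(1), and, in part~(3), the representation-theoretic matching showing that the image of rational \'etale cohomology is exactly the smaller continuous dual inside ${\rm Sp}_i(\Q_p)^*$, which reflects the density/completion gap between the smooth and the full $p$-adic duals of the Steinberg representation.
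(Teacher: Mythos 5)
First, a structural remark: the paper does not prove Proposition \ref{drinfeld1} at all --- it is imported, with citations, from \cite[Th.\,1.3]{CDN3} and \cite[Th.\,1.1]{CDN4}, and is used downstream as a black box. So your proposal is in effect a sketch of the main theorems of those two papers, and the question is whether your sketch would genuinely establish them. For part (1), your outline does follow the actual route of \cite{CDN3}: the exact sequence is the top row of diagram \eqref{pStein} applied to the Stein space ${\bb H}^n_C$, and the content is the identification of $(H^i_{{\rm HK}}({\bb H}_C^n){\otimes}^{\Box}_{\breve{F}}\wh{\B}^+_{\st})^{N=0,\phi=p^i}$ with ${\rm Sp}_i(\Q_p)^*$. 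This rests on the computation $H^i_{{\rm HK}}({\bb H}_C^n)\simeq {\rm Sp}_i(\breve{F})^*$ with $\phi=p^i$ on the natural $\Q_p$-structure and $N=0$ (single slope forces $N=0$ since $N\phi=p\phi N$), after which $(\wh{\B}^+_{\st})^{N=0,\phi=1}=\Q_p$ collapses the period factor, exactly as you say. Two caveats: Frobenius acting by $p^i$ is \emph{not} ``built into the Tate twist''; it is a theorem about ${\bb H}^n$ (the Hyodo--Kato cohomology is generated by cup products of $d\log$ classes of the linear forms, i.e.\ by symbols), and this Hyodo--Kato computation is itself the hard core of \cite{CDN3}, which your sketch defers rather than proves.

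The genuine gap is in part (2), and it propagates to part (3). You propose to obtain $H^i_{\eet}({\bb H}_C^n, \Z_p(i)) \simeq {\rm Sp}_i(\Z_p)^*$ from the Schneider--Stuhler resolution or from the weight spectral sequence of the standard semistable model, ``tracking the integral coefficients''. These are $\ell\neq p$ techniques: Schneider--Stuhler's \'etale computation and the classical weight spectral sequence compute $\ell$-adic nearby cycles for $\ell$ prime to the residue characteristic, and they say nothing about $p$-adic \'etale cohomology of a $p$-adic rigid space --- this failure is precisely why the rational $p$-adic computation had to wait for \cite{CDN3} and the integral one for \cite{CDN4}. The actual proof of (2) goes through integral $p$-adic Hodge theory: one computes $p$-adic nearby cycles on the semistable formal model via the syntomic comparison of Colmez--Nizio{\l} \cite{CN1}, controls the $p$-torsion (torsion-freeness of $H^i_{\eet}({\bb H}_C^n,\Z_p(i))$ is a delicate \emph{output} of that argument, not something one can track formally through a spectral sequence), and only then does the building combinatorics produce ${\rm Sp}_i(\Z_p)^*$. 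Likewise, in part (3) the commutativity of the right-hand square is not obtained by ``assembling the maps'': identifying the image of $H^i_{\eet}({\bb H}_C^n,\Q_p(i))$ inside $H^i_{\proeet}({\bb H}_C^n,\Q_p(i))$ with ${\rm Sp}^{\rm cont}_i(\Q_p)^*$ requires knowing that the integral/\'etale and pro-\'etale computations are induced by compatible symbol maps into the respective syntomic complexes, which is again part of the content of \cite{CDN4}, not a formality.
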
 

As in the case of torus, we see that the map from $H^i_{\proeet}({\bb H}_C^n, \Z_p(i))$ to the Hyodo-Kato term is injective\footnote{The same argument concerning compatibilities applies.}, and we deduce that the intersection $\mathcal{I}^i({\bb H}_C^n)$ is zero. Moreover, we have: 

\begin{lemma}
\label{surject1-U}
The map from $H^1_v({\bb H}_C^n,U)$ to $H^1_v({\bb H}_C^n, \so^{\times})$ is surjective.
\end{lemma}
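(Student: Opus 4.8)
The plan is to follow verbatim the strategy of the proof of Lemma~\ref{surject-U}, reducing the surjectivity of $H^1_v({\bb H}_C^n, U) \to H^1_v({\bb H}_C^n, \so^{\times})$ to the vanishing of $H^1_v({\bb H}_C^n, \overline{\so}^{\times})$. Indeed, the short exact sequence of $v$-sheaves $1 \to U \to \so^{\times} \to \overline{\so}^{\times} \to 1$ yields a long exact cohomology sequence in which the cokernel of $H^1_v(U) \to H^1_v(\so^{\times})$ injects into $H^1_v(\overline{\so}^{\times})$; hence it suffices to prove that this last group is zero.

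To compute $H^1_v({\bb H}_C^n, \overline{\so}^{\times})$ I would fix a Stein affinoid covering $\{X_k\}_{k \in \N}$ of the Drinfeld space by the standard affinoids studied in \cite{Jun21}, on each of which the analytic sheaf $\so^{\times}_{\rm an}$ is acyclic and ${\rm Pic}_{\rm an}(X_k) = 0$. The $\R\lim$-exact sequence then reads
\[ 0 \to \R^1\lim_k H^0_v(X_k, \overline{\so}^{\times}) \to H^1_v({\bb H}_C^n, \overline{\so}^{\times}) \to \lim_k H^1_v(X_k, \overline{\so}^{\times}) \to 0, \]
so the claim follows once both outer terms are shown to vanish.

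For the $\R^1\lim$ term I would invoke the identification $H^0_v(X_k, \overline{\so}^{\times}) = H^0_{\rm an}(X_k, \overline{\so}^{\times})[1/p] = M_k[1/p]$ from \cite[Lem.\,2.14]{Heu2}, where $M_k$ is an abelian group of finite type whose transition maps are surjective and remain so after inverting $p$; this forces $\R^1\lim_k H^0_v(X_k, \overline{\so}^{\times}) = 0$. For the $\lim$ term I would show $H^1_v(X_k, \overline{\so}^{\times}) = 0$ for each $k$: by Proposition~\ref{topo}(2) this group coincides with its \'etale analogue, and the exponential sequence of Lemma~\ref{log} reduces the vanishing to $H^1_{\eet}(X_k, \so^{\times}[1/p]) = 0$ together with $H^2_{\eet}(X_k, \so) = 0$. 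The second holds because $X_k$ is affinoid, while the first follows from ${\rm Pic}_{\eet}(X_k) = {\rm Pic}_{\rm an}(X_k) = 0$ and quasi-compactness, giving $H^1_{\eet}(X_k, \so^{\times}[1/p]) = H^1_{\eet}(X_k, \so^{\times})[1/p] = 0$.

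The only place where the Drinfeld space genuinely differs from the torus, and hence the main obstacle, is the geometric input about the covering: one must have at hand a Stein exhaustion $\{X_k\}$ whose terms are affinoids with trivial analytic Picard group and whose groups $M_k = H^0_{\rm an}(X_k, \overline{\so}^{\times})$ are of finite type with surjective transitions. For the torus this was extracted from \cite{Jun21}; for ${\bb H}_C^n$ I would likewise rely on the explicit standard affinoids of the Drinfeld space from \cite{Jun21}, verifying the required acyclicity of $\so^{\times}_{\rm an}$ and the finiteness of $M_k$. Once these facts about the covering are established, the remainder of the argument is formally identical to that of Lemma~\ref{surject-U}.
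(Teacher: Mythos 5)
Your proposal coincides with the paper's intended argument: the paper's entire proof of Lemma~\ref{surject1-U} is the single line ``Analogous to the proof of Lemma~\ref{surject-U}'', and your write-up is exactly that analogy carried out --- the same reduction to the vanishing of $H^1_v({\bb H}_C^n,\overline{\so}^{\times})$ via the sequence $1\to U\to \so^{\times}\to\overline{\so}^{\times}\to 1$, the same $\R\lim$ exact sequence over a Stein affinoid exhaustion, the $\R^1\lim$ vanishing via the finite-type groups $M_k[1/p]$ from \cite[Lem.\,2.14]{Heu2}, and the \'etale/exponential-sequence argument for $H^1_v(X_k,\overline{\so}^{\times})=0$. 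You also correctly identify that the only input specific to the Drinfeld space is the existence of such a covering with ${\rm Pic}_{\rm an}(X_k)=0$ and finite-type $M_k$ with surjective transitions, which is precisely what \cite{Jun21} supplies.
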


\begin{proof}
Analogous to the proof of Lemma \ref{surject-U}. 
\end{proof}

 Using Lemma \ref{surject1-U}, 
 we get an exact sequence: 
\[ 0 \to {\rm Pic}_{\rm an}({\bb H}_C^n) \to {\rm Pic}_{v}({\bb H}_C^n) \to \Omega^1({\bb H}_C^n)^{d=0}(-1) \to 0. \]
Since the analytic Picard group of the Drinfeld space is trivial, finally, we obtain an isomorphism 
\begin{equation}
\label{drinfeld}
{\rm HTlog}:\quad {\rm Pic}_v({\bb H}_C^n) \stackrel{\sim}{\to}\Omega^1({\bb H}_C^n)^{d=0}(-1).
\end{equation}

\subsubsection{Open disc}\label{opendisc}

 Let now $n>1$. Consider  the open unit disc $D^n$ of dimension $n$ over $C$. We will prove that the intersection $\mathcal{I}^1(D^n)$ is nonzero, which shows that the image of the Hodge-Tate logarithm need not  be reduced to the closed differentials in general. 

  Write $D^n=D_1\times_CD_2$, where $D_1$, $D_2$ are open unit discs of dimension $1$ and $n-1$, respectively. Choose functions $f_i\in\so^*(D_{i})$.  We have $d \log f_i\in \Omega^1(D_{i})$, $\omega:=d \log f_1\wedge d \log f_2\in \Omega^2(D^n)$, and clearly $d\omega=0$, i.e.,  $\omega\in \Omega^2(D^n)^{d=0}$. We note that, since de Rham cohomology of $D^n$ is trivial in positive degrees, we have the isomorphism
$$
d: \Omega^1(D^n)/{\rm Ker}\,d\stackrel{\sim}{\to}\Omega^2(D^n)^{d=0}.
$$
and, from diagram \ref{pStein}, the commutative diagram
  $$
  \xymatrix{
  \Omega^1(D^n)/{\rm Ker}\,d\ar[r]_-{\sim}^-{\rm Exp} \ar@{=}[d] & H^2_{\proeet}(D^n,\Q_p(2))\ar[d]^-{{\rm dLog}}_{\wr}\\
  \Omega^1(D^n)/{\rm Ker}\,d\ar[r]_-{\sim}^{d} & \Omega^2(D^n)^{d=0}.
  }
  $$
  
Let now $\delta(f_1), \delta(f_2)$ be the images by the Kummer maps $\delta: \so^*(D_{i})\to H^1_{\eet}(D_{i},\Z_p(1))$ of $f_1,f_2$. Then $\delta(f_1)\cup \delta(f_2)\in H^2_{\eet}(D^n,\Z_p(2))$. (Here we abuse the notation slightly.) 
  We claim that the image of $\delta(f_1)\cup \delta(f_2)$ in $ \Omega^2(D^n)^{d=0}$ is equal to  $\omega$. Indeed, we compute
  \begin{align}\label{morning1}
  d{\rm Log}(\delta(f_1)\cup\delta(f_2))= d{\rm Log}(\delta(f_1))\cup d{\rm Log}(\delta(f_2))=d\log f_1\cup d \log f_2=\omega.
  \end{align}
  The first equality follows from the fact that the map $d{\rm Log}$ commutes with cup products: it is defined using comparison with syntomic cohomology and the composition
  $$
  H^2_{\proeet}(D^n,\Q_p(2))\to H^2(F^2\R\Gamma_{\dr}(D^n/\B^+_{\dr}))\stackrel{\theta}{\to} \Omega^2(D^n)^{d=0};
  $$
  both of which commute with cup products. The second equality in \eqref{morning1} is the compatibility of the \'etale and de Rham symbol maps: the symbol maps are induced by the first Chern class maps and the passage from \'etale cohomology to syntomic one as well as the projection from the latter to the filtered de Rham cohomology are both compatible with the Chern class maps. 
  
  Now, it suffices to make sure that $\omega\neq 0$. But, for that it is enough to choose nonconstant functions $f_1, f_2$. 
 An analogous argument will show that $\mathcal{I}^i(D^n)$ is nonzero for all $n-1\ge i\ge 1$. 
 
\begin{remark}
 This example is a curious one: de Rham cohomology is trivial in positive degrees but the analytic Picard group depends on the ground field (it will be non-trivial in our case).  More precisely, recall that we have the following result (see \cite[Prop.\,3.5]{Sig17}):
\begin{proposition}  {\rm (\cite[Ch.\,V, Prop.\,2]{Gru68})} Let $L$   be a complete, non-archimedean, non-trivially valued field. Let 
$r \in (|L| \cup \{\infty\})^n$  and let $X_r$ be an open polydisc of
polyradius $r$:
$$X_r =\bigcup_{|\eta_i|<r_i}{\rm Sp}(L<\eta_1^{-1}T_1,\cdots, {\eta_n}^{-1}T_n>).
$$
The Picard group ${\rm Pic}(X_r)$ is trivial if and only if one of the following holds:
\begin{enumerate}
\item  the field $L$ is spherically complete or
\item  the polyradius is $r = (\infty, . . . ,\infty)$, that is, $X_r = {\mathbb A}^n_L$ is the analytic affine space.
\end{enumerate}
\end{proposition}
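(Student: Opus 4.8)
The plan is to turn the statement into a $\varprojlim^{1}$ computation and to recognise the resulting obstruction as exactly the one that measures spherical completeness of $L$. First I would fix a rational exhaustion $X_r=\bigcup_{k}X_k$ by closed polydiscs $X_k=\mathrm{Sp}(L\langle (\eta^{(k)})^{-1}T\rangle)$ with $X_k\Subset X_{k+1}$ and polyradii $\rho^{(k)}\uparrow r$. Each $X_k$ has \emph{trivial} Picard group, since its ring of functions is a rescaling of the Tate algebra $L\langle T_1,\dots,T_n\rangle$, which is a regular UFD, so every finitely generated projective module is free and $\mathrm{Pic}(X_k)=0$. Feeding this into the Milnor (Mittag-Leffler) exact sequence attached to the exhaustion for the sheaf $\mathcal{O}^{\times}$ collapses the $\varprojlim$ term and yields
\[
{\rm Pic}(X_r)\;\cong\;\varprojlim{}^{1}_{k}\,\mathcal{O}^{\times}(X_k),
\]
with transition maps the restriction maps. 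Everything thus reduces to deciding when this first derived limit vanishes.

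Next I would simplify the tower of units. Every unit on $X_k$ is a nonzero constant times a principal unit $1+g$ with $\|g\|_{k}<1$; the constant part contributes the \emph{constant} inverse system $\{L^{\times}\}$, whose transition maps are the identity and hence whose $\varprojlim^{1}$ vanishes. So I may replace $\mathcal{O}^{\times}(X_k)$ by the group of principal units $1+\mathfrak m(X_k)$, where $\mathfrak m(X_k)=\{f\in\mathcal{O}(X_k):\|f\|_{k}<1\}$. Passing to a suitable open subgroup on which the $p$-adic logarithm is an isomorphism onto an additive subgroup (automatic in residue characteristic $0$, and harmless for the vanishing question in general, as it does not affect whether $\varprojlim^{1}$ is zero), I reduce to the \emph{additive} tower $\{\mathfrak m(X_k)\}$ under restriction.

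The heart of the argument is then coefficient-wise. Writing $f=\sum_{\nu}b_{\nu}T^{\nu}$, both the norm condition $\|f\|_{k}<1$ (i.e.\ $|b_{\nu}|(\rho^{(k)})^{\nu}<1$ for all $\nu$) and the restriction maps are diagonal in the coefficients, so the coboundary equation $f_k=u_k-\mathrm{res}(u_{k+1})$ decouples over $\nu$ into scalar problems for the towers of balls $B^{(k)}_{\nu}=\{x\in L:|x|<(\rho^{(k)})^{-\nu}\}$, whose radii satisfy $(\rho^{(k)})^{-\nu}\downarrow r^{-\nu}$. If all $r_i=\infty$ then for $\nu\neq 0$ these radii tend to $0$, the scalar towers are Mittag-Leffler by ordinary completeness, and $\varprojlim^{1}=0$, so ${\rm Pic}(X_r)=0$. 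If some $r_i<\infty$, the tower for $\nu=e_i$ has radii bounded below by $r_i^{-1}>0$; applying $\varprojlim$, $\varprojlim^{1}$ to $0\to B^{(k)}_{\nu}\to L\to L/B^{(k)}_{\nu}\to 0$ identifies its $\varprojlim^{1}$ with the cokernel of $L\to\varprojlim L/B^{(k)}_{\nu}$, which vanishes for all prescribed offsets precisely when every descending sequence of balls in $L$ has nonempty intersection, i.e.\ when $L$ is spherically complete. This gives both implications: spherical completeness (or $r=(\infty,\dots,\infty)$) forces triviality, while a nested sequence of balls with empty intersection in the $\nu=e_i$ direction produces a nonzero class in $\varprojlim^{1}\mathcal{O}^{\times}(X_k)$, hence a nontrivial line bundle, when some $r_i$ is finite.

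The step I expect to be the main obstacle is the \emph{reassembly} in the spherically complete (and $r=\infty$) case: one must upgrade purely coefficient-wise solvability into an honest solution $u_k$ inside the Banach tower $\mathfrak m(X_k)$, keeping $\|u_k\|_{k}<1$ and $b_\nu\to 0$ so that the series genuinely converges on $X_k$. This requires a uniform, norm-controlled form of spherical completeness (choosing the intersection point of each descending ball sequence with norm no larger than the ball radii), which the ultrametric max-norm makes available but must be arranged compatibly across all $\nu$ and $k$. Coupled with this is the careful bookkeeping of the logarithm in residue characteristic $p$, needed to guarantee that the additive computation faithfully reflects the multiplicative $\varprojlim^{1}$ that computes ${\rm Pic}(X_r)$; once these two technical points are secured, the dichotomy stated in the proposition follows.
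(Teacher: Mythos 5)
A preliminary calibration: the paper does not prove this proposition at all --- it imports it from Gruson \cite{Gru68}, and the remark that follows it in the paper attributes the ``only if'' half to Lazard \cite{Laz62}, who builds, from a pseudo-convergent sequence without pseudo-limit, a divisor on a bounded disc that is not principal. So your proposal must be measured against the classical arguments rather than against anything in this paper. Its skeleton is sound and is indeed the standard one: choosing the exhaustion radii in $|L^{\times}|$ (possible since $r\in(|L|\cup\{\infty\})^n$), each $\so(X_k)$ is a rescaled Tate algebra, hence a UFD, so ${\rm Pic}(X_k)=0$; the Mittag-Leffler sequence for a quasi-Stein exhaustion then gives ${\rm Pic}(X_r)\simeq \R^1\lim_k\so^{\times}(X_k)$; and the decomposition of a unit as $c(1+g)$ with $c\in L^{\times}$, $\|g\|<1$, $g(0)=0$, is compatible with restriction, so the constant tower splits off and contributes nothing. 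Up to this point you are on firm ground.

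The genuine gaps come after that, one in each direction of the dichotomy. \emph{(i) The logarithm step.} The case the paper needs is $L=C$, of residue characteristic $p$. There $\log$ maps the tower of principal units (normalized to have constant term $1$) injectively into the additive tower, but it is far from surjective: $\exp$ converges only on functions of norm $<|p|^{1/(p-1)}$, so for instance $cT_1$ with $|p|^{1/(p-1)}\le |c|<1$ is not the logarithm of any principal unit on the closed unit polydisc. Thus $\log$ realizes the multiplicative tower only as a subtower of the additive one, and the six-term $\lim/\R^1\lim$ sequence picks up terms coming from the quotient towers, which you never control; your parenthetical claim that this ``does not affect whether the first derived limit is zero'' is exactly the assertion that needs proof, so the spherically complete direction is not actually reduced to the additive computation. (For the non-triviality direction there is a repair you do not mention: for $\nu=e_i$ the map ``coefficient of $T_i$'' is already a group homomorphism $1+\mathfrak{m}(X_k)\to (B^{(k)}_{e_i},+)$, because two terms of degree $\geq 1$ can never multiply into degree $1$; this surjection of towers makes $\R^1\lim_k B^{(k)}_{e_i}$ a quotient of the multiplicative $\R^1\lim$, with no logarithm at all.) \emph{(ii) The scale problem}, which you do not flag. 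Failure of spherical completeness hands you a nested sequence of balls with empty intersection whose radii decrease to \emph{some} $u>0$, whereas your coefficient-wise argument needs one whose radii decrease to the specific limit $r_i^{-m}$; equivalently, after rescaling by $|L^{\times}|$, it needs a bad chain whose limiting radius lies in $|L^{\times}|$. Nothing in your proof can move between scales: fattening the given balls to the required radii can fill the hole (if all given radii are $<s$, then by the ultrametric inequality the fattened balls of radii $s_k\downarrow s$ all contain one fixed ball of radius $s$, so their intersection is nonempty), while shrinking them cannot be kept nested inside the hole. This is precisely why Lazard encodes the obstruction as a non-principal \emph{divisor} attached to a pseudo-convergent sequence --- an invariant unchanged by translation and scaling, hence scale-free --- rather than as a class in a fixed coefficient tower. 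Your ``only if'' direction therefore requires either a proof that bad chains exist with limiting radius in $|L^{\times}|$ (this can be done for separable fields like the completion of $\overline{\Q}_p$ by a tree/cardinality argument, but you give none for a general non-spherically-complete $L$), or a switch to the divisor construction. By contrast, the reassembly issue you do flag on the spherically complete side is real but tractable along the lines you indicate: coefficient-wise solutions are merely bounded on $X_k$, hence genuinely convergent on $X_{k-1}$, and shifting a tower does not change $\R^1\lim$.
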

The ``only if'' part was shown  by Lazard \cite[Prop.\,6]{Laz62}: Assume that  $L$ is not  spherically
complete and let $r \in  |L|$; then Lazard constructs a divisor on $X_r$ which is not a principal divisor. This
implies that open discs $X_r$ which are bounded in at least one direction have non-trivial line bundles
(see \cite[p. 87, Rem.\,2]{Gru68}). 
\end{remark}

 We have: 
\begin{lemma}
The canonical map $H^1_v(D^n,U)\to {\rm Pic}_{v}(D^n) $ is surjective.
\end{lemma}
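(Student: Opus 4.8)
The plan is to follow the strategy of Lemma~\ref{surject-U}: I will reduce the surjectivity statement to the vanishing of a single $v$-cohomology group. Feeding the short exact sequence of sheaves $1\to U\to\so^{\times}\to\overline{\so}^{\times}\to 1$ on $D^n_v$ into the long exact cohomology sequence, the cokernel of the map $H^1_v(D^n,U)\to H^1_v(D^n,\so^{\times})={\rm Pic}_v(D^n)$ embeds into $H^1_v(D^n,\overline{\so}^{\times})$. Hence it suffices to prove $H^1_v(D^n,\overline{\so}^{\times})=0$.

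The genuine novelty compared with the torus and the Drinfeld space is that the analytic Picard group ${\rm Pic}_{\rm an}(D^n)$ is now non-trivial, so I cannot invoke its vanishing on $D^n$ itself; instead I will pass to a covering by affinoid polydiscs, on which it does vanish. I would take the standard Stein exhaustion $\{X_k\}_{k\in\N}$ of $D^n$ by closed polydiscs of polyradius $r_k\nearrow 1$, with $X_k\Subset X_{k+1}$ and $\so^{\times}_{\rm an}$ acyclic on each $X_k$, and use (as in~\eqref{kawa11}) the exact sequence
$$0\to\R^1\lim_k H^0_v(X_k,\overline{\so}^{\times})\to H^1_v(D^n,\overline{\so}^{\times})\to\lim_k H^1_v(X_k,\overline{\so}^{\times})\to 0.$$
It then remains to kill both outer terms.

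For the $\lim$-term I would argue exactly as in Lemma~\ref{surject-U}: by Proposition~\ref{topo}(2) it is enough to work \'etale-locally, and the exponential sequence of Lemma~\ref{log} reduces $H^1_{\eet}(X_k,\overline{\so}^{\times})=0$ to the two vanishings $H^2_{\eet}(X_k,\so)=0$ (immediate since $X_k$ is affinoid) and $H^1_{\eet}(X_k,\so^{\times}[\tfrac1p])=0$; the latter holds because ${\rm Pic}_{\rm an}(X_k)={\rm Pic}_{\eet}(X_k)=0$ for a polydisc and $X_k$ is quasi-compact. For the $\R^1\lim$-term I would use $H^0_v(X_k,\overline{\so}^{\times})=H^0_{\rm an}(X_k,\overline{\so}^{\times})[\tfrac1p]$ (by \cite[Lem.\,2.14]{Heu2}) and check that the transition maps are surjective: on a closed polydisc every unit has the shape $c(1+g)$ with $c\in C^{\times}$ and $g$ topologically nilpotent, and since the value group of $C$ is dense one has $1+g\in U(X_k)$, so the class of such a unit in $\overline{\so}^{\times}(X_k)$ is that of the constant $c$; as constants extend to $X_{k+1}$, the restriction maps $\overline{\so}^{\times}(X_{k+1})\to\overline{\so}^{\times}(X_k)$ are surjective and the system is Mittag-Leffler, whence $\R^1\lim_k H^0_v(X_k,\overline{\so}^{\times})=0$.

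The main obstacle is precisely the non-vanishing of ${\rm Pic}_{\rm an}(D^n)$, which rules out a direct imitation of the earlier examples. The point that makes the argument go through is that this non-vanishing is invisible to $\overline{\so}^{\times}$: by the analytic description of ${\rm Pic}_{\rm an}(D^n)$ it is detected by $\R^1\lim_k\so^{\times}(X_k)$ --- the non-extendable divisors of Lazard living in the \emph{full} unit sheaf --- whereas passing to $\overline{\so}^{\times}$ retains only the constant leading coefficient of a unit, which extends freely. Thus $H^1_v(D^n,\overline{\so}^{\times})$ vanishes although ${\rm Pic}_v(D^n)$ and ${\rm Pic}_{\rm an}(D^n)$ do not, and this is exactly what delivers the claimed surjectivity.
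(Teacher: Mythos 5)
Your proposal follows the same skeleton as the paper's proof: reduce the surjectivity to the vanishing of $H^1_v(D^n,\overline{\so}^{\times})$, exhaust $D^n$ by closed polydiscs $X_k$, and kill the two outer terms of the $\lim$/$\R^1\lim$ exact sequence \eqref{kawa1}. The only real difference is how the input on closed polydiscs is obtained. The paper simply quotes \cite[Lem.\,6.6]{Heu21}, which gives at once $H^1_v(X_k,\overline{\so}^{\times})=0$ and $H^0_v(X_k,\overline{\so}^{\times})=C^{\times}/(1+\mathfrak{m}\so_C)$, so that the $H^0$-system is constant with identity transition maps and both outer terms vanish immediately. You instead re-derive these facts by hand. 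Your derivation of $\lim_k H^1_v(X_k,\overline{\so}^{\times})=0$ --- via Proposition~\ref{topo}(2), the exponential sequence of Lemma~\ref{log}, affinoid acyclicity, quasi-compactness, and ${\rm Pic}_{\rm an}(X_k)=0$ for a polydisc --- is exactly the paper's argument for Lemma~\ref{surject-U} transplanted from Junger's affinoids to polydiscs, and it is correct.

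For the $\R^1\lim$-term there is one subtlety you should be aware of. Your description of units in a Tate algebra ($f=c(1+g)$ with $c\in C^{\times}$, $|g|_{\sup}<1$, hence $1+g\in U(X_k)$ by density of the value group) is correct, but it only controls the image of $\so^{\times}(X_k)$ in $H^0(X_k,\overline{\so}^{\times})$. Sections of the quotient \emph{sheaf} $\overline{\so}^{\times}$ need not a priori be classes of global units: from the long exact sequence for $1\to U\to\so^{\times}\to\overline{\so}^{\times}\to 1$, the cokernel of $\so^{\times}(X_k)\to H^0(X_k,\overline{\so}^{\times})$ is the kernel of $H^1(X_k,U)\to H^1(X_k,\so^{\times})$, and the group $H^1(X_k,U)$ is not obviously zero even though ${\rm Pic}_{\rm an}(X_k)=0$ (note that $H^1_v(X_k,U)$ and $H^1_{\proeet}(X_k,\so)\simeq\Omega^1(X_k)(-1)$ are genuinely non-trivial for affinoids). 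So the step ``every class in $H^0_v(X_k,\overline{\so}^{\times})$ is the class of a constant'' is precisely the non-trivial content of \cite[Lem.\,6.6]{Heu21} (equivalently, of the good-reduction results of \cite{Heu2} that you invoke through Lem.\,2.14 there); it is not a consequence of the structure of global units alone. Granting that citation in the way you use it, your proof is complete; it just redistributes where Heuer's closed-ball computation enters, whereas the paper cites it wholesale.
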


\begin{proof}
We will show that $H^1_v(D^n,\overline{\so}^{\times})$ is zero. 
Take $\{ X_k \}_{k \in \N}$ -- a  Stein covering of $D^n$ by closed balls $X_k$. We have the exact sequence
\begin{equation}\label{kawa1}
0\to \R^1\lim_k H^0_v(X_k,\overline{\so}^{\times})\to H^1_v(D^n,\overline{\so}^{\times})\to \lim_kH^1_v(X_k,\overline{\so}^{\times})\to 0
\end{equation}
But, by   \cite[Lem.\,{6.6}]{Heu21}), 
\begin{equation}\label{closedball}
H^0_v(X_k,\overline{\so}^{\times})=C^{\times}/(1+\mathfrak{m}\so_C),\quad H^1_v(X_k,\overline{\so}^{\times})=0. 
\end{equation}
Hence
$$
\R^1\lim_k H^0_v(X_k,\overline{\so}^{\times})=0,\quad \lim_kH^1_v(X_k,\overline{\so}^{\times})=0.
$$
Thus, by \eqref{kawa1}, $H^1_v(D^n,\overline{\so}^{\times})=0$, as wanted.
\end{proof}
Hence  we have exact sequences of non-trivial groups: 
\begin{align*}
 0 \to {\rm Pic}_{\rm an}(D^n) \to {\rm Pic}_{v}(D^n) \to {\rm Im}({\rm HTlog}) \to 0,\\
0\to  \Omega^1(D^n)^{d=0}(-1)\to {\rm Im}({\rm HTlog}) \stackrel{\rm Exp} {\longrightarrow}{\mathcal I}^1(D^n)(-1)\to 0.
\end{align*}
We note that  $\Omega^1(D^n)^{d=0}\stackrel{\sim}{\leftarrow} \so(D^n)/C$. 

  Since $H^1_{\rm an}(D^n,U)\stackrel{\sim}{\to}{\rm Pic}_{\rm an}(D^n)$ and $H^1_{v}(D^n,U)\stackrel{\sim}{\to}{\rm Pic}_{v}(D^n)$ (use Proposition \ref{topo}), Remark \ref{chern0}  yields the exact sequence
$$
 0 \to {\rm Pic}_{\rm an}(D^n) \to {\rm Pic}_{v}(D^n)_0 \to  \Omega^1(D^n)^{d=0}(-1) \to 0.
$$
\subsection{The group $H^2_{\eet}(X, \mathbb{G}_m)$} 
The above computations allow us to deduce a little bit about the structure of the groups $H^2_{\eet}(X, \mathbb{G}_m)$. 

 Let $X$ be a smooth Stein space of dimension $n$ over $C$.
 
 $\bullet$ {\em The case of   $n\geq i\geq 2$.}  
By Corollary \ref{sally1},  we have an exact sequence 
\begin{equation}\label{cold1}
 0 \to {\rm Coker}({\rm HTlog}_{i-1}) \to H^i_{\eet}(X, \so^{\times}) \lomapr{\nu_i^*}   {\rm Ker}({\rm HTlog}_{i}) \to 0. 
\end{equation}
We have ${\rm Coker}({\rm HTlog}_{i-1})=\Omega^{i-1}(X)(-i+1)/{\rm Im}({\rm HTlog}_{i-1})$. Since ${\rm Im}({\rm HTlog}_{i-1})\supset {\rm Im}({\rm HTlog}_{U, i-1})$, we have 
${\rm Coker}({\rm HTlog}_{U,i-1})\twoheadrightarrow{\rm Coker}({\rm HTlog}_{i-1})$. In the case that the inclusion map above  is an isomorphism, by Theorem \ref{image00}, we have an exact sequence
$$
0\to \Omega^{i-1}(X)^{d=0}(-i+1)\to {\rm Im}({\rm HTlog}_{i-1})\to {\mathcal I}^{i-1}(X)\to 0,
$$
which,  in combination with the exact sequence  \eqref{cold1},   yields the exact sequence\footnote{The expression  in the denominator denotes an extension of the two terms.}
\begin{equation}\label{cold2}
 0 \to \Omega^{i-1}(X)(-i+1)/[{\rm Ker}\, d- {\mathcal I}^{i-1}(X)]\to H^i_{\eet}(X, \so^{\times}) \lomapr{\nu^*_i}   {\rm Ker}({\rm HTlog}_{i}) \to 0. 
\end{equation}
 
  $\bullet$ {\em The case of  $n=1$.} By Proposition \ref{ses-leray}  we have
  $$
0= H^2_{\eet}(X,\so^{\times})\stackrel{\sim}{\to} H^2_{v}(X,\so^{\times}).
  $$

 Hence, by Sections \ref{nie1}, \ref{nie2}, and \ref{nie3},  if $X$ is an affine space, a torus, or a Drinfeld space (base changed to $C$), we have the exact sequence
 \begin{equation}\label{basic}
  0 \to \Omega^{1}(X)(-1)/{\rm Ker}\, d\to H^2_{\eet}(X, {\mathbb G}_m) \lomapr{\nu^*_2}   {\rm Ker}({\rm HTlog}_{2}) \to 0. 
\end{equation}

 Moreover, in the case $H^1_{\eet}(X,\overline{\so}^{\times})=0$, we have the injection $${\rm Ker}({\rm HTlog}_{U,2})\hookrightarrow {\rm Ker}({\rm HTlog}_{2}).$$
 Again, this is the case for an affine space, a torus, or a Drinfeld space.

\subsubsection{Comparison with $p$-adic cohomology}  

To get a handle on ${\rm Ker}({\rm HTlog}_{U,i})$, we can 
use the logarithmic exact sequence
\[ 0 \to (\Q_p/\Z_p)(1) \to U \xrightarrow{\rm log} \so \to 0 \]
and Lemma~\ref{vendredi2} to obtain the bottom sequence  in the following  commutative diagram with exact rows: 
\begin{equation}
\label{nov}
\xymatrix{
0 \ar[r] &(\Omega^{i-1}(X)/{\rm Ker}\,d)(1-i) \ar[r]^-{\rm Exp} \ar[d]^{f_1} & H^i_{\proeet}(X, \Q_p(1)) \ar[d]^{f_2} \ar[r] & {\rm HK}^i(X)(1) \ar[d]^{f_3} \ar[r] & 0 \\ 
0 \ar[r] &{\rm Coker}({\rm HTlog}_{U,{i-1}})  \ar[r] &  H^i_{\proeet}(X, \Q_p/\Z_p(1)) \ar[r] & {\rm Ker}({\rm HTlog}_{U,i}) \ar[r] & 0,} \end{equation}  
where we set ${\rm HK}^i(X):=(H^i_{\rm HK}(X)\otimes^{\Box}_{\breve{F}}\wh{\B}^+_{\st})^{N=0,\phi=p^i}$. 
The top row comes from diagram \ref{pStein}.  
The left square commutes by diagram \eqref{vendredi}. 
Hence, if  the map $f_1$  in~\eqref{nov} is an isomorphism the right square in~\eqref{nov} is bicartesian. If we also have isomorphisms ${\rm Coker}({\rm HTlog}_{U,i-1}) \stackrel{\sim}{\to} {\rm Coker}({\rm HTlog}_{i-1})$ and $ {\rm Ker}({\rm HTlog}_{U,i})\stackrel{\sim}{\to}{\rm Ker}({\rm HTlog}_i)$, we can completely determine ${\rm Ker}({\rm HTlog}_i)$ using $p$-adic cohomologies.

\subsubsection{Examples of ${\rm Ker}({\rm HTlog}_{U,i})$} We present here the following computation:
\begin{proposition}\label{Bexamples}
For $i\geq 1$, we have the natural isomorphisms:
\begin{align*}
& {\rm Ker}({\rm HTlog}_{U,i}({\mathbb A}^n_C)) =0,\\
& {\rm Ker}({\rm HTlog}_{U,i}({\mathbb G}^n_{m,C})) \cong \wedge^i(\Q_p/\Z_p)^n,\\
&{\rm Ker}({\rm HTlog}_{U,i}({\mathbb H}^n_{C})) \cong {\rm Sp}_i(\Z_p)^{\vee},
\end{align*}
denoting by $(-)^{\vee}$ the Pontryagin dual.
\end{proposition}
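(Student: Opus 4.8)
The plan is to read off all three kernels from the commutative diagram \eqref{nov}, whose bottom row exhibits ${\rm Ker}({\rm HTlog}_{U,i})$ as the cokernel term of the logarithmic sequence. First I would observe that for each of the three spaces $X\in\{{\bb A}^n_C,\ {\bb G}^n_{m,C},\ {\bb H}^n_C\}$ one has ${\mathcal I}^{j}(X)=0$ for all $j$; this is precisely what is established in Sections \ref{nie1}, \ref{nie2} and \ref{nie3}. By Theorem \ref{image00} this gives ${\rm Im}({\rm HTlog}_{U,i-1})=\Omega^{i-1}(X)^{d=0}(1-i)$, hence ${\rm Coker}({\rm HTlog}_{U,i-1})=(\Omega^{i-1}(X)/{\rm Ker}\,d)(1-i)$, which is exactly the source of the left vertical map $f_1$ in \eqref{nov} (with the evident conventions for $i=1$). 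Therefore $f_1$ is an isomorphism and, as noted after \eqref{nov}, the right-hand square of \eqref{nov} is bicartesian.

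With $f_1$ an isomorphism, the snake lemma applied to the morphism of short exact sequences \eqref{nov} yields natural isomorphisms ${\rm Ker}(f_3)\cong{\rm Ker}(f_2)$ and ${\rm Coker}(f_3)\cong{\rm Coker}(f_2)$, where $f_2=\pi^i$ and $f_3\colon {\rm HK}^i(X)(1)\to{\rm Ker}({\rm HTlog}_{U,i})$. The coefficient sequence $0\to\Z_p(1)\to\Q_p(1)\to\Q_p/\Z_p(1)\to 0$ identifies ${\rm Ker}(\pi^i)={\rm Im}(\iota^i)$ and ${\rm Coker}(\pi^i)\cong{\rm Ker}(\iota^{i+1})$. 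Since the isomorphism ${\rm Ker}(f_2)\xrightarrow{\sim}{\rm Ker}(f_3)$ is induced by the projection $p\colon H^i_{\proeet}(X,\Q_p(1))\twoheadrightarrow{\rm HK}^i(X)(1)$, the subgroup ${\rm Ker}(f_3)\subset{\rm HK}^i(X)(1)$ is the image of the integral cohomology $H^i_{\proeet}(X,\Z_p(1))$ under $p\circ\iota^i$. The four term exact sequence attached to $f_3$,
\[ 0\to{\rm Ker}(f_3)\to{\rm HK}^i(X)(1)\xrightarrow{f_3}{\rm Ker}({\rm HTlog}_{U,i})\to{\rm Coker}(f_3)\to 0, \]
then presents ${\rm Ker}({\rm HTlog}_{U,i})$ as an extension of ${\rm Ker}(\iota^{i+1})$ by the quotient of ${\rm HK}^i(X)(1)$ by the image of integral cohomology.

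It remains to feed in the three inputs. For $X={\bb A}^n_C$ we have ${\rm HK}^i(X)=0$ and $H^{j}_{\proeet}({\bb A}^n_C,\Z_p(1))=0$ for $j\ge 1$ (Section \ref{nie1}), so both outer terms vanish and ${\rm Ker}({\rm HTlog}_{U,i})=0$. For $X={\bb G}^n_{m,C}$ we have ${\rm HK}^i(X)(1)\cong\wedge^i\Q_p^n$, and under the exterior-algebra description of the cohomology of the torus the image of $H^i_{\eet}({\bb G}^n_{m,C},\Z_p(i))\cong\wedge^i\Z_p^n$ is the standard lattice, while ${\rm Ker}(\iota^{i+1})=0$ because $H^{i+1}_{\eet}$ is torsion free; hence ${\rm Ker}({\rm HTlog}_{U,i})\cong\wedge^i\Q_p^n/\wedge^i\Z_p^n$, the group denoted $\wedge^i(\Q_p/\Z_p)^n$ in the statement. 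For $X={\bb H}^n_C$, Proposition \ref{drinfeld1} gives ${\rm HK}^i(X)(1)\cong{\rm Sp}_i(\Q_p)^*$ with the torsion-free lattice $H^i_{\eet}({\bb H}^n_C,\Z_p(i))\cong{\rm Sp}_i(\Z_p)^*$ as integral part, and again ${\rm Ker}(\iota^{i+1})=0$; applying $\Hom({\rm Sp}_i(\Z_p),-)$ to $0\to\Z_p\to\Q_p\to\Q_p/\Z_p\to 0$ together with the vanishing of $\Ext^1({\rm Sp}_i(\Z_p),\Z_p)$ identifies the quotient ${\rm Sp}_i(\Q_p)^*/{\rm Sp}_i(\Z_p)^*$ with the Pontryagin dual ${\rm Sp}_i(\Z_p)^{\vee}$.

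I expect the main obstacle to be the Drinfeld case: one must verify that the integral-to-Hyodo--Kato map for ${\bb H}^n_C$ is injective with image exactly the lattice ${\rm Sp}_i(\Z_p)^*$ (this rests on the compatibility of the projection onto the Hyodo--Kato term with cup and symbol maps, as used in Section \ref{nie3}) and that passing to duals turns the quotient into the Pontryagin dual, which requires the projectivity, or at least the $\Ext^1$-acyclicity, of the compact $\Z_p$-module ${\rm Sp}_i(\Z_p)$. A secondary, purely bookkeeping, point is to track the Tate twists on the Hyodo--Kato term throughout, which however do not affect the isomorphism classes over $C$.
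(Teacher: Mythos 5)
Your overall strategy coincides with the paper's own proof: both arguments read the three kernels off diagram \eqref{nov}, use the vanishing of $\mathcal{I}^{j}(X)$ from Sections \ref{nie1}, \ref{nie2}, \ref{nie3} to make $f_1$ an isomorphism, apply the snake lemma to identify ${\rm Ker}(f_3)$ with the image of integral cohomology in the Hyodo-Kato term and ${\rm Coker}(f_3)$ with ${\rm Ker}(\iota^{i+1})$, and then feed in the same case-by-case computations (your $\Ext^1$-vanishing remark for the passage ${\rm Sp}_i(\Q_p)^*/{\rm Sp}_i(\Z_p)^*\simeq {\rm Sp}_i(\Z_p)^{\vee}$ is, if anything, more careful than the paper, which asserts this isomorphism without comment). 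However, one step fails as written: for the torus and the Drinfeld space you conclude ${\rm Ker}(\iota^{i+1})=0$ \emph{because} $H^{i+1}_{\eet}$ is torsion free. Torsion-freeness of $H^{i+1}_{\proeet}(X,\Z_p(1))$ only kills ${\rm Ker}(\iota^{i+1})$ if one knows this kernel consists of torsion elements. That is automatic for quasi-compact spaces, where cohomology commutes with the filtered colimit $\Q_p=\colim(\Z_p\xrightarrow{\ p\ }\Z_p\to\cdots)$ and hence rational cohomology is integral cohomology with $p$ inverted; but the spaces here are Stein, hence not quasi-compact. Their pro-\'etale cohomology is computed through $\R\lim$ over an exhaustion, which does not commute with $-\otimes_{\Z_p}\Q_p$, and ${\rm Ker}(\iota^{i+1})$ is the image of the Bockstein $H^i_{\proeet}(X,\Q_p/\Z_p(1))\to H^{i+1}_{\proeet}(X,\Z_p(1))$, whose source need not be a torsion group in this setting (limits and $\R^1\lim$ of torsion groups can contain elements of infinite order). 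So torsion-freeness alone does not close this step.

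The repair is immediate, and it is exactly the paper's route: the integral-to-Hyodo-Kato map
\[
H^{i+1}_{\proeet}(X,\Z_p(i+1))\to (H^{i+1}_{\rm HK}(X)\otimes^{\Box}_{\breve{F}}\wh{\B}^+_{\st})^{N=0,\phi=p^{i+1}}
\]
factors through $\iota^{i+1}$, so its injectivity forces $\iota^{i+1}$ to be injective, i.e. ${\rm Coker}(f_2)={\rm Ker}(\iota^{i+1})=0$ (equivalently, $f_2$ and hence $f_3$ are surjective, which is how the paper phrases it). This injectivity, in all degrees, is precisely what Sections \ref{nie2} and \ref{nie3} establish via the compatibility with symbols and Proposition \ref{drinfeld1}, and you already invoke it to identify the lattices $\wedge^i\Z_p^n$ and ${\rm Sp}_i(\Z_p)^*$; you only need to apply it in degree $i+1$ as well. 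With that substitution your argument agrees with the paper's proof.
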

\begin{proof}
Let us start with the affine space $X={\mathbb A}^n_C$. Since $H^i_{\proeet}({\bb A}^n_C, \Z_p)=0$, for $i\geq 1$,  the canonical map $H^i_{\proeet}({\bb A}^n_C, \Q_p)\to H^i_{\proeet}({\bb A}^n_C, \Q_p/\Z_p)$ is an isomorphism. 
Since  ${\rm HK}^i({\bb A}^n_C)=0$, from diagram \eqref{nov}, we obtain indeed that
${\rm Ker}({\rm HTlog}_{U,i}) =0$. 

  In the case of the torus $X={\mathbb G}^n_{m,C}$, we know from section \S\ref{nie2} that the map $H^i_{\proeet}({\bb G}^n_{m,C}, \Z_p)\to {\rm HK}^i({\bb G}^n_{m,C})$ is injective. This implies that  the map $f_2$ in diagram \eqref{nov} is surjective and so is the map $f_3$. It follows that
  $$
  {\rm Ker}({\rm HTlog}_{U,i}({\mathbb G}^n_{m,C})) \simeq {\rm HK}^i({\bb G}^n_{m,C})/H^i_{\proeet}({\bb G}^n_{m,C}, \Z_p)\simeq \wedge^i(\Q_p/\Z_p)^n,
  $$ as wanted.  

  Finally for the Drinfeld space $X={\mathbb H}^n_{C}$, the argument is analogous to the one in the case of the torus (using Proposition~\ref{drinfeld1}) and we get
  $$
    {\rm Ker}({\rm HTlog}_{U,i}({\mathbb H}^n_{C})) \simeq {\rm HK}^i({\bb H}^n_{C})/H^i_{\proeet}({\bb H}^n_C, \Z_p)\simeq {\rm Sp}_i(\Q_p)^*/{\rm Sp}_i(\Z_p)^*\simeq {\rm Sp}_i(\Z_p)^{\vee},
  $$
  as claimed.
\end{proof}

\subsubsection{The affine space ${\mathbb A}^n_C$}
In the case of the affine space, the above  will allow us  to compute the group $H^i_{\eet}({\bb A}_C^n, {\bb G}_m)$ in any degree $i$. To do that, we start with  the following result, whose proof was suggested to us by Ben Heuer: 

\begin{proposition} \label{o-bar-aff}
Let $n$ be an integer. For any $i>0$, the cohomology groups 
$$H_v^i({\bb A}_C^n,\overline{\so}^\times),\quad H_{\proeet}^i({\bb A}_C^n,\overline{\so}^\times),\quad H_{\eet}^i({\bb A}_C^n,\overline{\so}^\times)
$$ are trivial.\end{proposition}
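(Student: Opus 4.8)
The plan is to reduce the statement to a single topology and then to a computation on the closed balls exhausting the affine space. First I would invoke Proposition~\ref{topo}(3): the identities $\R\nu_*\overline{\so}^{\times}=\overline{\so}^{\times}$ and $\R\mu_*\overline{\so}^{\times}=\overline{\so}^{\times}$ yield canonical isomorphisms
$$
H^i_{\eet}({\bb A}^n_C,\overline{\so}^{\times})\simeq H^i_{\proeet}({\bb A}^n_C,\overline{\so}^{\times})\simeq H^i_v({\bb A}^n_C,\overline{\so}^{\times})
$$
for every $i$. Hence it suffices to prove the vanishing for one of the three, and I would work with the $v$-topology.

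Next I would use a Stein exhaustion. Write ${\bb A}^n_C=\bigcup_{k\in\N}X_k$, where $X_k$ is the closed polydisc of radius $p^k$, so that $X_k\Subset X_{k+1}$ and each $X_k$ is a closed ball. Exactly as in the proofs of Lemma~\ref{surject-U} and of the open-disc case, the presentation $\R\Gamma_v({\bb A}^n_C,\overline{\so}^{\times})\simeq \R\lim_k\R\Gamma_v(X_k,\overline{\so}^{\times})$ gives, for each $i\ge 1$, a short exact sequence
$$
0\to \R^1\lim_k H^{i-1}_v(X_k,\overline{\so}^{\times})\to H^i_v({\bb A}^n_C,\overline{\so}^{\times})\to \lim_k H^i_v(X_k,\overline{\so}^{\times})\to 0.
$$

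The key input is the computation on a single closed ball: by \cite[Lem.\,6.6]{Heu21} (the degrees $0$ and $1$ of which are recorded in \eqref{closedball}) the complex $\R\Gamma_v(X_k,\overline{\so}^{\times})$ is concentrated in degree zero, with $H^0_v(X_k,\overline{\so}^{\times})=C^{\times}/(1+\mathfrak{m}\so_C)$ and $H^i_v(X_k,\overline{\so}^{\times})=0$ for $i\ge1$. Feeding this into the sequence above: for $i\ge 2$ both outer terms vanish since $H^{i-1}_v(X_k,\overline{\so}^{\times})=0=H^i_v(X_k,\overline{\so}^{\times})$, whereas for $i=1$ the term $\lim_k H^1_v(X_k,\overline{\so}^{\times})$ vanishes and the transition maps of the constant system $H^0_v(X_k,\overline{\so}^{\times})=C^{\times}/(1+\mathfrak{m}\so_C)$ are the identity, so the system is Mittag--Leffler and $\R^1\lim_k H^0_v(X_k,\overline{\so}^{\times})=0$. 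Thus $H^i_v({\bb A}^n_C,\overline{\so}^{\times})=0$ for all $i\ge1$, and the first step transports this to the pro-\'etale and \'etale topologies.

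I expect the only genuinely delicate point to be the justification of the $\R\lim$-presentation $\R\Gamma_v({\bb A}^n_C,\overline{\so}^{\times})\simeq\R\lim_k\R\Gamma_v(X_k,\overline{\so}^{\times})$ for the chosen exhaustion, that is, that $v$-cohomology of $\overline{\so}^{\times}$ commutes with this countable limit of quasi-compact pieces; this is the same mechanism already relied upon elsewhere in the paper. Granting the closed-ball computation of \cite{Heu21}, everything else is formal, the essential point being that on a ball the sheaf $\overline{\so}^{\times}$ has no higher $v$-cohomology and only ``constant'' global sections.
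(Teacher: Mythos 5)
Your overall architecture --- reducing the three topologies to one via Proposition~\ref{topo}(3), exhausting ${\bb A}^n_C$ by closed polydiscs $X_k$, and running the $\R^1\lim/\lim$ exact sequence --- is exactly the skeleton of the paper's argument, and those steps are sound; the $\R\lim$-presentation you flag as the delicate point is in fact the routine part, used verbatim in \eqref{kawa11} and \eqref{kawa1}. The genuine gap is in what you call the key input: you assert that \cite[Lem.\,6.6]{Heu21} shows that $\R\Gamma_v(X_k,\overline{\so}^{\times})$ is concentrated in degree zero, i.e.\ that $H^i_v(X_k,\overline{\so}^{\times})=0$ for \emph{all} $i\ge 1$ on a closed ball. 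That lemma only computes degrees $0$ and $1$ --- this is precisely what \eqref{closedball} records --- and the vanishing in degrees $i\ge 2$ on closed balls is not available in the reference you cite. It is exactly the content of the paper's Lemma~\ref{Ben1}, and proving it is the real work behind the proposition. Without it, your short exact sequence yields nothing for $i\ge 2$: the term $\lim_k H^i_v(X_k,\overline{\so}^{\times})$ (already for $i=2$) and the term $\R^1\lim_k H^{i-1}_v(X_k,\overline{\so}^{\times})$ (for $i\ge 3$) are not known to vanish. Your $i=1$ case, by contrast, is fine, since there the outer terms only involve degrees $0$ and $1$ on the balls.

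The missing ingredient requires an argument of a different nature from anything in your proposal. The paper proves Lemma~\ref{Ben1} by induction on the dimension of the closed ball: the base case $B^1$ combines Berkovich's vanishing $H^j_{\eet}(B^1,\so^{\times})=0$ for $j\ge 1$ with the exponential sequence \eqref{even1}; the induction step considers the projection $\pi\colon B^{n+1}\to B^n$ and shows $\R^j\pi_*\overline{\so}^{\times}=0$ for $j>0$ by computing stalks at geometric points $\Spa(L,L^+)$, which in turn requires passing to affinoid perfectoid covers via a \v{C}ech argument as in \cite[Lem.\,4.10]{BHP} and invoking the continuity statement $H^j_{\eet}(Y,\overline{\so}^{\times})\simeq\colim_s H^j_{\eet}(Y_s,\overline{\so}^{\times})$ for $Y\sim\lim_s Y_s$ from \cite[Prop.\,3.2]{BHD}. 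So the closed-ball vanishing in higher degrees is a theorem that must be proved, not a citation; as written, your proposal assumes away the hard part.
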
 

\begin{proof} 
The case $i=1$ was already mentioned earlier and is treated in~\cite[Lem.\,6.7]{Heu21}. For $i >1$, the result follows immediately from  Lemma \ref{Ben1} below.
\end{proof}
\begin{lemma} \label{Ben1}For $n>0$, let $B^n$ denote the closed unit disc of dimension $n$ over $C$. The cohomology groups 
$$H_{v}^i(B^n, \overline{\so}^\times),\quad H_{\proeet}^i(B^n, \overline{\so}^\times),\quad H_{\eet}^i(B^n, \overline{\so}^\times)
$$ are trivial for  $i>0$. \end{lemma}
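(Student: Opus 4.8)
The plan is to reduce everything to a statement about the \'etale cohomology of $\mathbb{G}_m=\so^{\times}$ on the affinoid $B^n$, and then to exploit that $B^n$ is cohomologically as simple as possible. First I would use Proposition~\ref{topo}(2)--(3): since $\R\nu_*\overline{\so}^{\times}=\overline{\so}^{\times}$, the three groups $H^i_v$, $H^i_{\proeet}$, $H^i_{\eet}$ of $\overline{\so}^{\times}$ coincide, so it suffices to prove $H^i_{\eet}(B^n,\overline{\so}^{\times})=0$ for $i>0$. Next I would feed the exponential exact sequence $1\to\so\xrightarrow{\exp}\so^{\times}[\tfrac1p]\to\overline{\so}^{\times}\to1$ of Lemma~\ref{log} into its long exact sequence. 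As $B^n$ is affinoid and $\so$ is coherent, $H^i_{\eet}(B^n,\so)=0$ for all $i>0$ (Tate acyclicity), so the boundary maps force isomorphisms $H^i_{\eet}(B^n,\overline{\so}^{\times})\cong H^i_{\eet}(B^n,\so^{\times}[\tfrac1p])$ for every $i\ge1$; and since $B^n$ is quasi-compact the colimit defining $(-)[\tfrac1p]$ commutes with cohomology, giving $H^i_{\eet}(B^n,\so^{\times}[\tfrac1p])\cong\varinjlim H^i_{\eet}(B^n,\so^{\times})$ along the $p$-th power maps. For $i=1$ this already vanishes, because $H^1_{\eet}(B^n,\so^{\times})={\rm Pic}_{\eet}(B^n)={\rm Pic}_{\rm an}(B^n)=0$ (the Tate algebra $C\langle T_1,\dots,T_n\rangle$ is a regular UFD), which recovers \cite[Lem.\,6.6]{Heu21}.

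The remaining, and main, task is to show $H^i_{\eet}(B^n,\so^{\times})$ vanishes after inverting $p$ for $i\ge2$; I would in fact prove the stronger statement $H^i_{\eet}(B^n,\mathbb{G}_m)=0$ for $i\ge2$. The tool is Kummer theory: the sequences $1\to\mu_{p^m}\to\mathbb{G}_m\xrightarrow{p^m}\mathbb{G}_m\to1$ relate the $p$-power map on $H^i_{\eet}(B^n,\mathbb{G}_m)$ to the finite-coefficient groups $H^j_{\eet}(B^n,\mu_{p^m})$, and likewise for primes $\ell\neq p$ (where $\mu_{\ell^m}\cong\mathbb{Z}/\ell^m$ over the algebraically closed field $C$). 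I would then invoke the cohomological triviality of the closed ball over $C$, namely $H^j_{\eet}(B^n,\mu_{p^m})=0$ and $H^j_{\eet}(B^n,\mathbb{Z}/\ell^m)=0$ for all $j>0$, so that both the kernel and cokernel of every prime-power map on $H^i_{\eet}(B^n,\mathbb{G}_m)$ vanish for $i\ge1$; equivalently, each of these groups is uniquely divisible. Vanishing then follows by combining this with the torsion-ness of the higher cohomology of $\mathbb{G}_m$ (the rigid-analytic analogue of Grothendieck's theorem that $H^{\ge2}(\mathbb{G}_m)$ is torsion), since a uniquely divisible torsion group is zero. Alternatively, one can obtain the vanishing directly by passing through the reduction map $B^n\to\mathbb{A}^n_{\bar k}$ to the special fibre and using that $H^i_{\eet}(\mathbb{A}^n_{\bar k},\mathbb{G}_m)=0$ for $i\ge1$, the residue field $\bar k$ of $C$ being algebraically closed.

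The hard part is exactly this last step for $i\ge2$: the Kummer argument by itself only yields that $H^i_{\eet}(B^n,\mathbb{G}_m)$ is uniquely divisible, not that it vanishes, so one genuinely needs a second, independent input --- either the torsion-ness of higher $\mathbb{G}_m$-cohomology or a clean comparison with the $\mathbb{G}_m$-cohomology of the special fibre $\mathbb{A}^n_{\bar k}$ --- to close the gap. I would also take care to justify the cohomological triviality inputs $H^{>0}_{\eet}(B^n,\mu_{p^m})=0$ in the $p$-adic case, which for the closed ball over $C$ can be extracted from a perfectoid cover or from the primitive comparison theorem. Everything else (the case $i=1$ and the reductions to \'etale cohomology and to $\mathbb{G}_m$) is formal; the genuine content is concentrated in the vanishing of the higher $\mathbb{G}_m$-cohomology of the ball.
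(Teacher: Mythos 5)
Your formal reductions are fine and agree with the paper's setup: Proposition~\ref{topo} lets you work with the \'etale topology, the exponential sequence of Lemma~\ref{log} together with $H^{>0}_{\eet}(B^n,\so)=0$ gives $H^i_{\eet}(B^n,\overline{\so}^\times)\simeq H^i_{\eet}(B^n,\so^\times)\otimes_{\Z}\Z[\tfrac1p]$ for $i\ge1$, and the case $i=1$ follows from ${\rm Pic}(B^n)=0$. The gap is in the main step, and it is not a technicality: your key input is false. For the closed ball over $C$ one does \emph{not} have $H^{j}_{\eet}(B^n,\mu_{p^m})=0$ for $j>0$. Already $H^1_{\eet}(B^1,\mu_p)\neq 0$: by the Kummer sequence this group contains $\so^\times(B^1)/(\so^\times(B^1))^p$, and a unit $1+aT\in C\langle T\rangle$ with $|p|^{p/(p-1)}<|a|<1$ is not a $p$-th power. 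Indeed, if $f^p=1+aT$ then $|f|=1$ and $\bar f^{\,p}=1$ in $\overline{k}[T]$ forces $f=1+h$ with $r:=|h|<1$; comparing Gauss norms in $\sum_{j\ge1}\binom{p}{j}h^j=aT$, either $r^{p-1}\le|p|$, which gives $|a|\le|p|\cdot|p|^{1/(p-1)}=|p|^{p/(p-1)}$, a contradiction, or $r^{p-1}>|p|$, in which case the term $h^p$ strictly dominates and passing to the graded (Gauss-norm) reduction, a ring of characteristic $p$, yields $\tilde h^p=\tilde a\tilde T$, impossible because the left-hand side involves only $T$-degrees divisible by $p$. This is the well-known phenomenon that the closed ball has an abundance of $\Z/p$-covers (it sees the Artin--Schreier covers of its reduction ${\bb A}^n_{\overline k}$). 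Your proposed justifications cannot rescue this: the primitive comparison theorem applies to \emph{proper} spaces, not to the affinoid $B^n$, and a perfectoid cover computes pro-\'etale cohomology, which for the torsion sheaf $\mu_p$ agrees with the (nonzero) \'etale cohomology. Only the prime-to-$p$ half of your Kummer argument is correct.

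Your two fallback inputs fail as well. There is no ``rigid-analytic Grothendieck theorem'' asserting that $H^i_{\eet}(X,{\bb G}_m)$ is torsion for $i\ge2$: the paper's own Corollary~\ref{may1} exhibits $H^2_{\eet}({\bb A}^n_C,{\bb G}_m)\simeq\Omega^2({\bb A}^n_C)^{d=0}(-1)$, a nonzero $C$-vector space, for $n\ge2$; and for $B^n$ itself the statement ``$H^i_{\eet}(B^n,{\bb G}_m)$ is $p$-primary torsion for $i\ge1$'' is, after your reductions, precisely equivalent to the lemma, so invoking it is circular. Similarly, there is no clean comparison of ${\bb G}_m$-cohomology between $B^n$ and the special fibre ${\bb A}^n_{\overline k}$: Huber's affinoid comparison concerns torsion coefficients, and at $p$ it points in the opposite direction (Artin--Schreier classes of ${\bb A}^n_{\overline k}$ produce the nonzero $\mu_p$-classes above). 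This is exactly why the paper argues differently: it inducts on the dimension $n$, with base case Berkovich's unconditional vanishing $H^j_{\eet}(B^1,\so^\times)=0$ for $j\ge1$ \cite{Ber}, and in the inductive step it computes the stalks of $\R^j\pi_*\overline{\so}^\times$ for the projection $\pi:B^{n+1}\to B^n$ at geometric points, using a \v{C}ech argument as in \cite{BHP} and the continuity of $\overline{\so}^\times$-cohomology along cofiltered limits \cite{BHD} to reduce to relative dimension one. The point you are missing is that $\overline{\so}^\times$ itself --- not ${\bb G}_m$ and not $\mu_{p^m}$ --- is the sheaf with good base-change behaviour here, and the paper's induction works with it directly rather than trying to deduce the statement from divisibility of ${\bb G}_m$-cohomology.
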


\begin{proof}

For $i=1$, the result was proved in~\cite[Lem.\,{6.6}]{Heu21}. Assume $i \ge 2$.  Using~\eqref{even2}, it is enough to prove that $H_{\eet}^i(B^n, \overline{\so}^\times)=0$. We proceed by induction on the dimension $n$ of the closed disc. If $n=1$, we know that $H^j_{\eet}(B^1, \so^{\times})$ is trivial for $j\geq 1$ (see \cite[Lemma 6.1.2]{Ber}). Since $B^1$ is affinoid we also have $H^j_{\eet}(B^1, \so)=0$, for $j\geq 1$,  hence from the exponential exact sequence~\eqref{even1}, we obtain that $H^i_{\eet}(B^1, \overline{\so}^{\times})$ is trivial for $i\geq 2$.

Assume that the result holds for an integer $n$. Consider the projection $\pi : B^{n+1} \to B^n$. We have $\pi_{*} \overline{\so}^\times =  \overline{\so}^\times$. Using the induction hypothesis, it suffices to show that $\FS:=\R^j\pi_*\overline{\so}^\times$ is trivial for $j>0$. 
 Let $x:=\Spa(L, L^+)$ be a geometric point of $B^{n}$. It is enough to show that the stalk $\FS_x$ of $\FS$ at $x$ is trivial.  We claim  that we have: 
\[ \FS_x \simeq  H^j_{\eet}(B^1 \times \Spa(L,L^+),\overline{\so}^\times). \]
Arguing as in the proof of \cite[Lem.\,4.10]{BHP}, we can pass, via a \v{C}ech argument, to an affinoid perfectoid space $Y$ over $C$  and reduce to proving that if $Y\sim \lim_sY_s$, where $\{Y_s\}$
 is a cofiltered inverse system of smooth rigid spaces over $C$,  then
$H^j_{\eet}(Y,\overline{\so}^\times)\simeq \colim_sH^j_{\eet}(Y_s,\overline{\so}^\times)$. But this was proved in \cite[Prop.\,3.2]{BHD}.

 Now,  the cohomology $H^j_{\eet}(B^1 \times \Spa(L,L^+),\overline{\so}^\times)$ is isomorphic to $H^j_{\eet}(B^1 \times \Spa(L,L^{\circ}),\overline{\so}^\times)$, which is trivial  for $j>0$ by {induction with base case given in}  \cite[Lem.\,{6.6}]{Heu21}. This  concludes the proof of the lemma.  
\end{proof}

\begin{corollary}\label{may1}
Let $n$ be an integer. We have the isomorphisms
\begin{enumerate}
\item $\nu^*_0:  H^{0}_{\eet}({\bb A}_C^n,{\bb G}_m)\stackrel{\sim}{\to} H^{0}_v({\bb A}_C^n,{\bb G}_m)$.
\item 
$
H_v^i({\bb A}_C^n,  {\bb G}_m) \simeq \Omega^{i}({\bb A}_C^n)^{d=0}(-i), \quad i\geq 1.
$
\item 
$$
H_{\eet}^i({\bb A}_C^n,  {\bb G}_m)\simeq \begin{cases}
0 & \mbox{ if  }  \,i=1, i\geq n+1;\\
 \Omega^{i}({\bb A}_C^n)^{d=0}(-i+1) & \mbox{ if  } \,2\leq i\leq n.
\end{cases}
$$
\end{enumerate}
Moreover, the   map $\nu^*_i:  H^{i}_{\eet}({\bb A}_C^n,{\bb G}_m){\to} H^{i}_v({\bb A}_C^n,{\bb G}_m)$ is  zero for $i\geq 1$. 
\end{corollary}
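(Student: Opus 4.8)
The plan is to bootstrap from the already-computed behaviour of the Hodge-Tate logarithm on principal units, transferring information from $U$ to the full sheaf $\so^\times$ by means of the vanishing of the cohomology of $\overline{\so}^\times$, and then to read off the \'etale groups from the collapsed Leray spectral sequence via Corollary~\ref{sally1}. Claim (1) requires no work: it is precisely the isomorphism $H^0_{\eet}(X,\so^\times)\simeq H^0_v(X,\so^\times)$ of Proposition~\ref{ses-leray}(1), specialised to $X={\bb A}^n_C$.

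For claim (2) the first step is to pass from $\so^\times$ to $U$. By Proposition~\ref{o-bar-aff} one has $H^j_v({\bb A}^n_C,\overline{\so}^\times)=0$ for all $j>0$, so the long exact sequence attached to $1\to U\to\so^\times\to\overline{\so}^\times\to1$ yields isomorphisms $H^i_v({\bb A}^n_C,U)\xrightarrow{\sim}H^i_v({\bb A}^n_C,\so^\times)$ for $i\ge2$, compatibly with the two logarithms. Next, Proposition~\ref{Bexamples} says that ${\rm HTlog}_{U,i}$ is injective, while Section~\ref{nie1} (using $\mathcal{I}^i({\bb A}^n_C)=0$) identifies its image with $\Omega^i({\bb A}^n_C)^{d=0}(-i)$; hence $H^i_v({\bb A}^n_C,U)\simeq\Omega^i({\bb A}^n_C)^{d=0}(-i)$. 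Combining these two facts gives $H^i_v({\bb A}^n_C,{\bb G}_m)\simeq\Omega^i({\bb A}^n_C)^{d=0}(-i)$ for $i\ge2$, the case $i=1$ being the isomorphism ${\rm HTlog}\colon{\rm Pic}_v({\bb A}^n_C)\xrightarrow{\sim}\Omega^1({\bb A}^n_C)^{d=0}(-1)$ already established in Section~\ref{nie1} (both sides vanishing when $i>n$). Crucially, the injectivity just invoked, transported through the compatibility of ${\rm HTlog}_{U,i}$ with ${\rm HTlog}_i$, shows that ${\rm Ker}({\rm HTlog}_i)=0$ for all $i\ge2$.

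For claim (3), in the range $2\le i\le n$ I would feed the previous step into the exact sequence of Corollary~\ref{sally1},
$$0\to{\rm Coker}({\rm HTlog}_{i-1})\to H^i_{\eet}({\bb A}^n_C,\so^\times)\xrightarrow{\nu^*_i}{\rm Ker}({\rm HTlog}_i)\to0.$$
Since ${\rm Ker}({\rm HTlog}_i)=0$, this at once exhibits $H^i_{\eet}$ as ${\rm Coker}({\rm HTlog}_{i-1})$ and proves $\nu^*_i=0$. It then remains to identify the cokernel: ${\rm Coker}({\rm HTlog}_{i-1})=(\Omega^{i-1}({\bb A}^n_C)/{\rm Ker}\,d)(-i+1)$, and because $H^{j}_{\dr}({\bb A}^n_C)=0$ for $j\ge1$ the de Rham differential provides an isomorphism $(\Omega^{i-1}/{\rm Ker}\,d)(-i+1)\xrightarrow{\sim}\Omega^i({\bb A}^n_C)^{d=0}(-i+1)$, which is the asserted value. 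The case $i=1$ is ${\rm Pic}_{\eet}({\bb A}^n_C)={\rm Pic}_{\rm an}({\bb A}^n_C)=0$, so $\nu^*_1=0$ trivially; for $i\ge n+1$, Proposition~\ref{ses-leray}(1) gives $H^i_{\eet}\simeq H^i_v$, which vanishes by claim (2), whence $H^i_{\eet}=0$ and $\nu^*_i$ is a map between trivial groups.

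The single point demanding genuine care — and the main obstacle — is the bookkeeping that delivers ${\rm Ker}({\rm HTlog}_i)=0$ for $i\ge2$: one must verify that the comparison isomorphism $H^i_v(U)\simeq H^i_v(\so^\times)$ intertwines ${\rm HTlog}_{U,i}$ with ${\rm HTlog}_i$, so that the injectivity furnished by Proposition~\ref{Bexamples} genuinely transfers, and one must track the Tate twists correctly through the cokernel identification. Granting those identifications, the corollary is a formal assembly of Proposition~\ref{ses-leray}, Corollary~\ref{sally1}, Proposition~\ref{o-bar-aff}, Proposition~\ref{Bexamples}, and the exactness of the de Rham complex of ${\bb A}^n_C$.
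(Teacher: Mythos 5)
Your proposal is correct and follows essentially the same route as the paper's own proof: pass from $\so^{\times}$ to $U$ via the vanishing of the $\overline{\so}^{\times}$-cohomology (Proposition~\ref{o-bar-aff}), combine the injectivity of ${\rm HTlog}_{U,i}$ from Proposition~\ref{Bexamples} with the image computation ${\rm Im}({\rm HTlog}_{U,i})\simeq\Omega^i({\bb A}^n_C)^{d=0}(-i)$ of Section~\ref{nie1}/Corollary~\ref{morn12}, and then read off the \'etale groups from Corollary~\ref{sally1} together with Proposition~\ref{ses-leray} and the exactness of the de Rham complex. The only cosmetic difference is that you treat $i=1$ separately rather than asserting the isomorphisms $H^i_{\tau}(U)\simeq H^i_{\tau}(\so^{\times})$ in all degrees and topologies at once, which changes nothing of substance.
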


\begin{proof} We note that Proposition \ref{o-bar-aff}  implies that the canonical maps
$$
H_{\tau}^i({\bb A}_C^n,  U)\to H_{\tau}^i({\bb A}_C^n,  \so^{\times}),\quad \tau\in\{\eet,\proeet, v\},
$$
are isomorphisms. Hence the $v$-cohomology case follows from  Proposition \ref{Bexamples} and Corollary \ref{morn12}.

For the \'etale cohomology, following Proposition \ref{ses-leray}, we consider two cases. 
If $i\geq n+1$, then we have $\nu^*_i: H^i_{\eet}({\bb A}_C^n,\so^{\times})\stackrel{\sim}{\to} H^i_v({\bb A}_C^n,\so^{\times})=0$, as wanted. If $n\geq i\geq 2$, then we have the exact sequence \eqref{cold2}, which together with Proposition \ref{Bexamples} yields our claim. By Proposition \ref{ses-leray}), 
the map $\nu_i^*: H^{n+1}_{\eet}({\bb A}_C^n,\so^{\times}){\to} H^{n+1}_v({\bb A}_C^n,\so^{\times})$ is the zero map.
\end{proof}
\begin{remark} (1) We could replace $v$-topology   by pro-\'etale topology  in the computations in the statement of Corollary \ref{may1}. This is because the canonical map 
$H_{\proeet}^i({\bb A}_C^n,  {\bb G}_m)\stackrel{\sim}{\to}H_{v}^i({\bb A}_C^n,  {\bb G}_m)$, $i\geq 0$,  is an isomorphism (by the spectral sequence \eqref{leray_nu} and its pro-\'etale analogue, Lemma \ref{log}, and Proposition \ref{omega}). 

 (2) For comparison, we note that over the complex numbers, the analogue of the \'etale part of Corollary \ref{may1} fails: the relevant groups are all trivial (use the exponential sequence and the fact that the affine space is Stein).
\end{remark}

\end{document}